\newtheorem{theorem}{Theorem}[section]
\newtheorem{lemma}[theorem]{Lemma}
\newtheorem{proposition}[theorem]{Proposition}
\newtheorem{corollary}[theorem]{Corollary}
\newtheorem{definition}[theorem]{Definition}
\newtheorem*{theorem*}{Theorem}
\theoremstyle{remark}
\numberwithin{equation}{section}
\begin{document}
\title[$L^p$ estimates for a singular integral operator]{$L^p$ estimates for a singular integral operator motivated by Calder\'{o}n's second commutator}

\author{Eyvindur Ari Palsson}

\address{Department of Mathematics \\ Cornell University \\ Ithaca, NY 14850 \\ USA}
\email{eap48@cornell.edu}


\date{\today}

\subjclass[2010]{Primary 42B20, 47H60; Secondary 45P05}

\keywords{Fourier analysis, multilinear operators}

\begin{abstract}

We prove a wide range of $L^p$ estimates for a trilinear singular integral operator motivated by dropping one average in Calder\'{o}n's second commutator. For comparison by dropping two averages in Calder\'{o}n's second commutator one faces the trilinear Hilbert transform. The novelty in this paper is that in order to avoid difficulty of the level of the trilinear Hilbert transform, we choose to view the symbol of the operator as a non-standard symbol. The methods used come from time-frequency analysis but must be adapted to the fact that our symbol is non-standard.

\end{abstract}

\maketitle

\section{Introduction}

\subsection{History}

The $k$-th Calder\'{o}n commutator, $k\in\lbrace 1,2,3,\ldots\rbrace$, is given by
$$ \mathcal{C}_{A}^{(k)}f(x) = p.v. \int_{\mathbb{R}}\frac{1}{x-y}\left(\frac{A(x)-A(y)}{x-y} \right)^{k}f(y)dy $$
where $A$ is Lipschitz and $A' \in L^{\infty}(\mathbb{R})$. Calder\'{o}n studied these operators in connection with an algebra of pseudo-differential operators. He was also motivated by possible applications to operators of the type

\begin{equation}\label{CauchyIntegral}
\ p.v.\int\limits_{\mathbb{R}}\frac{1}{x-y}\ F\left(\frac{A(x)-A(y)}{x-y}\right) f(y)\ dy
\end{equation}

\noindent where $F$ is an analytic function. The Cauchy integral on Lipschitz curves and double layer potentials are examples of the previous operator. In 1965 Calder\'{o}n showed

\begin{equation*}
\mathcal{C}_{A}^{(k)} : L^p \rightarrow L^p \text{ for } 1<p<\infty
\end{equation*}

\noindent for $k=1$ \cite{C}. Coifman and Meyer extended his result in 1975 to $k=2,3,\ldots$ \cite{CM1}. The estimates obtained did not clearly indicate how the boundedness constant depended on $k$. Building on the work of Coifman and Meyer, Calder\'{o}n was able to prove the above estimates with a boundedness constant that depended on $k$ exponentially. This way he was able to prove bounds for operators of the type (\ref{CauchyIntegral}), as long as the Lipschitz constant was small. Finally, in 1982 Coifman, McIntosh and Meyer showed the above estimates with a boundedness constant that depended on $k$ polynomially \cite{CMM1} and were thus able to show a wide range of $L^p$ estimates for operators of the type (\ref{CauchyIntegral}).

\subsection{Motivation}

Calder\'{o}n observed that one can write the following as an average

$$ \frac{A(x)-A(y)}{x-y} = \int_{0}^{1}A'(x+\alpha(y-x))d\alpha. $$

\noindent Using this trick and a substitution he rewrote his first commutator as

$$ \mathcal{C}_{A}^{(1)}f(x) = \int_{0}^{1}\int_{\mathbb{R}}A'(x+\alpha t)f(x+t)\frac{1}{t}dt\smallskip  d\alpha .$$

\noindent He then asked if one dropped the average and fixed $\alpha$ whether $L^p$ estimates could be found for the resulting operator, uniformly in $\alpha$. This motivated the definition of the bilinear Hilbert transform

$$ BHT_{\alpha}(f_1, f_2)(x) = p.v. \int\limits_{\mathbb{R}}f_1(x+\alpha t)f_2(x+t)\frac{1}{t}dt .$$

In two papers from 1997 and 1999, Lacey and Thiele showed that the bilinear Hilbert transform $BHT_{\alpha}$ maps $L^p \times L^q$ into $L^r$ when $\frac{1}{p} + \frac{1}{q} = \frac{1}{r}$, $1<p,q\leq \infty$ and $\frac{2}{3} <r<\infty$ with a bound depending on $\alpha$ \cite{LT1, LT2}. Uniform boundedness of these $L^p$ estimates was resolved later \cite{GL1, T1}. Note that $r$ only goes down to $\frac{2}{3}$, not $\frac{1}{2}$ as one would expect from H\"{o}lder type estimates. It is still an open problem whether $r$ can be pushed all the way down to $\frac{1}{2}$.

In a similar fashion then one can rewrite the second Calder\'{o}n commutator with two averages. Dropping both averages motivates the definition of the trilinear Hilbert transform.

$$ THT_{\vec{\alpha}}(f_1, f_2, f_3)(x) = p.v. \int\limits_{\mathbb{R}}f_1(x+\alpha_1 t)f_2(x+\alpha_2 t)f_3(x+t)\frac{1}{t}dt $$

\noindent In contrast to the bilinear Hilbert transform then no $L^p$ estimates are known for the trilinear Hilbert transform.

In this paper we will study a trilinear operator motivated by $C_{A}^{(2)}$ in a similar fashion as $THT_{\vec{\alpha}}$, except we drop one average, not two. Define

\begin{equation}\label{operator}
T_{\beta}(f_1, f_2, f_3)(x) := p.v. \int\limits_{\mathbb{R}}\left(\int_{0}^{1}f_1(x+\alpha t) d\alpha\right)f_2(x+\beta t)f_3(x+t)\frac{1}{t}dt .
\end{equation}

\subsection{Known estimates}

Benyi, Demeter, Nahmod, Thiele, Torres and Villarroya obtained a modulation invariant bilinear $T(1)$ theorem \cite{BDNTTV1}. If one fixes $f_1\in L^{\infty}(\mathbb{R})$ and looks at the bilinear operator

$$ (f_2, f_3) \mapsto p.v. \int\limits_{\mathbb{R}}\frac{\left(\int_{0}^{1}f_1(x+\alpha t) d\alpha\right)}{t}f_2(x+\beta t)f_3(x+t)dt , $$

\noindent one can apply their theorem to obtain the following $L^p$ estimates for $T_{\beta}$

$$ T_{\beta} : L^{\infty}\times L^{p_1}\times L^{p_2} \rightarrow L^p $$

\noindent for $\beta \notin \lbrace 0,1 \rbrace$ if $\displaystyle{\frac{1}{p_1} + \frac{1}{p_2} = \frac{1}{p}}$, $1<p_1, p_2 \leq \infty$ and $\frac{2}{3} < p < \infty$. These are the only known $L^p$ estimates for $T_{\beta}$.

\subsection{Result}

The main theorem of this paper establishes the following wide range of $L^p$ estimates for $T_{\beta}$.

\begin{theorem}\label{original_thm}
Let $\beta \notin \lbrace 0,1 \rbrace$, $1<p_1, p_2, p_3 \leq \infty$,
$$\frac{1}{2}< p:= \frac{p_1p_2 p_3}{p_1p_2 + p_1p_3 + p_2p_3} < \infty  \quad \text{ and } \quad \frac{2}{3}< \frac{p_2 p_3}{p_2 + p_3} \leq \infty.$$

\noindent Then there exists a constant $C_{\beta,p_1,p_2,p_3}$ such that
$$ \|T_{\beta}(f_1, f_2, f_3)\|_{p} \leq C_{\beta,p_1,p_2,p_3} \|f_1\|_{p_1}\|f_2\|_{p_2}\|f_3\|_{p_3} $$
for all $f_1$, $f_2$ and $f_3$ in $\mathcal{S}(\mathbb{R})$.
\end{theorem}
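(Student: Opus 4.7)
The plan is to view $T_\beta$ as a trilinear Fourier multiplier and carry out a time-frequency analysis adapted to the non-standard singular structure of its symbol. Taking Fourier transforms and integrating in $t$ first, one obtains
$$m(\xi_1,\xi_2,\xi_3) = -i\pi\int_0^1 \mathrm{sgn}(\alpha\xi_1+\beta\xi_2+\xi_3)\,d\alpha,$$
a bounded, piecewise smooth function that fails to be $C^1$ along the hyperplanes $\Pi_0=\{\beta\xi_2+\xi_3=0\}$ and $\Pi_1=\{\xi_1+\beta\xi_2+\xi_3=0\}$ and jumps across the codimension-two line $\Lambda=\Pi_0\cap\Pi_1=\{\xi_1=0,\; \beta\xi_2+\xi_3=0\}$. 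The discontinuity set is thus one-dimensional: milder than the symbol of the trilinear Hilbert transform (singular along a full hyperplane and out of reach of current methods) but more singular than a Coifman-Meyer symbol. Since $m$ depends only on $\xi_1$ and $\eta:=\beta\xi_2+\xi_3$, the operator $T_\beta$ carries a one-parameter family of modulation symmetries in the direction $(0,1,-\beta)$, analogous to the modulation invariance of the bilinear Hilbert transform.

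Next I would perform a Whitney decomposition in the $(\xi_1,\eta)$-plane relative to the origin and write $m=\sum_Q m_Q$, with $Q$ ranging over dyadic squares of side length $\ell(Q)\sim\mathrm{dist}(Q,0)$ and each $m_Q$ a smooth bump supported in $Q$ in the $(\xi_1,\eta)$ variables and independent of the transverse direction in $(\xi_2,\xi_3)$. When $|\xi_1|\gg|\eta|$ the symbol is essentially $\mathrm{sgn}(\xi_1)$ plus smooth corrections, so those pieces sum to an operator of Coifman-Meyer paraproduct type; the symmetric regime $|\eta|\gg|\xi_1|$ is handled analogously. The delicate pieces are the balanced ones, $|\xi_1|\sim|\eta|\sim\ell(Q)$: they are bilinear-Hilbert-like in the frequency pair $(\xi_1,\eta)$ and demand a wave-packet decomposition. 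For each such $Q$ I would extract families of tritiles adapted to rectangles dual to $Q$ in $(\xi_1,\xi_2,\xi_3)$-frequency space and reduce matters to a uniform estimate on a discrete trilinear model sum.

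From there I would deploy the Lacey-Thiele and Muscalu-Tao-Thiele tree-and-forest machinery. One defines sizes and energies asymmetrically: BHT-type $L^2$-BMO sizes for $f_2$ and $f_3$ (which share the BHT singularity), and ordinary $L^{p_1}$-maximal sizes for $f_1$ (since averaging in $\alpha$ has erased any singular interaction in $\xi_1$). Trees are selected greedily, each tree is controlled by the product of sizes times the top-tile measure, forests are summed geometrically, and restricted-weak-type estimates at the vertices of the Banach triangle followed by multilinear interpolation recover the full range $\tfrac{1}{2}<p<\infty$ together with the BHT restriction $p_2p_3/(p_2+p_3)>\tfrac{2}{3}$ on the pair $(f_2,f_3)$. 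The main obstacle, and the novelty announced in the abstract, is precisely the non-standardness of $m$: neither Coifman-Meyer nor BHT theory applies off the shelf since the discontinuity has intermediate codimension two, so the wave packets, sizes, and energies must be hand-tailored to the $(\xi_1,\eta)$-geometry. The exclusion $\beta\notin\{0,1\}$ is what keeps the modulation direction $(0,1,-\beta)$ transverse to the hyperplanes $\xi_3=0$ and $\xi_2+\xi_3=0$, preventing degeneracy of the tritile phase-space geometry.
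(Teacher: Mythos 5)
Your high-level picture agrees with the paper's: the symbol is treated as a non-standard \emph{bilinear} symbol in the variables $\xi_1$ and $\eta=\beta\xi_2+\xi_3$, one decomposes according to the three regimes $|\xi_1|\gg|\eta|$, $|\eta|\gg|\xi_1|$, $|\xi_1|\sim|\eta|$ (the paper's cases \eqref{case1}--\eqref{case3}), and one runs a tree/size/energy argument with asymmetric roles for the paraproduct-like input $f_1$ and the BHT-like pair $(f_2,f_3)$. But there is a genuine gap at the heart of your reduction: you assert that after the Whitney decomposition each piece $m_Q$ is ``a smooth bump,'' that the regime $|\xi_1|\gg|\eta|$ is ``$\mathrm{sgn}(\xi_1)$ plus smooth corrections'' of Coifman--Meyer type, and that the symmetric regime is handled analogously. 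This is false, and it is precisely the difficulty the paper is about. The planes of non-differentiability $\{\beta\xi_2+\xi_3=0\}$ and $\{\xi_1+\beta\xi_2+\xi_3=0\}$ pass \emph{through} the supports of the localized pieces: case \eqref{case1} localizes to a strip around $\eta=0$, where the symbol behaves like $1+2\eta/\xi_1$ for $\eta<0$ and $1$ for $\eta>0$ --- continuous with a kink, not smooth --- and case \eqref{case3} is cut by $\xi_1+\eta=0$. Consequently the localized symbol cannot be expanded into rapidly decaying wave-packet coefficients. The paper instead expands each localized piece in a Fourier series on its cube and shows (Lemmas \ref{diff_symbol} and \ref{decay}, via integration by parts in which second derivatives of the symbol collapse to delta functions on those planes) that the coefficients $C^{\vec Q}_{n_1,n_2,n_3}$ decay only like $\langle n_3\rangle^{-2}$ in the worst direction.

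This slow decay is not a technicality one can suppress: it is the reason the theorem stops at $p>\tfrac12$ rather than the $p>\tfrac25$ one would get from the smooth-bump model (the paper's Theorem \ref{6.2} for the discrete operators does reach $\tfrac25$, and the loss to $\tfrac12$ occurs exactly when summing $C(n_1,n_2,n_3)^{p_4}$ over the translation parameters, which requires $\langle n_3\rangle^{-2p_4}$ summable). Your proposal neither produces this threshold nor explains it; under your assumptions the natural conclusion would be the wrong range. Relatedly, the Fourier-series expansion forces the model sums to involve wave packets translated by $n_j$ units in time, so the entire tree machinery must be redone with only logarithmic losses in $\langle n_j\rangle$: shifted trees supported on ``$I_T$ and friends,'' the shifted dyadic maximal function of Lemma \ref{shifted_max}, and the $(\log_2\langle n_j\rangle)$-lossy size and energy estimates of Lemmas \ref{size_est} and \ref{energy_est}. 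A second, smaller omission is combinatorial: because the paraproduct tile $P_1$ is determined by scale alone and does not determine the other tiles, the standard rank-one definition fails, and the paper must introduce the rank $(1,0)$ structure of Section \ref{rank} before any tree selection can be justified. Without these ingredients the proposal does not yield the stated theorem.
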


\noindent The theorem recovers all known $L^p$ estimates for the operator. Known $L^p$ estimates for both the bilinear Hilbert transform and for Calder\'{o}n's first commutator follow as a corollary.

Compared to the theorem on the bilinear Hilbert transform, this theorem has an extra condition.

$$ \frac{2}{3}< \frac{p_2 p_3}{p_2 + p_3} \leq \infty $$

\noindent This condition implies that we have not improved the previously known $L^p$ estimates for the bilinear Hilbert transform. We also require the condition $\frac{1}{2}< p$, which is not the largest possible range of $L^p$ estimates expected. Based on the known estimates for the bilinear Hilbert transform one would expect to be able to go all the way down to $\frac{2}{5}$. This remains an open problem. 

Note that if $\beta =0,1$ then we obtain trilinear operators that only involve multiplication of functions and the first Calder\'{o}n commutator. The $L^p$-bounds of these operators are easy to determine.

\subsection{Approach}

The standard way of understanding the boundedness of the Calder\'{o}n commutators is to use the $T(1)$ theorem. In order to use such an approach on $T_{\beta}$ we would need some sort of a trilinear $T(1)$ theorem. Despite the existence of some multilinear $T(1)$ theorems \cite{CJ1,GT1} then there is no such appropriate theorem for $T_{\beta}$. The other canonical way of trying to understand $T_{\beta}$ would be to establish uniform $L^p$ estimates on the trilinear Hilbert transform. Since no $L^p$ estimates exist, uniform estimates are out of reach. The obvious approaches to find $L^p$ estimates fail so we need some novel ideas.

On the Fourier side it is equivalent to show $L^p$ estimates for an operator $T_{\beta}$ given by

\begin{multline}
T_{\beta}(f_1, f_2, f_3)(x) = \int\limits_{\mathbb{R}^3} \! \left[\int_{0}^{1}\mathsf{sgn}(\alpha\xi_1 \! + \! \beta\xi_2 \! + \! \xi_3)d\alpha\right]\widehat{f_1}(\xi_1)\widehat{f_2}(\xi_2)\widehat{f_3}(\xi_3) \\
e^{2\pi i (\xi_1+\xi_2+\xi_3)x}d\xi_1 d\xi_2 d\xi_3 .
\end{multline}

\noindent where $\mathsf{sgn}$ is the usual sign function. The symbol $\int_{0}^{1}\mathsf{sgn}(\alpha\xi_1 \! + \! \beta\xi_2 \! + \! \xi_3)d\alpha$ has a singularity around the line $\xi_1=0$, $\beta\xi_2 + \xi_3 = 0$ in the sense that it is discontinuous. This is similar to the bilinear Hilbert transform.  Unlike standard symbols, which are assumed to be smooth outside the set where they are singular, this symbol is continuous but not differentiable on the planes $\xi_1 + \beta\xi_2 + \xi_3 = 0$ and $\xi_1 = 0$ away from the previous line. We approach the symbol as a rough non-standard symbol and use techniques in the spirit of the bilinear Hilbert transform. An important ingredient in that approach are new proofs of the $L^p$ estimates for the Calder\'{o}n commutators by Muscalu \cite{M1}. The techniques and notation are also heavily inspired by Muscalu, Tao and Thiele \cite{MTT1,MTT2}.

There exist theorems that give immediate $L^p$ estimates for operators with standard symbols where the dimension of the singularity is strictly less than half the dimension of the frequency space of the form associated to the operator \cite{MTT4}. Even if our symbol had been standard outside the line then those kind of theorems would not have been applicable because the line is degenerate.

\subsection{Acknowledgements}

The author would like to thank his thesis adviser, Camil Muscalu, for his guidance and many helpful conversations about this problem.

\section{Notation}

We use $A\lesssim B$ to denote the statement that $A\leq CB$ for some large constant $C$ and $A\ll B$ to denote the statement that $A \leq C^{-1}B$ for some large constant $C$. Our constants $C$ shall always be independent of the tiles $\vec{P}$.

Given any interval $I$, let $|I|$ denote the Lebesgue measure of $I$ and let $cI$ denote the interval with the same center as $I$ but $c$ times the side-length. Also define the approximate cutoff function $\tilde{\chi}_{I}$ by

$$ \tilde{\chi}_{I}(x) := (1+(\frac{|x-x_I|}{|I|})^2)^{-1/2} $$

\noindent where $x_I$ is the center of $I$.

Define $\langle n \rangle := 2 + |n|$ for $n\in \mathbb{Z}$.

\section{Symbol}

The meaning of \eqref{operator} is

\begin{equation}\label{op_limit}
\lim\limits_{\epsilon\rightarrow 0^{+}} \int\limits_{|t|>\epsilon}\left(\int_{0}^{1}f_1(x+\alpha t) d\alpha\right)f_2(x+\beta t)f_3(x+t)\frac{1}{t}dt
\end{equation}

\noindent where the limit exists. Assume $f_1$, $f_2$ and $f_3$ are Schwartz functions on $\mathbb{R}$. We will show that \eqref{op_limit} exists in that case and we will rewrite it in a convenient way.

Write \eqref{op_limit} as

\begin{multline*}
\lim\limits_{\substack{\epsilon\rightarrow 0^{+} \\ N \rightarrow \infty}} \quad \int\limits_{\epsilon < |t| < N}\left[\int_{0}^{1} \int_{\mathbb{R}}\widehat{f_1}(\xi_1)e^{2\pi i \xi_1(x+\alpha t)}d\xi_1 \ d\alpha\right] \\
\int_{\mathbb{R}}\widehat{f_2}(\xi_2)e^{2\pi i \xi_2(x+\beta t)}d\xi_2 \int_{\mathbb{R}}\widehat{f_3}(\xi_3)e^{2\pi i \xi_3(x+ t)}d\xi_3 \ \frac{1}{t}dt
\end{multline*}

\noindent which is equal to

\begin{multline*}
\lim\limits_{\substack{\epsilon\rightarrow 0^{+} \\ N \rightarrow \infty}} \quad \int\limits_{\epsilon < |t| < N}\int_{\mathbb{R}^3}\left[\int_{0}^{1} \frac{1}{t} e^{-2\pi i (-\alpha \xi_1 - \beta \xi_2 - \xi_3)} d\alpha\right] \widehat{f_1}(\xi_1)\widehat{f_2}(\xi_2)\widehat{f_3}(\xi_3)e^{2\pi i x(\xi_1+\xi_2+\xi_3)} d\xi_1 d\xi_2 d\xi_3 dt
\end{multline*}

\noindent The function being integrated, viewed as depending on $\xi_1$, $\xi_2$, $\xi_3$ and $t$ is clearly absolutely integrable on $\mathbb{R}^4$ and by applying Fubini's theorem together with dominated convergence we see that the formula becomes equivalent to

\begin{multline}\label{four}
\int\limits_{\mathbb{R}^3} \! \left[\int_{0}^{1}\mathsf{sgn}(-\alpha\xi_1 \! - \! \beta\xi_2 \! - \! \xi_3)d\alpha\right]\widehat{f_1}(\xi_1)\widehat{f_2}(\xi_2)\widehat{f_3}(\xi_3) e^{2\pi i (\xi_1+\xi_2+\xi_3)x}d\xi_1 d\xi_2 d\xi_3
\end{multline}

\noindent which clearly exists since $\widehat{f_1}$, $\widehat{f_2}$ and $\widehat{f_3}$ are also Schwartz functions.

A product of three functions satisfies a H\"{o}lder type inequality as we obtain in Theorem \ref{original_thm}. Since the product can be written as

\begin{equation}\label{mult}
\int\limits_{\mathbb{R}^3} \! \widehat{f_1}(\xi_1)\widehat{f_2}(\xi_2)\widehat{f_3}(\xi_3) e^{2\pi i (\xi_1+\xi_2+\xi_3)x}d\xi_1 d\xi_2 d\xi_3
\end{equation}

\noindent and using $\mathsf{sgn}(-x) = - \mathsf{sgn}(x)$ it becomes clear by subtracting \eqref{four} from \eqref{mult} that it is enough to consider $L^p$ estimates for

\begin{multline}\label{real_op}
\tilde{T}_{\beta}(f_1, f_2, f_3)(x) := \int\limits_{\mathbb{R}^3} \! \left[\int_{0}^{1}1_{\mathbb{R}_{+}}(\alpha\xi_1 \! + \! \beta\xi_2 \! + \! \xi_3)d\alpha\right]\widehat{f_1}(\xi_1)\widehat{f_2}(\xi_2)\widehat{f_3}(\xi_3) \\
e^{2\pi i (\xi_1+\xi_2+\xi_3)x}d\xi_1 d\xi_2 d\xi_3 .
\end{multline}

\noindent where $1_{\mathbb{R}_{+}}$ is the characteristic function for the positive real axis.

Similar to what was mentioned in the introduction then the symbol

\begin{equation*}
\int_{0}^{1}1_{\mathbb{R}_{+}}(\alpha\xi_1 \! + \! \beta\xi_2 \! + \! \xi_3)d\alpha
\end{equation*}

\noindent is not continuous around the line $\xi_1=0$, $\beta\xi_2 + \xi_3 = 0$, continuous but not differentiable around the planes $\xi_1 + \beta\xi_2 + \xi_3 = 0$ and $\beta\xi_2 + \xi_3 = 0$, away from the previous line, but smooth everywhere else. It is tempting to view the symbol as a trilinear symbol of the variables $\xi_1$, $\xi_2$, $\xi_3$. That would however result in a problem of the same difficulty as the trilinear Hilbert transform. We choose thus instead to view it as a non-standard bilinear symbol of the variables $\xi_1$ and $\beta \xi_2 + \xi_3$.

\section{Discretization}

We will now come up with a "discretized" variant of the "continuous" form associated to \eqref{real_op}. We start by reviewing some standard definitions and comments \cite{MTT2}.

\begin{definition}
Let $n \geq 1$ and $\sigma \in \lbrace 0, \frac{1}{3}, \frac{2}{3} \rbrace^{n}$. We define the shifted $n$-dyadic mesh $D = D_{\sigma}^{n}$ to be the collection of cubes of the form

\begin{equation*}
D_{\sigma}^{n} := \lbrace 2^j (k+(0,1)^n + (-1)^j \sigma )| j\in\mathbb{Z}, \quad k\in\mathbb{Z}^n \rbrace 
\end{equation*}

\noindent We define a shifted dyadic cube to be any member of a shifted $n$-dyadic mesh.
\end{definition}

Observe that for every cube $Q$, there exists a shifted dyadic cube $Q'$ such that $Q \subseteq \frac{7}{10}Q'$ and $|Q'| \sim |Q|$; this is best seen by first verifying the $n=1$ case.

\begin{definition}
A subset $D'$ of a shifted $n$-dyadic grid $D$ is called sparse, if for any two cubes $Q$, $Q'$ in $D$ with $Q \neq Q'$ we have $|Q| < |Q'|$ implies $|10^9 Q| < |Q'|$ and $|Q| = |Q'|$ implies $10^9 Q \cap 10^9 Q' = \emptyset$.
\end{definition}

Observe that any subset of a shifted $n$-dyadic grid (with $n\leq 4$ say), can be split into $O(1)$ sparse subsets.

\begin{definition}
Let $\sigma = (\sigma_1, \sigma_2, \sigma_3, \sigma_4) \in \lbrace 0, \frac{1}{3}, \frac{2}{3} \rbrace^4$, and let $1 \leq i \leq 4$. An $i$-tile with shift $\sigma_i$ is a rectangle $P = I_{P}\times \omega_P$ with area $1$ and with $I_P \in D_{0}^{1}$, $\omega_P \in D_{\sigma_i}^{1}$. A quadtile with shift $\sigma$ is a $4$-tuble $\vec{P} = (P_1, P_2, P_3, P_4)$ such that each $P_i$ is an $i$-tile with shift $\sigma_i$, and the $I_{P_i} = I_{\vec{P}}$ are independent of $i$. The frequency cube $Q_{\vec{P}}$ of a quadtile is defined to be $\Pi_{i=1}^{4}\omega_{P_i}$
\end{definition}

We sometimes refer to $i$-tiles with shift $\sigma$ just as $i$-tiles, or even as tiles, if the parameters $\sigma$, $i$ are unimportant.

\begin{definition}
A set $\vec{\mathbf{P}}$ of quadtiles is called sparse, if all quadtiles in $\vec{\mathbf{P}}$ have the same shift and the set $\lbrace Q_{\vec{P}}:\vec{P}\in\vec{\mathbf{P}}\rbrace$ is sparse.
\end{definition}

Again, any set of quadtiles can be split into $O(1)$ sparse subsets.

\begin{definition}
Let $P$ and $P'$ be tiles. We write $P' < P$ if $I_{P'} \subsetneq I_P$ and $5\omega_P \subseteq 5\omega_{P'}$, and $P' \leq P$ if $P' < P$ or $P' = P$. We write $P' \lesssim P$ if $I_{P'} \subseteq I_P$ and $10^7\omega_P \subseteq 10^7 \omega_{P'}$. We write $P' \lesssim' P$ if $P' \lesssim P$ and $P' \nleq P$.
\end{definition}

This ordering by Muscalu, Tao and Thiele \cite{MTT2} is in the spirit of that in Fefferman \cite{F1} or Lacey and Thiele \cite{LT1, LT2}. The main difference from the previous orderings is that $P'$ and $P$ do not quite have to intersect which turns out to be convenient for technical purposes.

\begin{definition}
Let $P$ be a tile. An $L^p$ normalized wave packet on $P$, $1 \leq p < \infty$, is a function $\phi_P$ which has Fourier support in $\frac{9}{10}\omega_{P}$ and obeys the estimates

\begin{equation*}
|\phi_P(x)| \lesssim |I_P|^{-1/p}\tilde{\chi}_{I}(x)^M
\end{equation*}

\noindent for all $M>0$, with the implicit constant depending on $M$.
\end{definition}

\noindent Heuristically, $\phi_P$ is $L^p$-normalized and is supported in $P$.

Now that we have the tools from Muscalu, Tao and Thiele \cite{MTT2} then let us start decomposing. We start with two standard Littlewood-Paley decompositions and write

\begin{equation*}
1_{\mathbb{R}}(\xi_1) = \sum\limits_{k_1}\widehat{\Psi_{k_1}}(\xi_1)
\end{equation*}

\noindent and

\begin{equation*}
1_{\mathbb{R}}(\beta\xi_2+\xi_3) = \sum\limits_{k_2}\widehat{\Psi_{k_2}}(\beta\xi_2 + \xi_3)
\end{equation*}

\noindent where as usual, $\widehat{\Psi_{k_1}}(\xi_1)$ and $\widehat{\Psi_{k_2}}(\beta\xi_2 + \xi_3)$ are bumps supported in the regions $|\xi_1| \sim 2^{k_1}$ and $|\beta\xi_2+\xi_3|\sim 2^{k_2}$ respectively. In particular we get

\begin{equation}\label{littlewood_paley}
1_{\mathbb{R}}(\xi_1 , \beta\xi_2 + \xi_3) = \sum\limits_{k_1, k_2} \widehat{\Psi_{k_1}}(\xi_1)\widehat{\Psi_{k_2}}(\beta\xi_2 + \xi_3)
\end{equation}

\noindent By splitting \eqref{littlewood_paley} over the regions where $k_1 \ll k_2$, $k_2 \ll k_1$ and $k_1 \sim k_2$ we obtain the decomposition

\begin{equation}\label{case1}
1_{\mathbb{R}}(\xi_1 , \beta\xi_2 + \xi_3) = \sum\limits_{k}\widehat{\Psi_{k}}(\xi_1)\widehat{\Phi_{k}}(\beta\xi_2 + \xi_3) \quad +
\end{equation}
\begin{equation}\label{case2}
\sum\limits_{k}\widehat{\Phi_{k}}(\xi_1)\widehat{\Psi_{k}}(\beta\xi_2 + \xi_3) \quad +
\end{equation}
\begin{equation}\label{case3}
\sum\limits_{k_1 \sim k_2}\widehat{\Psi_{k_1}}(\xi_1)\widehat{\Psi_{k_2}}(\beta\xi_2 + \xi_3).
\end{equation}

\noindent where $\widehat{\Phi_{k}}$ is a bump supported on an interval, symmetric with respect to the origin of length $\sim 2^{k}$.

Note that $\widehat{\Phi_{k}}(\beta\xi_2 + \xi_3)$ is supported in $\mathbb{R}^2$ on a strip around the line $\beta\xi_2 + \xi_3 = 0$ of width $\sim 2^k$. We can cover that strip with shifted dyadic cubes with side-length $\sim 2^k$. Similarly then $\widehat{\Psi_{k}}(\beta\xi_2+\xi_3)$ is supported in $\mathbb{R}^2$ on two strips of width $\sim 2^k$ but this time away from $\beta\xi_2 + \xi_3=0$. Again we can cover those strips with shifted dyadic cubes of a similar scale.

Thus we come up with a decomposition

\begin{equation}\label{decomp1}
a(\xi_1, \xi_2, \xi_3) = \sum\limits_{\vec{Q} \in \vec{\mathbf{Q}}}\widehat{\phi_{Q_1, 1}}(\xi_1)\widehat{\phi_{Q_2, 2}}(\xi_2)\widehat{\phi_{Q_3, 3}}(\xi_3)
\end{equation}

for each of the three cases \eqref{case1}, \eqref{case2}, \eqref{case3} such that

\begin{equation*}
\frac{1}{10} < a(\xi_1,\xi_2,\xi_3) < 10.
\end{equation*}

\noindent Here $\phi_{Q_i, i}$ is an $L^1$ normalized wave packet on a tile $I_{\vec{Q}}\times Q_i$ for $i=1,2,3$, where $Q_i$ is a shifted dyadic interval that depends on the decomposition in each of the three cases and $I_{\vec{Q}}$ is a dyadic interval such that $|I_{\vec{Q}}| \sim |Q_i|^{-1}$ for $i=1,2,3$.

Since $\xi_1 \in \frac{9}{10}Q_1$, $\xi_2 \in \frac{9}{10}Q_2$ and $\xi_3 \in \frac{9}{10}Q_3$ it follows that $\xi_1 + \xi_2 + \xi_3 \in  \frac{9}{10}Q_1 +  \frac{9}{10}Q_2 +  \frac{9}{10}Q_3$ and as a consequence one can find a shifted dyadic interval $Q_4$ with the property that $\frac{9}{10}Q_1 +  \frac{9}{10}Q_2 +  \frac{9}{10}Q_3 \subseteq -\frac{7}{10}Q_4$ and also satisfying $|Q_1| = |Q_2| = |Q_3| \sim |Q_4|$. In particular there exists an $L^1$ normalized wave packet $\phi_{Q_4, 4}$ adapted to $I\times Q_4$ such that $\widehat{\phi_{Q_4, 4}} \equiv 1$ on $-\frac{9}{10}Q_1 -  \frac{9}{10}Q_2 - \frac{9}{10}Q_3$.

Thus \eqref{decomp1} can be written as

\begin{equation}\label{decomp2}
a(\xi_1, \xi_2, \xi_3) = \sum\limits_{\vec{Q} \in \vec{\mathbf{Q}}}\widehat{\phi_{Q_1, 1}}(\xi_1)\widehat{\phi_{Q_2, 2}}(\xi_2)\widehat{\phi_{Q_3, 3}}(\xi_3)\widehat{\phi_{Q_4, 4}}(-\xi_1 - \xi_2 - \xi_3)
\end{equation}

\noindent where this time $\vec{\mathbf{Q}}$ is a collection of shifted dyadic quasi-cubes in $\mathbb{R}^4$. Modulo a finite refinement we can assume that a sum of the type

\begin{equation}\label{decomp3}
\sum\limits_{\vec{Q} \in \vec{\mathbf{Q}}}\widehat{\phi_{Q_1, 1}}(\xi_1)\widehat{\phi_{Q_2, 2}}(\xi_2)\widehat{\phi_{Q_3, 3}}(\xi_3)\widehat{\phi_{Q_4, 4}}(-\xi_1 - \xi_2 - \xi_3)
\end{equation}

\noindent runs over a sparse collection of tiles $\vec{\mathbf{Q}}$. In such a sparse collection, then for every $Q\in\vec{\mathbf{Q}}$ there exists a unique shifted cube $\tilde{Q}$ in $\mathbb{R}^4$ such that $Q \subseteq \frac{7}{10}\tilde{Q}$ and with the diameter of $Q$ similar to the diameter of $\tilde{Q}$. This allows us to assume that a sum of the type \eqref{decomp3} runs over a sparse collections of shifted dyadic cubes such that $|Q_1| \sim |Q_2| \sim |Q_3| \sim |Q_4|$. Let $|\vec{Q}| \sim |Q_i|$, $i=1,2,3,4$, be the scale of the dyadic cube.

Further we know that in all three cases \eqref{case1}, \eqref{case2} and \eqref{case3} then the scale $|\vec{Q}|$ fixes the location of the tile $Q_1$. Also in the case \eqref{case1} where we are close to the line $\beta\xi_2 + \xi_3 = 0$ then the tiles $Q_2$ and $Q_3$ can be made to overlap while in the second two cases \eqref{case2}, \eqref{case3}, when we are away from the line $\beta\xi_2 + \xi_3 = 0$ then $Q_2$ and $Q_3$ can be made to be a couple of units of length $|\vec{Q}|$ away from another so they don't overlap.

We will now study the quadlinear form associated to \eqref{real_op}.

\begin{multline}\label{form}
\int_{\mathbb{R}}\tilde{T}_{\beta}(f_1, f_2, f_3)(x)f_4(x)dx \\
= \int\limits_{\xi_1 + \xi_2 + \xi_3 + \xi_4 = 0} \left[\int_{0}^{1}1_{\mathbb{R}_{+}}(\alpha\xi_1 \! + \! \beta\xi_2 \! + \! \xi_3)d\alpha\right]\widehat{f_1}(\xi_1)\widehat{f_2}(\xi_2)\widehat{f_3}(\xi_3)\widehat{f_4}(\xi_4)d\xi_1 d\xi_2 d\xi_3 d\xi_4 \\
= \sum\limits_{\vec{Q} \in \vec{\mathbf{Q}}}\ \int\limits_{\xi_1 + \xi_2 + \xi_3 + \xi_4 = 0} \frac{ \left[\int_{0}^{1}1_{\mathbb{R}_{+}}(\alpha\xi_1 \! + \! \beta\xi_2 \! + \! \xi_3)d\alpha\right]}{a(\xi_1, \xi_2, \xi_3)}\widehat{\phi_{Q_1, 1}}(\xi_1)\widehat{\phi_{Q_2, 2}}(\xi_2)\widehat{\phi_{Q_3, 3}}(\xi_3)\widehat{\phi_{Q_4, 4}}(-\xi_1 - \xi_2 - \xi_3) \\
\widehat{f_1}(\xi_1)\widehat{f_2}(\xi_2)\widehat{f_3}(\xi_3)\widehat{f_4}(\xi_4)d\xi_1 d\xi_2 d\xi_3 d\xi_4 \\
= \sum\limits_{\vec{Q} \in \vec{\mathbf{Q}}}\ \int\limits_{\xi_1 + \xi_2 + \xi_3 + \xi_4 = 0} \frac{ \left[\int_{0}^{1}1_{\mathbb{R}_{+}}(\alpha\xi_1 \! + \! \beta\xi_2 \! + \! \xi_3)d\alpha\right]}{a(\xi_1, \xi_2, \xi_3)}\widehat{\phi_{Q_1, 1}}(\xi_1)\widehat{\phi_{Q_2, 2}}(\xi_2)\widehat{\phi_{Q_3, 3}}(\xi_3)\widehat{\phi_{Q_4, 4}}(\xi_4) \\
\widehat{f_1}(\xi_1)\widehat{f_2}(\xi_2)\widehat{f_3}(\xi_3)\widehat{f_4}(\xi_4)d\xi_1 d\xi_2 d\xi_3 d\xi_4 \\
\end{multline}

\noindent We can write

\begin{equation*}
\frac{ \left[\int_{0}^{1}1_{\mathbb{R}_{+}}(\alpha\xi_1 \! + \! \beta\xi_2 \! + \! \xi_3)d\alpha\right]}{a(\xi_1, \xi_2, \xi_3)}\widehat{\phi_{Q_1, 1}}(\xi_1)\widehat{\phi_{Q_2, 2}}(\xi_2)\widehat{\phi_{Q_3, 3}}(\xi_3)
\end{equation*}

\noindent as

\begin{equation*}
\frac{ \left[\int_{0}^{1}1_{\mathbb{R}_{+}}(\alpha\xi_1 \! + \! \beta\xi_2 \! + \! \xi_3)d\alpha\right]}{a(\xi_1, \xi_2, \xi_3)}\widehat{\tilde{\phi}_{Q_1, 1}}(\xi_1)\widehat{\tilde{\phi}_{Q_2, 2}}(\xi_2)\widehat{\tilde{\phi}_{Q_3, 3}}(\xi_3) 
\widehat{\phi_{Q_1, 1}}(\xi_1)\widehat{\phi_{Q_2, 2}}(\xi_2)\widehat{\phi_{Q_3, 3}}(\xi_3)
\end{equation*}

\noindent where $\widehat{\tilde{\phi}_{Q_1, 1}} \otimes \widehat{\tilde{\phi}_{Q_2, 2}} \otimes \widehat{\tilde{\phi}_{Q_3, 3}}$ is identically equal to $1$ on the support of $\widehat{\phi_{Q_1, 1}} \otimes \widehat{\phi_{Q_2, 2}} \otimes \widehat{\phi_{Q_3, 3}}$.

Now split

\begin{equation*}
\frac{ \left[\int_{0}^{1}1_{\mathbb{R}_{+}}(\alpha\xi_1 \! + \! \beta\xi_2 \! + \! \xi_3)d\alpha\right]}{a(\xi_1, \xi_2, \xi_3)}\widehat{\tilde{\phi}_{Q_1, 1}}(\xi_1)\widehat{\tilde{\phi}_{Q_2, 2}}(\xi_2)\widehat{\tilde{\phi}_{Q_3, 3}}(\xi_3)
\end{equation*}

\noindent as a Fourier series

\begin{equation*}
\sum\limits_{n_1, n_2, n_3}C_{n_1, n_2, n_3}^{\vec{Q}}e^{2\pi i \frac{n_1}{|\vec{Q}|}\xi_1}e^{2\pi i \frac{n_2}{|\vec{Q}|}\xi_2}e^{2\pi i \frac{n_3}{|\vec{Q}|}\xi_3}.
\end{equation*}

\noindent The coefficient $C_{n_1, n_2, n_3}^{\vec{Q}}$ is given by

\begin{multline}\label{fourier_coeff}
C_{n_1, n_2, n_3}^{\vec{Q}} = \frac{1}{|\vec{Q}|^4} \int_{\mathbb{R}^3}\frac{ \left[\int_{0}^{1}1_{\mathbb{R}_{+}}(\alpha\xi_1 \! + \! \beta\xi_2 \! + \! \xi_3)d\alpha\right]}{a(\xi_1, \xi_2, \xi_3)}\widehat{\tilde{\phi}_{Q_1, 1}}(\xi_1)\widehat{\tilde{\phi}_{Q_2, 2}}(\xi_2)\widehat{\tilde{\phi}_{Q_3, 3}}(\xi_3) \\
e^{-2\pi i \frac{n_1}{|\vec{Q}|}\xi_1}e^{-2\pi i \frac{n_2}{|\vec{Q}|}\xi_2}e^{-2\pi i \frac{n_3}{|\vec{Q}|}\xi_3} d\xi_1 d\xi_2 d\xi_3
\end{multline}

\begin{lemma}

\begin{equation*}
|C_{n_1, n_2, n_3}^{\vec{Q}}| \lesssim C(n_1, n_2, n_3)
\end{equation*}

\noindent where the implicit constant does not depend on $\vec{Q}$.

\end{lemma}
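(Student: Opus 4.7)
The plan is to rescale the integral to unit scale and then extract decay in $(n_1,n_2,n_3)$ by integration by parts against the oscillatory factor.

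First, I would perform the change of variables $\xi_i = |\vec{Q}|\eta_i + c_i$, where $c_i$ is the center of $Q_i$. The Jacobian $|\vec{Q}|^3$ combined with the prefactor $|\vec{Q}|^{-4}$ in \eqref{fourier_coeff} and the normalization of the wave packets renders $C_{n_1,n_2,n_3}^{\vec{Q}}$ scale-invariant. Under this rescaling, each bump $\widehat{\tilde{\phi}_{Q_i,i}}(\xi_i)$ becomes $\widehat{\tilde{\psi}}_i(\eta_i)$, a smooth cutoff on a unit-scale interval whose derivatives of every order are bounded uniformly in $\vec{Q}$; the phase $e^{-2\pi i n_i\xi_i/|\vec{Q}|}$ factors as a unimodular constant times $e^{-2\pi i n_i\eta_i}$. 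Since $1_{\mathbb{R}_+}$ is positively-homogeneous, the rescaled symbol is
\[
\tilde{m}_{\vec{Q}}(\eta) = \int_0^1 1_{\mathbb{R}_+}\bigl(\alpha\eta_1 + \beta\eta_2 + \eta_3 + \mu_{\vec{Q}}(\alpha)\bigr)\,d\alpha, \qquad \mu_{\vec{Q}}(\alpha) := \frac{\alpha c_1 + \beta c_2 + c_3}{|\vec{Q}|},
\]
which, after performing the $\alpha$-integration explicitly, is continuous and piecewise linear in $\eta$ with at most two kink hyperplanes whose locations remain bounded on the support of the rescaled bumps.

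Next, I would integrate by parts in each of $\eta_1,\eta_2,\eta_3$ against the oscillatory exponential. Since $\tilde{m}_{\vec{Q}}/\tilde{a}_{\vec{Q}}$ is Lipschitz with uniformly bounded first derivatives and the bumps $\widehat{\tilde{\psi}}_i$ are Schwartz uniformly in $\vec{Q}$, one integration by parts per variable is unproblematic and yields the baseline estimate $|C_{n_1,n_2,n_3}^{\vec{Q}}|\lesssim\langle n_1\rangle^{-1}\langle n_2\rangle^{-1}\langle n_3\rangle^{-1}$. To strengthen this to any polynomial decay, I would decompose $\tilde{m}_{\vec{Q}}$ into its affine pieces across the kink hyperplanes: on each piece $\tilde{m}_{\vec{Q}}/\tilde{a}_{\vec{Q}}$ is $C^\infty$ with uniform bounds, so one may iterate integration by parts arbitrarily many times. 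The boundary contributions picked up along the hyperplanes reduce to two-dimensional oscillatory integrals of Schwartz bumps against an exponential, which themselves decay rapidly by a further one-variable integration by parts in tangential directions. Assembling the pieces then gives $C(n_1,n_2,n_3) = \prod_{i=1}^3\langle n_i\rangle^{-N}$ for any desired $N$.

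The main obstacle is bookkeeping the boundary terms along the kink hyperplanes, whose positions depend on $\vec{Q}$ through $\mu_{\vec{Q}}(\alpha)$. The saving grace is that after rescaling these positions lie in a bounded region of $\eta$-space on the support of the bumps, so the boundary geometry is uniform in $\vec{Q}$; moreover in case \eqref{case2}, where $|\beta\xi_2+\xi_3|\gg|\xi_1|$, the sign of $\alpha\xi_1+\beta\xi_2+\xi_3$ is already constant in $\alpha\in[0,1]$ on the support of the bumps, so $\tilde{m}_{\vec{Q}}$ is locally constant and the integration by parts proceeds unobstructed. Together these observations produce the desired bound with constants independent of $\vec{Q}$.
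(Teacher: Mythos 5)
Your setup (rescaling to unit scale via the degree-zero homogeneity of $1_{\mathbb{R}_+}$, disposing of case \eqref{case2} because the sign of $\alpha\xi_1+\beta\xi_2+\xi_3$ is constant there, and isolating the two kink hyperplanes in cases \eqref{case1} and \eqref{case3}) matches the paper's strategy in Section 6. But your final step contains a genuine error. First, a small one: after performing the $\alpha$-integration the middle piece of the symbol is $1+(\beta\xi_2+\xi_3)/\xi_1$, which is homogeneous of degree zero but not linear, so the symbol is not piecewise linear. The serious one: the symbol is only Lipschitz across the planes $\xi_1+\beta\xi_2+\xi_3=0$ and $\beta\xi_2+\xi_3=0$ (its gradient jumps there), so you can integrate by parts at most twice in a direction transverse to a kink before a non-cancelling boundary term appears. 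That boundary term is an oscillatory integral over the hyperplane, and tangential integration by parts on it produces rapid decay only in the \emph{tangential} frequency components --- for the plane $\xi_1+\beta\xi_2+\xi_3=0$ these are $n_1-n_3$ and $n_2-\beta n_3$, not $n_1,n_2,n_3$ separately. Along the diagonal $n_1\approx n_3$, $n_2\approx\beta n_3$ there is no tangential oscillation at all, and the only decay available is the quadratic factor $\langle n_3\rangle^{-2}$ gained before collapsing onto the plane. Your claimed bound $\prod_{i=1}^{3}\langle n_i\rangle^{-N}$ for arbitrary $N$ is therefore false.

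This is not a cosmetic issue: the paper's Lemma \ref{decay} records exactly this limitation (terms of the form $\langle n_3\rangle^{-2}\langle n_1-n_3\rangle^{-M_2}\langle n_2-\beta n_3\rangle^{-M_3}$), and it is precisely the $\langle n_3\rangle^{-2}$ factor that forces the restriction $p_4>\frac{1}{2}$ in Theorem \ref{original_thm}, since after changing variables to $k=n_1-n_3$ and $l=n_2-\beta n_3$ one must sum $\langle n_3\rangle^{-2p_4}$ over $n_3$. A proof yielding arbitrary polynomial decay in each $n_i$ would prove too much. To repair the argument, follow the paper: substitute so that the symbol becomes $\frac{1}{\xi_1}\int_0^{\xi_1}1_{\mathbb{R}_+}(\alpha+\beta\xi_2+\xi_3)\,d\alpha$ (legitimate because $\xi_1$ is bounded away from $0$ on the supports in cases \eqref{case1} and \eqref{case3}), use the identities of Lemma \ref{diff_symbol} showing that any second derivative of $\int_0^{\xi_1}1_{\mathbb{R}_+}(\alpha+\beta\xi_2+\xi_3)\,d\alpha$ is a combination of Dirac deltas on the two planes, and then track which linear combinations of the $n_i$ survive as tangential frequencies after each collapse; this yields the correct, weaker form of $C(n_1,n_2,n_3)$.
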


This lemma is a consequence of lemma \ref{decay} that we prove in section 6. The main point for now is that the Fourier coefficient is bounded uniformly independently of the dyadic cube $\vec{Q}$.

We can now majorize \eqref{form} by

\begin{multline*}
\sum\limits_{n_1, n_2, n_3}C(n_1, n_2, n_3)\sum\limits_{\vec{Q} \in \vec{\mathbf{Q}}}|\int\limits_{\xi_1 + \xi_2 + \xi_3 + \xi_4 = 0}\widehat{\phi_{Q_1, 1}}(\xi_1)\widehat{\phi_{Q_2, 2}}(\xi_2)\widehat{\phi_{Q_3, 3}}(\xi_3)\widehat{\phi_{Q_4, 4}}(\xi_4)\\
\widehat{f_1}(\xi_1)\widehat{f_2}(\xi_2)\widehat{f_3}(\xi_3)\widehat{f_4}(\xi_4)e^{-2\pi i \frac{n_1}{|\vec{Q}|}\xi_1}e^{-2\pi i \frac{n_2}{|\vec{Q}|}\xi_2}e^{-2\pi i \frac{n_3}{|\vec{Q}|}\xi_3}d\xi_1 d\xi_2 d\xi_3 d\xi_4| \\
= \sum\limits_{n_1, n_2, n_3}C(n_1, n_2, n_3)\sum\limits_{\vec{Q} \in \vec{\mathbf{Q}}}|\int\limits_{\mathbb{R}^5}\widehat{f_1 * \phi_{Q_1, 1}}(\xi_1)\widehat{f_2 * \phi_{Q_2, 2}}(\xi_2)\widehat{f_3 * \phi_{Q_3, 3}}(\xi_3)\widehat{f_4 * \phi_{Q_4, 4}}(\xi_4) \\
e^{2\pi i (\xi_1+\xi_2+\xi_3+\xi_4)x}d\xi_1 d\xi_2 d\xi_3 d\xi_4 dx| \\
=  \sum\limits_{n_1, n_2, n_3}C(n_1, n_2, n_3)\sum\limits_{\vec{Q} \in \vec{\mathbf{Q}}}|\int_{\mathbb{R}}(f_1 * \phi_{Q_1, 1}^{n_1})(x)(f_2 * \phi_{Q_2, 2}^{n_2})(x)(f_3 * \phi_{Q_3, 3}^{n_3})(x)(f_4 * \phi_{Q_4, 4})(x)dx|
\end{multline*}

\noindent Here the meaning of $\phi_{Q_i, i}^{n_i}$ is that if $\phi_{Q_i, i}$ was an $L^1$ normalized wave packet on $I_{\vec{Q}}\times Q_i$ then $\phi_{Q_i, i}^{n_i}$ is an $L^1$ normalized wave packet on $I_{\vec{Q}}^{n_i}\times Q_i$ where $I_{\vec{Q}}^{n_i}$ is a dyadic interval sitting $n_i$ units of length $|I_{\vec{Q}}|$ away from $I_{\vec{Q}}$.

Split $\vec{\mathbf{Q}} = \bigcup_{k\in\mathbb{Z}}\vec{\mathbf{Q}}_k$ where $\vec{\mathbf{Q}}_k$ has cubes $\vec{Q}$ of scale $|\vec{Q}| = 2^k$ and thus $|I_{\vec{Q}}| = 2^{-k}$.

\begin{multline}\label{discr}
\sum\limits_{\vec{Q} \in \vec{\mathbf{Q}}}|\int_{\mathbb{R}}(f_1 * \phi_{Q_1, 1}^{n_1})(x)(f_2 * \phi_{Q_2, 2}^{n_2})(x)(f_3 * \phi_{Q_3, 3}^{n_3})(x)(f_4 * \phi_{Q_4, 4})(x)dx| \\
= \sum\limits_{k\in\mathbb{Z}}\sum\limits_{\vec{Q} \in \vec{\mathbf{Q}}_k}|2^{-k}\int_{\mathbb{R}}(f_1 * \phi_{Q_1, 1}^{n_1})(2^{-k}y)(f_2 * \phi_{Q_2, 2}^{n_2})(2^{-k}y)(f_3 * \phi_{Q_3, 3}^{n_3})(2^{-k}y)(f_4 * \phi_{Q_4, 4})(2^{-k}y)dy| \\
= \sum\limits_{k\in\mathbb{Z}}\sum\limits_{\vec{Q} \in \vec{\mathbf{Q}}_k}\Bigl\lvert|I_{\vec{Q}}|\int_{0}^{1}\sum\limits_{m\in\mathbb{Z}}(f_1 * \phi_{Q_1, 1}^{n_1})(2^{-k}m+2^{-k}\gamma)(f_2 * \phi_{Q_2, 2}^{n_2})(2^{-k}m + 2^{-k}\gamma)\\
(f_3 * \phi_{Q_3, 3}^{n_3})(2^{-k}m + 2^{-k}\gamma)(f_4 * \phi_{Q_4, 4})(2^{-k}m + 2^{-k}\gamma)d\gamma\Bigr\rvert
\end{multline}

\noindent Now observe that for $i=1,2,3,4$ (where we take $n_4 = 0$)

\begin{align*}
(f_i * \phi_{Q_i, i}^{n_i})(2^{-k}m + 2^{-k}\gamma) &= \int_{\mathbb{R}}f_i(z)\phi_{Q_i, i}^{n_i}(2^{-k}m + 2^{-k}\gamma - z)dz \\
&= \frac{1}{|I_{\vec{Q}}|^{1/2}}\int_{\mathbb{R}}f_i(z)|I_{\vec{Q}}|^{1/2}\phi_{Q_i, i}^{n_i}(2^{-k}m + 2^{-k}\gamma - z)dz \\
&= \frac{1}{|I_{\vec{Q}}|^{1/2}}\langle f_i , \tilde{\phi}_{Q_i, i, m, \gamma}^{n_i} \rangle
\end{align*}

\noindent where $\tilde{\phi}_{Q_i, i, m, \gamma}^{n_i}$ is a wave packet translated from $\phi_{Q_i, i}^{n_i}$ by $m$ steps in time and then additionally shifted by $\gamma$ steps. Note that $\tilde{\phi}_{Q_i, i, m, \gamma}^{n_i}$ is an $L^2$ normalized wave packet since $\phi_{Q_i, i}^{n_i}$ was $L^1$ normalized. Now \eqref{discr} becomes

\begin{multline*}
\int_{0}^{1}\sum\limits_{\vec{Q} \in \vec{\mathbf{Q}}}\sum\limits_{m\in\mathbb{Z}}|I_{\vec{Q}}|\frac{1}{|I_{\vec{Q}}|^{1/2}}\langle f_1 , \tilde{\phi}_{Q_1, 1, m, \gamma}^{n_1} \rangle \frac{1}{|I_{\vec{Q}}|^{1/2}}\langle f_2 , \tilde{\phi}_{Q_2, 2, m, \gamma}^{n_2} \rangle \frac{1}{|I_{\vec{Q}}|^{1/2}}\langle f_3 , \tilde{\phi}_{Q_3, 3, m, \gamma}^{n_3} \rangle \\
\frac{1}{|I_{\vec{Q}}|^{1/2}}\langle f_4 , \tilde{\phi}_{Q_4, 4, m, \gamma} \rangle d\gamma \\
= \int_{0}^{1}\sum\limits_{\vec{P} \in \vec{\mathbf{P}}}\frac{1}{|I_{\vec{P}}|}\langle f_1 , \tilde{\phi}_{P_1^{n_1}, 1, \gamma} \rangle \langle f_2 , \tilde{\phi}_{P_2^{n_2}, 2, \gamma} \rangle \langle f_3 , \tilde{\phi}_{P_3^{n_3}, 3, \gamma} \rangle \langle f_4 , \tilde{\phi}_{P_4, 4, \gamma} \rangle d\gamma
\end{multline*}

\noindent where $P_i^{n_i}$ denotes the tile $I_{P_i}^{n_i+m} \times Q_i$ where $I_{P_i}^{n_i+m}$ is a dyadic interval such that $|I_{P_i}^{n_i+m}| \sim |Q_i|^{-1}$ for $i=1,2,3,4$ (again we have $n_4=0$). Again then $I_{P_i}^{n_i+m}$ sits $n_i+m$ units of length $|I_{\vec{P}}|$ away from $I_{P_i}$.

If we now fix $n_1, n_2, n_3 \in \mathbb{Z}$ and $\gamma \in [0,1]$ then it is sufficient to study estimates for the following discrete variant of \eqref{form}

\begin{equation*}
\sum\limits_{\vec{P} \in \vec{\mathbf{P}}}\frac{1}{|I_{\vec{P}}|}\langle f_1 , \phi_{P_1^{n_1}, 1} \rangle \langle f_2 , \phi_{P_2^{n_2}, 2} \rangle \langle f_3 , \phi_{P_3^{n_3}, 3} \rangle \langle f_4 , \phi_{P_4, 4} \rangle
\end{equation*}

\noindent Write

\begin{equation}\label{discr_form}
\Lambda_{\vec{\mathbf{P}}}(f_1,f_2,f_3,f_4) := \sum\limits_{\vec{P} \in \vec{\mathbf{P}}}\frac{1}{|I_{\vec{P}}|}\langle f_1 , \phi_{P_1^{n_1}, 1} \rangle \langle f_2 , \phi_{P_2^{n_2}, 2} \rangle \langle f_3 , \phi_{P_3^{n_3}, 3} \rangle \langle f_4 , \phi_{P_4, 4} \rangle
\end{equation}

\noindent and define $T_{\vec{\mathbf{P}}}(f_1,f_2,f_3)$ with

\begin{equation*}
\langle T_{\vec{\mathbf{P}}}(f_1,f_2,f_3), f_4 \rangle = \Lambda_{\vec{\mathbf{P}}}(f_1,f_2,f_3,f_4)
\end{equation*}

To compare our quadtiles with the tiles one faces in the bilinear Hilbert transform then notice that if $\vec{P} = (P_1, P_2, P_3, P_4)$ then $P_1$ is like a paraproduct tile, $P_2$ and $P_3$ might at a first glance seem just as in the bilinear Hilbert transform and $P_4$ is essentially as in the bilinear Hilbert transform, just potentially translated a bit in frequency by $P_1$. Note that the constant in the definition of $\lesssim'$ is $5$ as opposed to $3$ in \cite{MTT2}. We choose a bigger constant to make up for this extra possible translation of $P_4$. In the next section we will see in which cases we are essentially as in the bilinear Hilbert transform case, and in which cases we have to be more careful.

\section{Rank $(1,0)$}\label{rank}

Recall a standard definition of rank \cite{MTT2}.

\begin{definition}
A collection $\vec{\mathbf{P}}$ of quadtiles is said to have rank $1$ if one has the following properties for all $\vec{P}, \vec{P}' \in \vec{\mathbf{P}}$:
\begin{itemize}
\item If $\vec{P} \neq \vec{P}'$, then $P_j \neq P_j'$ for all j=1,2,3,4.
\item If $P_{j}' \leq P_j$ for some $j=1,2,3,4$, then $P_{i}' \lesssim P_i$ for all $1 \leq i \leq 4$.
\item If we further assume that $10^9|I_{\vec{P}'}| < |I_{\vec{P}}|$, then we have $P_{i}' \lesssim' P_i$ for all $i\neq j$.
\end{itemize}
\end{definition}

\noindent This definition does not work for our collection of quadtiles because the paraproduct tile $P_1$ does not uniquely determine the other three tiles.

We only need a frequency or time interval from one of our tiles to determine $P_1$, while we need a whole tile $P_j$, $j=2,3\text{ or }4$, to determine the other three. Motivated by this fact and what ingredients are really important in a rank definition \cite{MTT4} we give the following definition.

\begin{definition}
Let $\lbrace i_1, i_2, i_3, i_4 \rbrace$ be some rearrangement of $\lbrace 1,2,3,4 \rbrace$. A collection $\vec{\mathbf{P}}$ of quadtiles is said to have rank $(1,0)$ with respect to $\lbrace \lbrace i_1, i_2, i_3 \rbrace, \lbrace i_4 \rbrace \rbrace$ if one has the following properties for all $\vec{P}, \vec{P}' \in \vec{\mathbf{P}}$:
\begin{itemize}
\item If $\vec{P} \neq \vec{P}'$, then $P_{i_j} \neq P_{i_j}'$ for all j=1,2,3 and if $I_{\vec{P}} = I_{\vec{P}'}$ then $P_{i_4} = P_{i_4}'$.
\item If $P_{i_{j}}' \leq P_{i_j}$ for some $j=1,2,3$, then $P_{i_{k}}' \lesssim P_{i_k}$ for all $1 \leq k \leq 4$.
\item If we further assume that $10^9|I_{\vec{P}'}| < |I_{\vec{P}}|$, then there exist at least two indices
$$\tau_1(i_j), \tau_2(i_j) \in\lbrace 1,2,3,4\rbrace\setminus\lbrace i_j\rbrace,\quad \tau_1(i_j)\neq \tau_2(i_j)$$
such that we have $P_{\tau_1(i_j)}' \lesssim' P_{\tau_1(i_j)}$ and $P_{\tau_2(i_j)}' \lesssim' P_{\tau_2(i_j)}$. We call those indices good indices with respect to $i_j$ and note that there might be up to three of them. Here we understand $P_{i_4}' \lesssim' P_{i_4}$ to mean $\omega_{P_{i_4}'} \cap \omega_{P_{i_4}} = \emptyset$.
\end{itemize}
\end{definition}

Note that the orderings $\leq$ and $\lesssim'$ do not make sense for our paraproduct tiles because we have the relation $\leq$ between any two such tiles and thus $\lesssim'$ never happens. These orderings work well on the bilinear Hilbert transform type tiles where flexibility is helpful. We have to be more exact with the paraproduct tiles and thus understand the relation $\leq$ to mean that the paraproduct tiles intersect in frequency while $\lesssim'$ means that they don't intersect. 

It is not hard to see that our collection of quadtiles is rank $(1,0)$ with respect to $\lbrace \lbrace 2,3,4 \rbrace, \lbrace 1 \rbrace \rbrace$ where a collection corresponds to exactly one of the three cases we have. The first and second conditions are clearly fulfilled since knowing one of the bilinear Hilbert transform tiles gives us complete information about all the other tiles and since the paraproduct tile is completely determined by the time interval. Modulo a finite refinement of our collection we can also see that the last condition is fulfilled.

Assume we are in the case \eqref{case1} and that we have $10^9|I_{\vec{P}'}| < |I_{\vec{P}}|$ and $P_{2}' \leq P_{2}$. We cannot guarantee that $P_{3}' \leq P_{3}$ since $P_{2}$ and $P_{3}$ are essentially the same tile and similarly for $P_{2}'$ and $P_{3}'$. However $10^9|I_{\vec{P}'}| < |I_{\vec{P}}|$ guarantees that $\omega_{P_{1}'} \cap \omega_{P_{1}} = \emptyset$ which along with the previous observation also guarantees that $P_{4}' \lesssim' P_{4}$. The other possibilities in this case go somewhat similarly. This particular example shows how critical the paraproduct tile is in our analysis.

In the case \eqref{case2} then $P_{1}$ has minimal effect so we are essentially in the bilinear Hilbert case so all the conditions above are fulfilled.

Assume we are in the case \eqref{case3} and that we have $10^9|I_{\vec{P}'}| < |I_{\vec{P}}|$ and $P_{4}' \leq P_{4}$. We claim that $P_{2}' \lesssim' P_{2}$ and $P_{3}' \lesssim' P_{3}$ so let us assume for contradiction that $P_{2}' \leq P_{2}$. The distance between the centers of the frequency supports of $P_{1}$ and $P_{1}'$ is roughly $|\omega_{P_{1}'}| - |\omega_{P_{1}}| < |\omega_{P_{1}'}|$ which means, since $P_{2}' \leq P_{2}$ and $P_{4}' \leq P_{4}$, that the distance between the centers of the frequency supports of $P_{3}$ and $P_{3}'$ is at most $|\omega_{P_{1}'}|$ which gives $P_{3}' \leq P_{3}$. This must be a contradiction and thus we have $P_{2}' \lesssim' P_{2}$ and $P_{3}' \lesssim' P_{3}$. The other possibilities in this case go somewhat similarly.

\section{Fourier Coefficient}

Recall from \eqref{fourier_coeff} that the Fourier coefficient $C_{n_1,n_2,n_3}^{\vec{Q}}$ is given by

\begin{multline*}
C_{n_1, n_2, n_3}^{\vec{Q}} = \frac{1}{|\vec{Q}|^4} \int_{\mathbb{R}^3}\frac{ \left[\int_{0}^{1}1_{\mathbb{R}_{+}}(\alpha\xi_1 \! + \! \beta\xi_2 \! + \! \xi_3)d\alpha\right]}{a(\xi_1, \xi_2, \xi_3)}\widehat{\tilde{\phi}_{Q_1, 1}}(\xi_1)\widehat{\tilde{\phi}_{Q_2, 2}}(\xi_2)\widehat{\tilde{\phi}_{Q_3, 3}}(\xi_3) \\
e^{-2\pi i \frac{n_1}{|\vec{Q}|}\xi_1}e^{-2\pi i \frac{n_2}{|\vec{Q}|}\xi_2}e^{-2\pi i \frac{n_3}{|\vec{Q}|}\xi_3} d\xi_1 d\xi_2 d\xi_3
\end{multline*}

Change variables and obtain

\begin{multline*}
C_{n_1, n_2, n_3}^{\vec{Q}} = \int_{\mathbb{R}^3}\left[\int_{0}^{1}1_{\mathbb{R}_{+}}(\alpha\xi_1 \! + \! \beta\xi_2 \! + \! \xi_3)d\alpha\right]\frac{\widehat{\phi_1}(\xi_1)\widehat{\phi_2}(\xi_2)\widehat{\phi_3}(\xi_3)}{\tilde{a}(\xi_1, \xi_2, \xi_3)}e^{-2\pi i n_1 \xi_1} \\
e^{-2\pi i n_2 \xi_2}e^{-2\pi i n_3 \xi_3} d\xi_1 d\xi_2 d\xi_3
\end{multline*}

\noindent where $\widehat{\phi_i}(\xi_i) = \widehat{\tilde{\phi}_{Q_1, 1}}(|\vec{Q}|\xi_1)$ is a bump that is of scale $1$ and $\tilde{a}(\xi_1, \xi_2, \xi_3) = a(|\vec{Q}|\xi_1, |\vec{Q}|\xi_2,|\vec{Q}|\xi_3)$ is also of scale $1$ on the support of $\widehat{\phi_1}(\xi_1)\widehat{\phi_2}(\xi_2)\widehat{\phi_3}(\xi_3)$. To see why the last statement is true we have to recall

\begin{equation*}
a(\xi_1, \xi_2, \xi_3) = \sum\limits_{\vec{\tilde{Q}} \in \vec{\mathbf{Q}}}\widehat{\phi_{\tilde{Q}_1, 1}}(\xi_1)\widehat{\phi_{\tilde{Q}_2, 2}}(\xi_2)\widehat{\phi_{\tilde{Q}_3, 3}}(\xi_3)
\end{equation*}

\noindent and split into cases based on \eqref{case1}, \eqref{case2} and \eqref{case3}. First note that for a term in

\begin{equation*}
\sum\limits_{\vec{\tilde{Q}} \in \vec{\mathbf{Q}}}\widehat{\phi_{\tilde{Q}_1, 1}}(\xi_1)\widehat{\phi_{\tilde{Q}_2, 2}}(\xi_2)\widehat{\phi_{\tilde{Q}_3, 3}}(\xi_3)
\end{equation*}

\noindent to contribute to the sum on the support of $\vec{Q}$ we must have $\tilde{Q}_i \cap Q_i \neq \emptyset$ for $i=1,2,3$.

Start with the cases \eqref{case1} and \eqref{case3}. For $\tilde{Q}_1 \cap Q_1 \neq \emptyset$ we must have $|\vec{\tilde{Q}}| \sim |\vec{Q}|$ because else $\widehat{\phi_{\tilde{Q}_1, 1}}$ and $\widehat{\phi_{Q_1, 1}}$ have disjoint supports.

The last case is \eqref{case2}. Assume we have $\vec{\tilde{Q}}$ and $\vec{Q}$ such that $\tilde{Q}_i \cap Q_i \neq \emptyset$ for $i=1,2,3$. Let's now for symmetry assume we have $|\vec{\tilde{Q}}| \ll |\vec{Q}|$. We are in the case where $Q_2$ and $Q_3$ are several units of length $|\vec{Q}|$ away from one another and $\tilde{Q}_2$ and $\tilde{Q}_3$ are several units of length $|\vec{\tilde{Q}}|$ away from one another. However if $\tilde{Q}_2 \cap Q_2 \neq \emptyset$ then we can't have $\tilde{Q}_3 \cap Q_3 \neq \emptyset$ which is a contradiction. Thus we must have $|\vec{\tilde{Q}}| \sim |\vec{Q}|$.

We now want to integrate by parts to obtain decay in $n_1$, $n_2$, $n_3$. We do not need to worry about derivatives hitting $\frac{\widehat{\phi_1}(\xi_1)\widehat{\phi_2}(\xi_2)\widehat{\phi_3}(\xi_3)}{\tilde{a}(\xi_1, \xi_2, \xi_3)}$ which is smooth and of scale $1$.

In the case \eqref{case2} we do not catch the planes where our symbol is continuous but not differentiable. In that case we can thus integrate by parts as often as we want and obtain as much decay in $n_1$, $n_2$ and $n_3$ as we want.

In the other cases, \eqref{case1} and \eqref{case2}, we might catch the planes where our symbol is merely continuous but in both cases we know that $Q_1$ is away from the origin. Thus we can write $C_{n_1, n_2, n_3}^{\vec{Q}}$ as

\begin{multline}\label{four_coeff_scale1}
\int_{\mathbb{R}^3}\left[\frac{1}{\xi_1}\int_{0}^{\xi_1}1_{\mathbb{R}_{+}}(\alpha \! + \! \beta\xi_2 \! + \! \xi_3)d\alpha\right]\frac{\widehat{\phi_1}(\xi_1)\widehat{\phi_2}(\xi_2)\widehat{\phi_3}(\xi_3)}{\tilde{a}(\xi_1, \xi_2, \xi_3)}e^{-2\pi i n_1 \xi_1} \\
e^{-2\pi i n_2 \xi_2}e^{-2\pi i n_3 \xi_3} d\xi_1 d\xi_2 d\xi_3 \\
= \int_{\mathbb{R}^3}\left[\int_{0}^{\xi_1}1_{\mathbb{R}_{+}}(\alpha \! + \! \beta\xi_2 \! + \! \xi_3)d\alpha\right]\frac{\widehat{\tilde{\phi}_1}(\xi_1)\widehat{\phi_2}(\xi_2)\widehat{\phi_3}(\xi_3)}{\tilde{a}(\xi_1, \xi_2, \xi_3)}e^{-2\pi i n_1 \xi_1} \\
e^{-2\pi i n_2 \xi_2}e^{-2\pi i n_3 \xi_3} d\xi_1 d\xi_2 d\xi_3
\end{multline}

\noindent where $\widehat{\tilde{\phi}_1}(\xi_1) = \frac{1}{\xi_1}\widehat{\phi_1}(\xi_1)$ is well defined and still smooth because $\xi_1$ is always away from zero. As in Muscalu's treatment of the symbol for the Calder\'{o}n commutator \cite{M1}, which has a non-standard symbol, we get the following lemmas.

\begin{lemma}\label{diff_symbol}
One has the following identities
\begin{itemize}
\item[a)] $\partial_{\xi_3}^{2}\left(\int_{0}^{\xi_1}1_{\mathbb{R}_{+}}(\alpha \! + \! \beta\xi_2 \! + \! \xi_3)d\alpha \right) = \delta_{0}(\xi_1 + \beta\xi_2 + \xi_3) - \delta_0(\beta\xi_2 + \xi_3)$
\item[b)] $\partial_{\xi_2}\partial_{\xi_3}\left(\int_{0}^{\xi_1}1_{\mathbb{R}_{+}}(\alpha \! + \! \beta\xi_2 \! + \! \xi_3)d\alpha \right) = \beta( \delta_{0}(\xi_1 + \beta\xi_2 + \xi_3) - \delta_0(\beta\xi_2 + \xi_3))$
\item[c)] $\partial_{\xi_1}\partial_{\xi_3}\left(\int_{0}^{\xi_1}1_{\mathbb{R}_{+}}(\alpha \! + \! \beta\xi_2 \! + \! \xi_3)d\alpha \right) = \delta_{0}(\xi_1 + \beta\xi_2 + \xi_3)$
\item[d)] $\partial_{\xi_2}^{2}\left(\int_{0}^{\xi_1}1_{\mathbb{R}_{+}}(\alpha \! + \! \beta\xi_2 \! + \! \xi_3)d\alpha \right) = \beta^2(\delta_{0}(\xi_1 + \beta\xi_2 + \xi_3) - \delta_0(\beta\xi_2 + \xi_3))$
\item[e)] $\partial_{\xi_1}\partial_{\xi_2}\left(\int_{0}^{\xi_1}1_{\mathbb{R}_{+}}(\alpha \! + \! \beta\xi_2 \! + \! \xi_3)d\alpha \right) = \beta \delta_{0}(\xi_1 + \beta\xi_2 + \xi_3)$
\item[f)] $\partial_{\xi_1}^{2}\left(\int_{0}^{\xi_1}1_{\mathbb{R}_{+}}(\alpha \! + \! \beta\xi_2 \! + \! \xi_3)d\alpha \right) = \delta_{0}(\xi_1 + \beta\xi_2 + \xi_3)$
\end{itemize}
\end{lemma}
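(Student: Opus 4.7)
The plan is to identify the symbol with an elementary antiderivative and then differentiate using the chain rule in the distributional sense. Specifically, performing the substitution $s = \alpha + \beta\xi_2 + \xi_3$ in the inner integral, one sees that
$$ \int_{0}^{\xi_1}1_{\mathbb{R}_{+}}(\alpha + \beta\xi_2 + \xi_3)\, d\alpha = H(\xi_1 + \beta\xi_2 + \xi_3) - H(\beta\xi_2 + \xi_3), $$
where $H(t) := \int_0^t 1_{\mathbb{R}_+}(s)\, ds = t^+ = \max(t,0)$ (the identity holds for both signs of $\xi_1$, since for $\xi_1 < 0$ the left side is interpreted as $-\int_{\xi_1}^0$, which matches the right side).

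Once we have this representation, every identity (a)--(f) reduces to the observation that $H$ is a function of a single real variable with $H'(t) = 1_{\mathbb{R}_+}(t)$ and $H''(t) = \delta_0(t)$ in the sense of distributions. First I would compute the three first-order partials
$$ \partial_{\xi_1}F = 1_{\mathbb{R}_+}(\xi_1 + \beta\xi_2 + \xi_3), $$
$$ \partial_{\xi_2}F = \beta\bigl(1_{\mathbb{R}_+}(\xi_1 + \beta\xi_2 + \xi_3) - 1_{\mathbb{R}_+}(\beta\xi_2 + \xi_3)\bigr), $$
$$ \partial_{\xi_3}F = 1_{\mathbb{R}_+}(\xi_1 + \beta\xi_2 + \xi_3) - 1_{\mathbb{R}_+}(\beta\xi_2 + \xi_3), $$
by straightforward chain rule (noting that the second $H$-term does not depend on $\xi_1$, which is the reason only one Dirac mass appears in identities (c), (e), (f)). Then differentiating each of these once more gives all six formulas immediately; for instance, (c) comes from $\partial_{\xi_1}\partial_{\xi_3}F = H''(\xi_1+\beta\xi_2+\xi_3) = \delta_0(\xi_1+\beta\xi_2+\xi_3)$, and the $\beta$ and $\beta^2$ factors in (b), (d), (e) are produced by the factor $\beta$ in the $\xi_2$-derivative of the argument.

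There is no real obstacle here beyond bookkeeping: the only subtlety is that $H$ is only Lipschitz, so the second derivatives must be interpreted distributionally. Since $F$ enters the Fourier coefficient \eqref{four_coeff_scale1} paired against a smooth compactly supported bump, one can justify the computation rigorously either by convolving $H$ with a mollifier and passing to the limit, or by integrating by parts in the integral defining $C_{n_1,n_2,n_3}^{\vec Q}$ and interpreting the resulting boundary/delta contributions as the pairings on the right-hand sides of (a)--(f). Either way, the identities follow.
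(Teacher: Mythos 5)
Your proof is correct and follows essentially the same route as the paper: the paper differentiates under the integral sign and then changes variables to land on the difference of Heaviside functions of $\xi_1+\beta\xi_2+\xi_3$ and $\beta\xi_2+\xi_3$, whereas you write the primitive $H(t)=\max(t,0)$ explicitly first, but both reduce to the same distributional chain-rule bookkeeping. All six identities check out.
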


\begin{proof}
This is straightforward. Let us verify a) for instance. One has

\begin{align*}
\partial_{\xi_3}^{2}\left(\int_{0}^{\xi_1}1_{\mathbb{R}_{+}}(\alpha \! + \! \beta\xi_2 \! + \! \xi_3)d\alpha \right) &= \partial_{\xi_3}\left(\int_{0}^{\xi_1}\delta_0(\alpha \! + \! \beta\xi_2 \! + \! \xi_3)d\alpha \right) \\
&=  \partial_{\xi_3}\left(\int_{\beta\xi_2 + \xi_3}^{\xi_1 + \beta\xi_2 + \xi_3}\delta_0(\alpha) d\alpha \right) \\
&= \delta_{0}(\xi_1 + \beta\xi_2 + \xi_3) - \delta_0(\beta\xi_2 + \xi_3)
\end{align*}
\end{proof}

\begin{lemma}\label{decay}
\begin{multline*}
|C_{n_1, n_2, n_3}^{\vec{Q}}| \lesssim c_{\beta}^1 \frac{1}{\langle n_3 \rangle^2}\cdot \frac{1}{\langle n_1 - n_3 \rangle^{M_2}} \cdot \frac{1}{\langle n_2 - \beta n_3 \rangle^{M_3}} + c_{\beta}^2 \frac{1}{\langle n_3 \rangle^2}\cdot \frac{1}{\langle n_1 \rangle^{M_2}} \cdot \frac{1}{\langle n_2 - \beta n_3 \rangle^{M_3}}\\
+ c_{\beta}^{3}\frac{1}{\langle n_3 \rangle^{M_1}}\cdot \frac{1}{\langle n_1 - \frac{n_2}{\beta} \rangle^{M_2}} \cdot \frac{1}{\langle n_3 - \frac{n_2}{\beta} \rangle^{M_3}} + c_{\beta}^{4}\frac{1}{\langle n_3 \rangle^{M_1}}\cdot \frac{1}{\langle n_1 \rangle^{M_2}} \cdot \frac{1}{\langle n_3 - \frac{n_2}{\beta} \rangle^{M_3}} \\
+ c_{\beta}^{5}\frac{1}{\langle n_3 \rangle^{M_1}}\cdot \frac{1}{\langle n_2 \rangle^{M_2}} \cdot \frac{1}{\langle n_3 - n_1 \rangle^{M_3}}\cdot \frac{1}{\langle n_2 - \beta n_1 \rangle^{M_4}} + c_{\beta}^{6}\frac{1}{\langle n_3 \rangle^{M_1}}\cdot \frac{1}{\langle n_2 \rangle^{M_2}} \cdot \frac{1}{\langle n_1 \rangle^{M_3}}
\end{multline*}

\noindent where $\langle n \rangle := 2 + |n|$ and $M_1$, $M_2$, $M_3$, $M_4$ are fixed large integers and $c_{\beta}^{1}, \ldots , c_{\beta}^{6}$ are constants that only depend on $\beta$.
\end{lemma}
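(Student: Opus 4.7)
The plan is to work with the rescaled formula \eqref{four_coeff_scale1}, in which the smooth bumps $\widehat{\tilde\phi_1},\widehat{\phi_2},\widehat{\phi_3}$ and the singular factor $F(\xi):=\int_0^{\xi_1}1_{\mathbb{R}_+}(\alpha+\beta\xi_2+\xi_3)\,d\alpha$ all live at unit scale, and to extract decay in $(n_1,n_2,n_3)$ by repeated integration by parts against the oscillatory exponential $e^{-2\pi i(n_1\xi_1+n_2\xi_2+n_3\xi_3)}$. The obstruction is the limited smoothness of $F$: first derivatives of $F$ are only bounded step functions, and Lemma \ref{diff_symbol} shows that second derivatives consist of Dirac masses on the planes $\Pi_1:\xi_1+\beta\xi_2+\xi_3=0$ and $\Pi_2:\beta\xi_2+\xi_3=0$. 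This is exactly the structure that arises in Muscalu's treatment of the Calder\'on commutator symbol \cite{M1}, and the strategy here parallels it.

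In case \eqref{case2} the planes $\Pi_1,\Pi_2$ stay away from the support of $\widehat{\phi_1}\otimes\widehat{\phi_2}\otimes\widehat{\phi_3}$, so the integrand is genuinely smooth and iterating IBP in each $\xi_i$ separately yields the decay $\langle n_1\rangle^{-M_3}\langle n_2\rangle^{-M_2}\langle n_3\rangle^{-M_1}$, which is the sixth term. In cases \eqref{case1} and \eqref{case3} the plan is to perform exactly two IBPs in a chosen pair of frequency variables. If either derivative lands on the smooth factor $\widehat{\tilde\phi_1}\widehat{\phi_2}\widehat{\phi_3}/\tilde a$, the integrand produced is smooth and contributes to the sixth term after further IBP. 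If both derivatives strike $F$, Lemma \ref{diff_symbol} expresses the outcome as $\delta_0$ on $\Pi_1$ and/or $\Pi_2$, collapsing the three-dimensional integral to a two-dimensional integral on a plane with linear phase; since the singular factor is now resolved, the residual integrand on the plane is smooth in the two parametrizing variables and IBP in those directions is unobstructed and can be iterated arbitrarily many times.

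Matching IBP schemes to the six terms then becomes a bookkeeping exercise. Differentiating twice in $\xi_3$ (Lemma \ref{diff_symbol}(a)) produces deltas on both planes; parametrizing by $(\xi_1,\xi_2)$ yields linear phases $(n_1-n_3)\xi_1+(n_2-\beta n_3)\xi_2$ on $\Pi_1$ and $n_1\xi_1+(n_2-\beta n_3)\xi_2$ on $\Pi_2$, so further IBP in $\xi_1,\xi_2$ gives terms 1 and 2 with the $\langle n_3\rangle^{-2}$ prefactor from the initial two IBPs. I expect terms 3 and 4 to arise analogously from differentiating twice in $\xi_2$ (Lemma \ref{diff_symbol}(d)) and parametrizing $\Pi_1$ and $\Pi_2$ by $(\xi_1,\xi_3)$, giving the phases $(n_1-n_2/\beta)\xi_1+(n_3-n_2/\beta)\xi_3$ and $n_1\xi_1+(n_3-n_2/\beta)\xi_3$; the large $\langle n_3\rangle^{-M_1}$ factor there is produced by first iterating IBP in $\xi_3$ on the piecewise-linear pieces of $F$ (on which $\partial_{\xi_3}^2 F=0$) before triggering the delta via the two $\xi_2$ IBPs. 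Term 5 corresponds to two IBPs in $\xi_1$ (Lemma \ref{diff_symbol}(f), which activates only $\Pi_1$), parametrizing by $(\xi_2,\xi_3)$ to give phase $(n_2-\beta n_1)\xi_2+(n_3-n_1)\xi_3$, again combined with independent prior IBP in $\xi_2,\xi_3$. The powers of $\beta$ coming from the chain rule in Lemma \ref{diff_symbol} are collected into $c_\beta^1,\ldots,c_\beta^6$.

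The main obstacle is precisely this bookkeeping: justifying each IBP scheme rigorously, verifying that $\xi_1$ is bounded away from $0$ on the support in cases \eqref{case1} and \eqref{case3} (so that the factor $1/\xi_1$ absorbed into $\widehat{\tilde\phi_1}$ in passing to \eqref{four_coeff_scale1} is harmless), and checking that the convention $\langle n\rangle=2+|n|$ absorbs the trivial bound whenever a denominator would otherwise be small. Once the combinatorics of the IBP configurations are set up, each individual term follows from the standard non-stationary phase principle and the claim assembles by summing the contributions.
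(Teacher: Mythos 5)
Your proposal follows essentially the same route as the paper: rescale to the unit-scale formula \eqref{four_coeff_scale1}, integrate by parts against the linear phase, invoke Lemma \ref{diff_symbol} when the symbol is hit twice so that the integral collapses onto one of the two planes with exactly the collapsed phases you compute, and organize the surviving contributions by which variable finally completes the second hit; your matching of IBP configurations to the six terms (including the $\langle n_3 \rangle^{-2}$ versus $\langle n_3 \rangle^{-M_1}$ prefactors) agrees with the paper's bookkeeping. The only imprecision is the dichotomy in your second paragraph --- a term in which one derivative hits $F$ and the other hits the smooth factor is not smooth (it still carries the step-function first derivative of $F$) and so does not feed into the sixth term --- but your third paragraph already handles these deferred terms correctly via the sequential $\xi_3 \to \xi_2 \to \xi_1$ scheme, which is exactly the paper's argument.
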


\begin{proof}
As mentioned before then this clearly holds in the case \eqref{case2} since then the symbol is smooth and we can integrate by parts as often as we want in the Fourier coefficient. In the other two cases \eqref{case1} and \eqref{case3} we must use lemma \ref{diff_symbol}. The idea is to integrate by parts in \eqref{four_coeff_scale1} in the $\xi_3$ variable as often as we can. Since both $\int_{0}^{\xi_1}1_{\mathbb{R}_{+}}(\alpha \! + \! \beta\xi_2 \! + \! \xi_3)d\alpha$ and $\frac{\widehat{\tilde{\phi}_1}(\xi_1)\widehat{\phi_2}(\xi_2)\widehat{\phi_3}(\xi_3)}{\tilde{a}(\xi_1, \xi_2, \xi_3)}$ depend on $\xi_3$ then derivatives can hit either of the terms. If the derivative hits the term $\int_{0}^{\xi_1}1_{\mathbb{R}_{+}}(\alpha \! + \! \beta\xi_2 \! + \! \xi_3)d\alpha$ twice then because of lemma \ref{diff_symbol} the $\xi_3$ variable disappears and \eqref{four_coeff_scale1} collapses to

\begin{multline*}
\int_{\mathbb{R}^2}\frac{\widehat{\tilde{\phi}_1}(\xi_1)\widehat{\phi_2}(\xi_2)\widehat{\phi_3}(-\xi_1 - \beta\xi_2)}{\tilde{a}(\xi_1, \xi_2, -\xi_1 - \beta\xi_2)}e^{-2\pi i (n_1 - n_3) \xi_1}
e^{-2\pi i (n_2 - \beta n_3) \xi_2} d\xi_1 d\xi_2 \\
- \int_{\mathbb{R}^2}\frac{\widehat{\tilde{\phi}_1}(\xi_1)\widehat{\phi_2}(\xi_2)\widehat{\phi_3}(-\beta\xi_2)}{\tilde{a}(\xi_1, \xi_2, -\xi_1 - \beta\xi_2)}e^{-2\pi i n_1 \xi_1}
e^{-2\pi i (n_2 - \beta n_3) \xi_2} d\xi_1 d\xi_2
\end{multline*}

The integrands in both those terms are smooth and can be integrated by parts as many times as we wish and all the derivatives are compactly supported on scale $1$. This explains the appearance of the first two terms in the estimate for $C_{n_1, n_2, n_3}^{\vec{Q}}$.

If however the $\xi_3$ derivative didn't hit the term $\int_{0}^{\xi_1}1_{\mathbb{R}_{+}}(\alpha \! + \! \beta\xi_2 \! + \! \xi_3)d\alpha$ two times, even after running the procedure many times, this means that we already gained a factor of the type $\frac{1}{\langle n_3 \rangle^{M_1}}$, at which point we stop integrating by parts in $\xi_3$ and start integrating by parts in $\xi_2$. If $\xi_2$ derivatives hit $\int_{0}^{\xi_1}1_{\mathbb{R}_{+}}(\alpha \! + \! \beta\xi_2 \! + \! \xi_3)d\alpha$ we face two possible cases, we either end up with  $\partial_{\xi_2}\partial_{\xi_3}\left(\int_{0}^{\xi_1}1_{\mathbb{R}_{+}}(\alpha \! + \! \beta\xi_2 \! + \! \xi_3)d\alpha \right)$ or $\partial_{\xi_2}^{2}\left(\int_{0}^{\xi_1}1_{\mathbb{R}_{+}}(\alpha \! + \! \beta\xi_2 \! + \! \xi_3)d\alpha \right)$. Using lemma \ref{diff_symbol} then the integral collapses as in the first case, that is $\xi_2$ becomes $-\frac{\xi_3 + \xi_1}{\beta}$ or $-\frac{\xi_3}{\beta}$. After that we are, as before, integrating by parts a smooth function, obtaining an upper bound that explains the appearance of the third and fourth terms in the estimate for $C_{n_1, n_2, n_3}^{\vec{Q}}$.

If however $\int_{0}^{\xi_1}1_{\mathbb{R}_{+}}(\alpha \! + \! \beta\xi_2 \! + \! \xi_3)d\alpha$ has not been hit two times by some combination of $\xi_3$ and $\xi_2$ derivatives after running the procedure many times, this means that we have already gained a factor of the type $\frac{1}{\langle n_3 \rangle^{M_1}} \cdot \frac{1}{\langle n_2 \rangle^{M_2}}$ at which point we stop integrating by parts in $\xi_2$ and start integrating by parts in $\xi_1$. If $\xi_1$ derivatives hit $\int_{0}^{\xi_1}1_{\mathbb{R}_{+}}(\alpha \! + \! \beta\xi_2 \! + \! \xi_3)d\alpha$ we face three possible cases, we end up with $\partial_{\xi_1}\partial_{\xi_3}\left(\int_{0}^{\xi_1}1_{\mathbb{R}_{+}}(\alpha \! + \! \beta\xi_2 \! + \! \xi_3)d\alpha \right)$, $\partial_{\xi_1}\partial_{\xi_2}\left(\int_{0}^{\xi_1}1_{\mathbb{R}_{+}}(\alpha \! + \! \beta\xi_2 \! + \! \xi_3)d\alpha \right)$ or $\partial_{\xi_1}^{2}\left(\int_{0}^{\xi_1}1_{\mathbb{R}_{+}}(\alpha \! + \! \beta\xi_2 \! + \! \xi_3)d\alpha \right)$. Using lemma \ref{diff_symbol} the integral collapses as before, that is $\xi_1$ becomes $-\beta\xi_2 - \xi_3$. After that we are, as before, integrating by parts a smooth function, obtaining an upper bound that explains the appearance of the fifth term in the estimate for $C_{n_1, n_2, n_3}^{\vec{Q}}$.

Last but not least, if no combination of $\xi_1$, $\xi_2$ or $\xi_3$ derivatives hits $\int_{0}^{\xi_1}1_{\mathbb{R}_{+}}(\alpha \! + \! \beta\xi_2 \! + \! \xi_3)d\alpha$ twice then this means that the derivatives keep hitting the smooth function in which case we obtain an upper bound that explains the appearance of the last term in the estimate for $C_{n_1, n_2, n_3}^{\vec{Q}}$.

\end{proof}

\section{Discrete Operator}

Let now $\vec{\mathbf{P}}$ be a finite collection of multitiles which is sparse and has rank $(1,0)$. Consider also wave packets $(\phi_{P_{j}^{n_j}, j})_{\vec{P} \in \vec{\mathbf{P}}}$ for $j=1,2,3,4$ adapted to the tiles $P_{j}^{n_j}$ respectively as before where $n_1, n_2 \text{ and } n_3$ are fixed and $n_4=0$. Assume also that they are all $L^2$-normalized. The following theorem will be proven in detail in section 9.

\begin{theorem}\label{rest_weak_type}
Let $\gamma_1$ and $\gamma_3$ be positive numbers, smaller than $1$ but very close to $1$, $\gamma_2$ be a positive number smaller than $\frac{1}{2}$ but very close to $\frac{1}{2}$. Let also $E_1$, $E_2$, $E_3$, $E_4 \subseteq \mathbb{R}$ be measurable sets of finite measure. Then there exists $E_{4}' \subseteq E_4$ with $|E_{4}'| \sim |E_4|$ such that for every $|f_1| \lesssim 1_{E_1}$, $|f_2| \lesssim 1_{E_2}$, $|f_3| \lesssim 1_{E_3}$ one has

\begin{equation}
\Bigl\lvert \int_{\mathbb{R}}T_{\vec{\mathbf{P}}}(f_1,f_2,f_3)(x) 1_{E_{4}'}(x) dx \Bigr\rvert \lesssim \left(\prod\limits_{j=1}^{3}|\log_2(\langle n_j \rangle )|^4 \right)|E_1|^{\gamma_1}|E_2|^{\gamma_2}|E_3|^{\gamma_3}|E_4|^{\gamma_4}
\end{equation}

\noindent where $\gamma_4$ is defined by $\gamma_1 + \gamma_2 + \gamma_3 + \gamma_4 = 1$. Moreover the implicit constant is independent of the cardinality of $\vec{\mathbf{P}}$.
\end{theorem}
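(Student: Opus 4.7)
The plan is to follow the restricted weak-type framework of Muscalu-Tao-Thiele \cite{MTT2}, adapted to the rank $(1,0)$ geometry identified in Section \ref{rank} and to the $\vec n = (n_1,n_2,n_3)$ shift of the wave packets. By homogeneity it suffices to take $|f_j| \leq 1_{E_j}$. The claimed $\gamma_j$ represent one corner of a multilinear Marcinkiewicz polytope for $T_{\beta}$: $\gamma_1,\gamma_3 \approx 1$ put the paraproduct index $1$ and one of the BHT indices at their almost-$L^1$ scaling, $\gamma_2 \approx 1/2$ puts the other BHT index at the bilinear Hilbert scaling, and $\gamma_4 \approx -3/2$ is absorbed by restricting $f_4$ to the good set $E_4'$.

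First, construct the exceptional set as a union of level sets
\[
\Omega := \bigcup_{j=1}^{3}\bigl\{M 1_{E_j} > K_j (|E_j|/|E_4|)^{\alpha_j}\bigr\}
\]
(together with analogous square-function level sets), where $\alpha_j \in \{1/2,1\}$ is dictated by which $\gamma_j$ is near $1/2$ versus near $1$, and set $E_4' := E_4 \setminus \Omega$. Choosing the $K_j$ sufficiently large makes $|E_4'| \sim |E_4|$ by the weak-$L^1$ bounds. Discard from $\vec{\mathbf{P}}$ every quadtile whose time interval $I_{\vec P}$ is contained in $\Omega$; the surviving tiles enjoy density bounds of the form $|I_{\vec P}\cap E_j|/|I_{\vec P}| \lesssim (|E_j|/|E_4|)^{\alpha_j}$ needed in the next step.

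Next, introduce sizes $\mathrm{size}_j$ and energies $\mathrm{energy}_j$ of the coefficient sequences $\{\langle f_j,\phi_{P_j^{n_j}}\rangle\}_{\vec P}$ following \cite{MTT2, M1}, and bound them, using the exceptional set and the wave-packet decay, by
\[
\mathrm{size}_j \lesssim |\log_2\langle n_j\rangle|\,(|E_j|/|E_4|)^{\alpha_j},\qquad \mathrm{energy}_j \lesssim |\log_2\langle n_j\rangle|\,|E_j|^{1/2},\qquad \mathrm{size}_4 \lesssim 1.
\]
The logarithmic losses come by Peter-Paul from integrating the polynomial-decay tails of the $n_j$-translated wave packets against the $\tilde\chi_{I_{\vec P}}$ factors on $\mathbb{R}\setminus\Omega$, and the power $4$ in the final bound accommodates the cumulative loss through the size and energy estimates at each of the three shifted indices. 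Prove the rank $(1,0)$ tree lemma
\[
\sum_{\vec P \in T}\frac{|\langle f_1,\phi_{P_1^{n_1}}\rangle\langle f_2,\phi_{P_2^{n_2}}\rangle\langle f_3,\phi_{P_3^{n_3}}\rangle\langle f_4,\phi_{P_4}\rangle|}{|I_{\vec P}|} \lesssim |I_T|\prod_{j=1}^{4}\mathrm{size}_j(T),
\]
then decompose $\vec{\mathbf{P}}$ into trees of controlled top-measure $\lesssim \mathrm{energy}_j^2/\mathrm{size}_j^2$ via the standard greedy stopping-time algorithm applied to each index in turn, and sum to obtain the generalised H\"older-type bound
\[
|\Lambda_{\vec{\mathbf{P}}}(f_1,f_2,f_3,f_4)| \lesssim \prod_{j=1}^{4}\mathrm{size}_j^{1-\theta_j}\mathrm{energy}_j^{\theta_j}
\]
for any admissible exponents $\theta_j \in (0,1)$ summing to $2$. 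Choosing the $\theta_j$ close to the appropriate endpoints (forcing bilinear-Hilbert scaling at $j=2$ and paraproduct scaling at $j=1,3$) and inserting the size/energy estimates reproduces the target $|E_j|^{\gamma_j}$ factors along with the asserted $\prod_{j=1}^{3}|\log_2\langle n_j\rangle|^4$.

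The main obstacle is the tree lemma and stopping-time decomposition under rank $(1,0)$. Because the paraproduct tile $P_1$ does not uniquely determine a quadtile and the orderings $\leq, \lesssim'$ on $P_1$ collapse to mere intersection or disjointness of $\omega_{P_1}$, the MTT rank $1$ machinery does not apply verbatim. I would instead organise each tree around a bilinear-Hilbert-type top tile (one of $j=2,3,4$) and carry the paraproduct frequency datum of $P_1$ as an auxiliary label; the two-good-indices clause in the rank $(1,0)$ definition supplies enough $\lesssim'$-disjointness among the non-top indices to recover the Bessel and $TT^*$ arguments used in the size/energy bounds. Handling the three geometric regimes \eqref{case1}, \eqref{case2}, \eqref{case3} in parallel, in which the BHT-like configuration of $P_2,P_3$ relative to the paraproduct $P_1$ differs, is the combinatorial heart of the argument.
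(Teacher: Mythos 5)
Your overall framework (exceptional set, size/energy, tree lemma, stopping time, then optimizing exponents) is the one the paper uses, but two of your steps would fail as written. First, your exceptional set is built from the ordinary Hardy--Littlewood maximal function $M1_{E_j}$, and you propose to recover the effect of the shift $n_j$ by ``Peter--Paul'' integration of wave-packet tails against $\tilde{\chi}_{I_{\vec{P}}}$ on $\mathbb{R}\setminus\Omega$. This does not work: the translated packet $\phi_{P_j^{n_j},j}$ is \emph{centered} on $I_{\vec{P}}^{n_j}$, which sits $n_j$ units of length $|I_{\vec{P}}|$ away from $I_{\vec{P}}$, so there is no decay factor in $n_j$ to exploit, and knowing that $M1_{E_j}$ is small on $I_{\vec{P}}$ says nothing about the mass of $E_j$ near $I_{\vec{P}}^{n_j}$ (indeed $M^{n}f$ is not pointwise dominated by $\log_2(\langle n\rangle)\,Mf$). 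The paper instead builds $\Omega$ from the \emph{shifted} dyadic maximal operators $M^{n_j}$ and proves separately (Lemma \ref{shifted_max}) that $\|M^{n}\|$ is $O(\log_2(\langle n\rangle))$; the logarithmic losses in the size and energy estimates then come from this operator bound and from the fact that a translated tree lives on $O(\log_2(\langle n_j\rangle))$ ``friend'' intervals of $I_T$, not from tail integration.

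Second, your abstract inequality $|\Lambda_{\vec{\mathbf{P}}}|\lesssim\prod_{j=1}^{4}\mathrm{size}_j^{1-\theta_j}\mathrm{energy}_j^{\theta_j}$ with all $\theta_j\in(0,1)$ requires an energy estimate at the paraproduct index $j=1$. No such quantity is available: as the paper notes, any two paraproduct tiles satisfy $P_1'\leq P_1$, so strongly $1$-disjoint trees do not exist and the Bessel/$TT^{*}$ argument underlying the energy bound has nothing to run on. The paper's Proposition \ref{size_energy_est} is deliberately asymmetric: $\mathrm{size}_1$ enters to the full first power, and the interpolation $\mathrm{size}_j^{\theta_j}\mathrm{energy}_j^{1-\theta_j}$ with $\theta_2+\theta_3+\theta_4=1$ is carried out only over the three bilinear-Hilbert-type indices (this coincides with the $\theta_1=0$ endpoint of your formula, which your hypotheses exclude). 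Your choice of exponents at the end ($\gamma_1,\gamma_3$ near $1$, $\gamma_2$ near $\tfrac12$) is the right one, and your discussion of the rank $(1,0)$ tree organization matches Sections \ref{rank} and \ref{tileNorms}, but without these two corrections the size bound and the interpolation step do not close.
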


Using the interpolation theory by Muscalu, Tao and Thiele \cite{MTT4}, the symmetries of $T_{\vec{\mathbf{P}}}$ and standard duality arguments then one can deduce the following theorem.

\begin{theorem}\label{6.2}
If $\vec{\mathbf{P}}$ is as before then $T_{\vec{\mathbf{P}}}$ maps boundedly

\begin{equation}
T_{\vec{\mathbf{P}}} : L^{p_1}(\mathbb{R})\times L^{p_2}(\mathbb{R}) \times L^{p_3}(\mathbb{R}) \mapsto L^{p_4}(\mathbb{R})
\end{equation}

\noindent for any $1 <p_1, p_2, p_3 \leq \infty$ and $\frac{2}{5} <p_4<\infty$ such that $\frac{1}{p_1} + \frac{1}{p_2} + \frac{1}{p_3} = \frac{1}{p_4}$ and $\frac{2}{3}< \frac{p_2 p_3}{p_2 + p_3} \leq \infty$. Furthermore, the constant of boundedness depends on $n_1, n_2, n_3 \text{ and } n_4$ in a way that can be bounded by $\prod\limits_{j=1}^{3}|\log_2(\langle n_j \rangle )|^4$.
\end{theorem}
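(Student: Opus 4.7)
The plan is to deduce Theorem \ref{6.2} from Theorem \ref{rest_weak_type} via multilinear interpolation, exploiting the symmetries of the quadlinear form $\Lambda_{\vec{\mathbf{P}}}$. I would work with $\Lambda_{\vec{\mathbf{P}}}$ directly since $\langle T_{\vec{\mathbf{P}}}(f_1,f_2,f_3),f_4\rangle=\Lambda_{\vec{\mathbf{P}}}(f_1,f_2,f_3,f_4)$, and the claimed $L^{p_1}\times L^{p_2}\times L^{p_3}\to L^{p_4}$ bound is equivalent to the generalized restricted weak-type bound of $\Lambda_{\vec{\mathbf{P}}}$ at $(\gamma_1,\gamma_2,\gamma_3,\gamma_4)=(1/p_1,1/p_2,1/p_3,1/p_4')$, with $\gamma_1+\gamma_2+\gamma_3+\gamma_4=1$.

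First, I would observe that Theorem \ref{rest_weak_type} already provides such an estimate at one vertex, namely $(\gamma_1,\gamma_2,\gamma_3,\gamma_4)$ with $\gamma_1,\gamma_3$ just below $1$ and $\gamma_2$ just below $1/2$ (so $\gamma_4$ just above $-1/2$). Next, I would point out that while the paraproduct slot $P_1$ is distinguished and its exponent $\gamma_1$ must always stay close to (but below) $1$, the slots $P_2, P_3, P_4$ are all bilinear-Hilbert-transform--like and their roles can be permuted. Permuting $f_2\leftrightarrow f_3$ gives a vertex with $\gamma_3$ near $1/2$; permuting $f_2\leftrightarrow f_4$ or $f_3\leftrightarrow f_4$ via the duality between $T_{\vec{\mathbf{P}}}$ and $\Lambda_{\vec{\mathbf{P}}}$ gives vertices where the "short" index near $1/2$ is $\gamma_4$ (and correspondingly one of $\gamma_2,\gamma_3$ sits near $-1/2$). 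In total this yields a family of vertex estimates distributed symmetrically around the $\{2,3,4\}$ coordinates, each with $\gamma_1$ near $1$.

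Then I would invoke the abstract multilinear Marcinkiewicz interpolation of Muscalu, Tao and Thiele \cite{MTT4}, which converts a collection of generalized restricted weak-type estimates at the vertices of an open region in the $\gamma$-simplex into strong-type estimates in the interior. The convex hull of the vertices generated above, intersected with $\gamma_1+\gamma_2+\gamma_3+\gamma_4=1$, is exactly the region described by $1<p_1\leq\infty$, $\frac{2}{3}<\frac{p_2p_3}{p_2+p_3}\leq\infty$, and $\frac{2}{5}<p_4<\infty$: the upper bound $p_1>1$ reflects that $\gamma_1$ never reaches $1$ (the paraproduct slot never achieves $L^1$); the BHT-type constraint on $p_2,p_3$ reflects the usual limitation of the short index near $1/2$; and $p_4>2/5$ is the widest range attainable by summing. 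Crucially, the constant $\prod_{j=1}^{3}|\log_2(\langle n_j\rangle)|^4$ from Theorem \ref{rest_weak_type} is independent of the vertex (the symmetries permute the $f_i$'s but not the $n_j$'s, so we pick up at worst the same bound for each permuted estimate), hence it is preserved under the convex combinations carried out by the interpolation.

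The main obstacle I expect is the bookkeeping in the last paragraph: identifying the precise convex hull of the vertex set and verifying that it coincides with the claimed range, particularly at the boundary where $p_4$ approaches $2/5$ or $\frac{p_2p_3}{p_2+p_3}$ approaches $2/3$. The other point requiring some care is the implementation of the "$f_j\leftrightarrow f_4$" symmetries: since $P_1$ cannot be permuted, one must check that the rank $(1,0)$ property (Section \ref{rank}) is preserved under the relevant reorderings, which it is precisely because the definition singles out $\{i_4\}$ (the paraproduct tile) while being symmetric in $\{i_1,i_2,i_3\}$.
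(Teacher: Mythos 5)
Your approach --- deducing Theorem \ref{6.2} from Theorem \ref{rest_weak_type} by combining the interpolation machinery of \cite{MTT4} with the symmetries among the slots $2,3,4$ and duality --- is exactly what the paper does; the paper's own proof is the single sentence preceding the theorem, so you have in fact supplied more detail than the source. Two points in your bookkeeping need repair, however. First, with $\gamma_1,\gamma_3$ near $1$ and $\gamma_2$ near $\tfrac12$, the constraint $\gamma_1+\gamma_2+\gamma_3+\gamma_4=1$ forces $\gamma_4$ near $-\tfrac32$, not $-\tfrac12$; it is precisely $\gamma_4=1-1/p_4>-\tfrac32$ that yields $p_4>\tfrac25$, so your stated endpoint is right but your intermediate arithmetic is inconsistent with it.

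Second, and more substantively: every vertex you generate by permuting the slots $\{2,3,4\}$ keeps $\gamma_1$ pinned near $1$, so the convex hull of those vertices is a thin slab near $\gamma_1=1$ inside the hyperplane $\sum_j\gamma_j=1$, and convex combinations of them can never reach exponents with $p_1$ away from $1$ --- in particular not $p_1=\infty$, which Theorem \ref{6.2} allows. To cover the full claimed region one must use the fact that the restricted weak type estimate actually holds with $|E_1|^{\gamma_1}$ for \emph{every} $\gamma_1\in(0,1)$; this is visible in the proof in Section \ref{proofDiscrOp}, where the exponent $a_1$ attached to $|E_1|$ is a free parameter in $(0,1)$ (the first slot enters the size-energy bound with a pure size factor and no energy interpolation), even though the statement of Theorem \ref{rest_weak_type} only records $\gamma_1$ close to $1$. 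Once that one-parameter family of vertices is added, the convex hull argument you describe does produce the region $1<p_1\leq\infty$, $\tfrac23<\tfrac{p_2p_3}{p_2+p_3}\leq\infty$, $\tfrac25<p_4<\infty$, and the logarithmic dependence on the $n_j$ is preserved exactly as you say.
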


\noindent Note that this is a stronger result than in theorem \ref{original_thm}.

To prove Theorem \ref{original_thm} then let $p_1$, $p_2$, $p_3$ and $p_4$ be as in the theorem and recall that in section 3 we commented that it is enough to show the theorem for $\tilde{T}_{\beta}$. If $p_4 \geq 1$ then standard arguments extend the theorem to $\tilde{T}_{\beta}$. If however $p_4 < 1$ let $f_i \in L^{p_i}(\mathbb{R})$, $i=1,2,3$ and note

\begin{align*}
\| \tilde{T}_{\beta}(f_1,f_2,f_3)\|_{p_4} &= \| \tilde{T}_{\beta}(f_1,f_2,f_3)^{p_4} \|_{1}^{1/p_4} \\
&\lesssim \| \left( \int_{0}^{1}\sum\limits_{n_1, n_2, n_3}C(n_1,n_2,n_3) T_{\vec{\mathbf{P}},\eta}(f_1, f_2, f_3)d\eta \right)^{p_4} \|_{1}^{1/p_4} \\
&\lesssim \| \int_{0}^{1}\sum\limits_{n_1, n_2, n_3}C(n_1,n_2,n_3)^{p_4} T_{\vec{\mathbf{P}},\eta}^{p_4}(f_1, f_2, f_3)d\eta \|_{1}^{1/p_4}
\end{align*}

This last step is only well defined if $p_4 > \frac{1}{2}$ because $C(n_1, n_2, n_3)$ includes terms that contain $\frac{1}{\langle n_3 \rangle^2}$ by lemma \ref{decay} and we need $\frac{1}{\langle n_3 \rangle^{2p_4}}$ to be summable. In that case then theorem \ref{6.2} and lemma \ref{decay}, along with standard results on the convergence of series of the type $\sum\limits_{n}\frac{|\log_2(\langle n \rangle )|^4}{\langle n \rangle^p}$ where $p>1$, can be used to conclude that theorem \ref{original_thm} holds true for $\tilde{T}_{\beta}$ and thus for $T_{\beta}$.

Note that the reason why $p_4>\frac{1}{2}$ might seem a bit naive. One could hope to improve the result by treating all the $T_{\vec{\mathbf{P}}}$ simultaneously by picking in section \ref{proofDiscrOp} a common exceptional set. Such a strategy leads to a loss of $\langle n \rangle^{(1+\epsilon)}$, $\epsilon >0$, in the size estimates in section \ref{tileNorms}. Thus, running through the standard argument, one would eventually have to control a sum of the following type
$$\sum\limits_{n_1,n_2,n_3}C(n_1,n_2,n_3)\langle n_1 \rangle^{(1+\epsilon)b_1}\langle n_2 \rangle^{(1+\epsilon)b_2}\langle n_3 \rangle^{(1+\epsilon)b_3} .$$
If we only consider the first term in the estimate of the Fourier coefficient one faces the following sum
$$\sum\limits_{n_1,n_2,n_3}\frac{\langle n_3 \rangle^{(1+\epsilon)b_3}}{\langle n_3 \rangle^2}\cdot \frac{\langle n_1 \rangle^{(1+\epsilon)b_1}}{\langle n_1 - n_3 \rangle^{M_2}} \cdot \frac{\langle n_2 \rangle^{(1+\epsilon)b_2}}{\langle n_2 - \beta n_3 \rangle^{M_3}} .$$
Changing variables through
$$ k= n_1 - n_3 \text{ and } l= n_2 - \beta n_3 $$
the sum becomes
$$\sum\limits_{k,l,n_3}\frac{\langle n_3 \rangle^{(1+\epsilon)b_3}}{\langle n_3 \rangle^2}\cdot \frac{\langle k + n_3 \rangle^{(1+\epsilon)b_1}}{\langle k \rangle^{M_2}} \cdot \frac{\langle l+\beta n_3 \rangle^{(1+\epsilon)b_2}}{\langle l \rangle^{M_3}}.$$
Hence, one would like the expression
$$ \frac{\langle n_3 \rangle^{(1+\epsilon)(b_1 + b_2 + b_3)}}{\langle n_3 \rangle^2} .$$
to be summable, which places stringent requirements on $b_1$, $b_2$ and $b_3$. In fact, if one goes thoroughly through the standard argument it is not hard to see that the condition $p_4 > \frac{1}{2}$ cannot be improved. It is thus an interesting open question whether this condition can be improved, which clearly either requires some novel ideas or some more delicate estimates.

\section{Trees}

The standard approach to prove the desired estimates for the form $\Lambda_{\vec{\mathbf{P}}}$ is to organize the collection of quadtiles $\vec{\mathbf{P}}$ into trees. We may assume, and will do so for the rest of the article, that $\vec{\mathbf{P}}$ is sparse and of rank $(1,0)$. We will now recall basic definitions and comments for trees from \cite{MTT2}. The only change is that we will not consider $1$ trees at all. We will essentially ignore the first position when setting up the trees. Also note that we set up the trees based on untranslated tiles.

\begin{definition}
For any $2\leq j \leq 4$ and a quadtile $\vec{P}_T \in \vec{\mathbf{P}}$, define a $j$-tree with top $\vec{P}_T$ to be a collection of quadtiles $T\subseteq\vec{\mathbf{P}}$ such that

\begin{equation}
P_j \leq P_{T,j} \text{ for all } \vec{P}\in T,
\end{equation}

\noindent where $P_{T,j}$ is the $j$ component of $\vec{P}_T$. We write $I_T$ and $\omega_{T,j}$ for $I_{\vec{P}_T}$ and $\omega_{P_{T,j}}$ respectively. We say that $T$ is a tree if it is a $j$-tree for some $2 \leq j \leq 4$.
\end{definition}

Note that $T$ does not necessarily have to contain its top $\vec{P}_{T}$.

\begin{definition}
Let $2 \leq i \leq 4$. Two trees $T$, $T'$ are said to be strongly $i$-disjoint if
\begin{itemize}
\item $P_i \neq P_{i}'$ for all $\vec{P}\in T$, $\vec{P'}\in T'$.
\item Whenever $\vec{P}\in T$, $\vec{P'} \in T'$ are such that $2\omega_{P_i}\cap 2\omega_{P_{i}'} \neq \emptyset$, then one has $I_{\vec{P'}} \cap I_T = \emptyset$, and similarly with $T$ and $T'$ reversed.
\end{itemize}
\end{definition}

Note that if $T$ and $T'$ are strongly $i$-disjoint, then $I_P \times 2\omega_{P_i}\cap I_{P'} \times 2\omega_{P_{i}'} = \emptyset$ for all $\vec{P}\in T$, $\vec{P'} \in T'$.

Given that $\vec{\mathbf{P}}$ is sparse, it is easy to see that if $T$ is an $i$-tree, then for all $\vec{P}, \vec{P'} \in T$ and $j \neq i$, $2\leq j \leq 4$, we have

\begin{equation*}
\omega_{P_j} = \omega_{P_{j}'}
\end{equation*}

\noindent or

\begin{equation*}
2\omega_{P_j}\cap 2\omega_{P_{j}'} = \emptyset
\end{equation*}

We pick trees for tiles $\vec{P}$ as in the bilinear Hilbert transform case but remember that our wave packets are in general adapted to tiles $P_{i}^{n_i}$, $i=1,2,3$, that are translated in time by $n_i$ units of length $|I_{\vec{P}}|$. Thus the effective trees we face are translated and are furthermore not evenly translated.

Due to the dyadic structure of the trees and the dyadic structure of the translation applied to the tiles in the trees then one can see that we can do better than saying that a translated tree, derived from a tree $T$, is supported on $\bigcup\limits_{j=0}^{n_i}I_{T}^{j}$. As Muscalu observes \cite{M1} (and can be seen from the argument in section \ref{ShiftedMax}) then in fact the translated tree is supported on $\bigcup\limits_{j\in Fr(n_i)}I_{T}^{j}$ where $Fr(n_i)$ is a set of indices that contains for example $0$, $1$ and $n_i$. We also know the following fact about the cardinality of $Fr(n_i)$

\begin{equation*}
|Fr(n_i)| \lesssim \log_2(\langle n_i \rangle) .
\end{equation*}

\noindent We call $\bigcup\limits_{j\in Fr(n_i)}I_{T}^{j}$ "$I_T$ and friends".

\section{Tile Norms}\label{tileNorms}

Let's recall the standard tile norms from the paper by Muscalu, Tao and Thiele \cite{MTT2}.

\begin{definition}
Let $\vec{\mathbf{P}}$ be a finite collection of quadtiles, $j=1,2,3,4$ and let $(a_{P_j})_{\vec{P}\in\vec{\mathbf{P}}}$ be a sequence of complex numbers. We define the size of this sequence by

\begin{equation*}
\text{size}_j((a_{P_j})_{\vec{P}\in\vec{\mathbf{P}}}) := \sup\limits_{T\subset \vec{\mathbf{P}}}(\frac{1}{|I_T|}\sum\limits_{\vec{P}\in T}|a_{P_j}|^2)^{1/2}
\end{equation*}

\noindent where $T$ ranges over all trees in $\vec{\mathbf{P}}$ which are either one quadtile trees or $i$-trees for some $2\leq i \leq 4$ such that $j$ is a good index with respect to $i$, as in the definition of rank $(1,0)$.

We also define the energy of a sequence by

\begin{equation*}
\text{energy}_j((a_{P_j})_{\vec{P}\in\vec{\mathbf{P}}}) := \sup\limits_{n\in\mathbb{Z}}\sup\limits_{\mathbb{T}} 2^n (\sum\limits_{T\in\mathbf{T}}|I_T|)^{1/2}
\end{equation*}

\noindent where $\mathbf{T}$ ranges over all collections of strongly $j$-disjoint trees, $2 \leq j \leq 4$, in $\vec{\mathbf{P}}$ such that

\begin{equation*}
(\sum\limits_{\vec{P}\in T}|a_{P_j}|^2)^{1/2} \geq 2^n |I_T|^{1/2}
\end{equation*}

\noindent for all $T\in\mathbf{T}$ and

\begin{equation*}
(\sum\limits_{\vec{P}\in T'}|a_{P_j}|^2)^{1/2} \leq 2^{n+1} |I_{T'}|^{1/2}
\end{equation*}

\noindent for all sub-trees $T' \subset T\in\mathbf{T}$.

\end{definition}

\noindent We will use those definitions for $a_{P_j} = \langle f_j, \phi_{P_j^{n_j}, j} \rangle$. Note that the restriction to $i$-trees for some $2\leq i \leq 4$ such that $j$ is a good index with respect to $i$, as in the definition of rank $(1,0)$, means that whenever such trees exist then we can attempt to use square function estimates on our collection of $P_j$ tiles that come with those trees. In other words, the $P_j$ tiles stack up similarly as in the bilinear Hilbert transform case.

Recall the John-Nirenberg inequality \cite{MTT2}.

\begin{lemma}\label{john_nirenberg}
Let $\vec{\mathbf{P}}$ be a finite collection of quadtiles, $j=1,2,3,4$ and let $(a_{P_j})_{\vec{P}\in\vec{\mathbf{P}}}$ be a sequence of complex numbers. Then

\begin{equation*}
\text{size}_j((a_{P_j})_{\vec{P}\in\vec{\mathbf{P}}}) \sim \sup\limits_{T\subset \vec{\mathbf{P}}}\frac{1}{|I_T|}\|(\sum\limits_{\vec{P}\in T}|a_{P_j}|^2\frac{1_{I_{\vec{P}}}}{|I_{\vec{P}}|})^{1/2}\|_{L^{1,\infty}(I_T)}
\end{equation*}

\noindent where $T$ ranges over all trees in $\vec{\mathbf{P}}$ which are either one quadtile trees or $i$-trees for some $2\leq i \leq 4$ such that $j$ is a good index with respect to $i$, as in the definition of rank $(1,0)$.
\end{lemma}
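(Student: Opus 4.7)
The plan is to establish both directions of the stated equivalence. Write $S_T(x) := \bigl(\sum_{\vec{P}\in T}|a_{P_j}|^2\,\mathbf{1}_{I_{\vec{P}}}(x)/|I_{\vec{P}}|\bigr)^{1/2}$ throughout.

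The direction $\sup_{T} |I_T|^{-1}\|S_T\|_{L^{1,\infty}(I_T)} \lesssim \text{size}_j$ is an immediate Lebesgue-embedding statement. The condition $P_j \leq P_{T,j}$ forces $I_{\vec{P}} \subseteq I_T$ for every $\vec{P}\in T$, so
$$\|S_T\|_{L^2(I_T)}^2 \,=\, \sum_{\vec{P}\in T}|a_{P_j}|^2 \!\int\! \frac{\mathbf{1}_{I_{\vec{P}}}}{|I_{\vec{P}}|}\,dx \,=\, \sum_{\vec{P}\in T}|a_{P_j}|^2.$$
Since the weak-$L^1$ norm is dominated by the $L^1$ norm and, by Cauchy--Schwarz, $\|S_T\|_{L^1(I_T)} \leq |I_T|^{1/2}\|S_T\|_{L^2(I_T)}$, dividing by $|I_T|$ produces exactly the desired inequality.

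For the reverse direction --- the genuine John--Nirenberg half --- I would fix a tree $T$, denote the right-hand side of the asserted equivalence by $\lambda$, and aim to show $\sum_{\vec{P}\in T}|a_{P_j}|^2 \lesssim \lambda^2 |I_T|$ via an iterative Calder\'{o}n--Zygmund stopping-time construction. Build inductively a decreasing family $V_0 \supseteq V_1 \supseteq \cdots$ with $V_0 := I_T$; at stage $k+1$, let $V_{k+1}$ be the union of the maximal dyadic subintervals $J$ of $V_k$ on which the restricted subtree $T_J := \{\vec{P}\in T : I_{\vec{P}} \subseteq J\}$ satisfies $|\{x \in J : S_{T_J}(x) > 2\lambda\}| > |J|/2$. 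The weak-$L^1$ hypothesis applied to $T_J$ gives $\|S_{T_J}\|_{L^{1,\infty}(J)} \leq \lambda |J|$, which by Chebyshev forces $|V_{k+1}| \leq |V_k|/2$, hence $|V_k| \leq 2^{-k}|I_T|$. Outside $V_{k+1}$ but inside $V_k$, a pointwise bound $S_T(x) \lesssim (k+1)\lambda$ follows by telescoping the contributions of successive stopping generations. Layer-cake integration then yields
$$\int_{I_T} S_T^2 \,\lesssim\, \sum_{k\geq 0} (k+1)^2 \lambda^2\, |V_k| \,\lesssim\, \lambda^2 |I_T| \sum_{k\geq 0}(k+1)^2\, 2^{-k} \,\lesssim\, \lambda^2 |I_T|,$$
which is exactly what is needed.

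The main obstacle is organizational rather than analytic: one must verify that each restricted collection $T_J$ is legitimately a tree in the sense required by $\text{size}_j$, i.e., either a one-quadtile tree or an $i$-tree (for some $2 \leq i \leq 4$) for which $j$ is a good index with respect to $i$ in the rank $(1,0)$ sense. This is where the rank $(1,0)$ structure and sparseness of $\vec{\mathbf{P}}$ are essential: they allow one to produce, possibly after splitting $T_J$ into $O(1)$ sub-collections, an appropriate top tile with spatial interval comparable to $J$ and the correct good-index configuration. Absorbing any such finite splitting into the implicit constants, the iteration above proceeds identically to the standard John--Nirenberg argument for tile sizes in \cite{MTT2}.
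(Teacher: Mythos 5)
The paper itself does not write this proof out: it only remarks that the John--Nirenberg argument of Muscalu--Tao--Thiele carries over because of the restricted class of trees allowed in the definition of $\text{size}_j$. So you are supplying an argument the paper omits. Your easy direction is correct: $P_j\le P_{T,j}$ forces $I_{\vec{P}}\subseteq I_T$, so $\|S_T\|_{L^2(I_T)}^2=\sum_{\vec{P}\in T}|a_{P_j}|^2$, and the chain $L^{1,\infty}\le L^1\le |I_T|^{1/2}L^2$ finishes it. Your overall strategy for the converse (an iterated stopping time plus a layer-cake integration) is also the standard and correct one.

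The construction as written, however, does not work. You select the maximal dyadic $J\subseteq V_k$ with $|\{x\in J: S_{T_J}(x)>2\lambda\}|>|J|/2$, and then invoke Chebyshev with $\|S_{T_J}\|_{L^{1,\infty}(J)}\le\lambda|J|$. But that very Chebyshev estimate gives $|\{x\in J: S_{T_J}(x)>2\lambda\}|\le |J|/2$ for \emph{every} dyadic $J$, so your selection criterion is never satisfied and $V_1=\emptyset$. The bound $|V_{k+1}|\le|V_k|/2$ then holds vacuously, but the subsequent claim that $S_T\lesssim (k+1)\lambda$ on $V_k\setminus V_{k+1}$ would assert $S_T\lesssim\lambda$ on all of $I_T$, which is false: a weak-$L^1$ bound does not yield a pointwise bound. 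The correct selection is different. Given a current component $K$ (initially $K=I_T$), Chebyshev applied to $S_{T_K}$ on $K$ gives $|\{x\in K: S_{T_K}(x)>2\lambda\}|\le|K|/2$; the next-generation components are the maximal dyadic intervals $J$ \emph{contained in this superlevel set}. Then $|V_{k+1}|\le|V_k|/2$ is genuine, and for $x\in V_k\setminus V_{k+1}$ one telescopes $S_T(x)^2$ along the chain $K_0\supsetneq\cdots\supsetneq K_k\ni x$: a tile of $T_{K_i}\setminus T_{K_{i+1}}$ containing $x$ has $I_{\vec{P}}$ containing the dyadic parent of $K_{i+1}$, which by maximality contains a point $y$ with $S_{T_{K_i}}(y)\le 2\lambda$, so each generation contributes at most $4\lambda^2$ and $S_T(x)^2\lesssim(k+1)\lambda^2$; your layer-cake computation then closes. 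Finally, the admissibility point you flag at the end --- that $T_J$ must be a tree of the allowed type with top interval $J$, so that the hypothesis applies with normalization $|J|$ rather than $|I_T|$ --- is precisely what the paper's one-line remark is about, and your proposal acknowledges it without resolving it; it does need to be checked (the restriction of an $i$-tree to $\{I_{\vec{P}}\subseteq J\}$ is still an $i$-tree, but one must exhibit a legitimate top over $J$, possibly after an $O(1)$ splitting).
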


\noindent The proof carries exactly over due to our choice of possible trees in the definition of size.

\section{Proof of Discrete Operator Theorem}\label{proofDiscrOp}

\begin{proposition}\label{size_energy_est}
Let $\vec{\mathbf{P}}$ be a finite collection of quadtiles. Then

\begin{multline*}
|\Lambda_{\vec{\mathbf{P}}}(f_1,f_2,f_3,f_4)| \lesssim \text{ size}((\langle f_1, \phi_{P_1^{n_1}, 1} \rangle)_{\vec{P}\in\vec{\mathbf{P}}})\prod\limits_{j=2}^{4}(\text{size}((\langle f_j, \phi_{P_j^{n_j}, j}\rangle )_{\vec{P}\in\vec{\mathbf{P}}}))^{\theta_j} \\
(\text{energy}((\langle f_j, \phi_{P_j^{n_j}, j} \rangle )_{\vec{P}\in\vec{\mathbf{P}}}))^{1-\theta_j}
\end{multline*}

\noindent for any $0 \leq \theta_2, \theta_3, \theta_4 < 1$ with $\theta_2 + \theta_3 + \theta_4 = 1$, with the implicit constant depending on the $\theta_i$.
\end{proposition}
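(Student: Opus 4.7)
The plan is to follow the standard Muscalu--Tao--Thiele size-energy scheme, adapted to our asymmetric rank $(1,0)$ setup in which position $1$ (the paraproduct slot) contributes only a size factor while positions $2,3,4$ interpolate between size and energy. Throughout, set $a_{P_j} := \langle f_j, \phi_{P_j^{n_j},j}\rangle$ and write $S_j := \text{size}_j((a_{P_j})_{\vec{P}})$ and $E_j := \text{energy}_j((a_{P_j})_{\vec{P}})$.

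\smallskip

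First I would perform the greedy stopping-time decomposition for each $j \in \{2,3,4\}$ separately. Following the usual iterative extraction, I partition $\vec{\mathbf{P}} = \bigsqcup_{n_j \in \mathbb{Z}} \vec{\mathbf{P}}_{n_j}^{(j)}$ and organize each $\vec{\mathbf{P}}_{n_j}^{(j)}$ into a family $\mathbf{T}_{n_j}^{(j)}$ of strongly $j$-disjoint trees (of the type allowed by the rank $(1,0)$ good-index condition) such that the residual size$_j$ after removing $\vec{\mathbf{P}}_{n_j}^{(j)}$ is $\lesssim 2^{n_j}$ and $\sum_{T \in \mathbf{T}_{n_j}^{(j)}} |I_T| \lesssim 2^{-2n_j} E_j^2$. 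Intersecting the three stopping times yields a refined partition $\vec{\mathbf{P}} = \bigsqcup_{\vec{n}} \vec{\mathbf{P}}_{\vec{n}}$ together with a tree collection $\mathbf{T}_{\vec{n}}$ on which $\text{size}_j \lesssim \min(2^{n_j}, S_j)$ simultaneously for $j=2,3,4$, and the energy cardinality bound still holds for each individual $j$.

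\smallskip

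Next I would establish the single-tree estimate: for any tree $T \in \mathbf{T}_{\vec{n}}$ in some direction $i \in \{2,3,4\}$,
\begin{equation*}
|\Lambda_T(f_1,f_2,f_3,f_4)| \lesssim |I_T|\cdot \text{size}_1^T \cdot \prod_{k=2}^{4} \text{size}_k^T,
\end{equation*}
where $\text{size}_k^T$ is the size restricted to $T$. The proof uses Cauchy--Schwarz: the tree direction $i$ is bounded directly by $\text{size}_i^T \cdot |I_T|^{1/2}$; the other two indices in $\{2,3,4\}$ are good indices with respect to $i$ in the sense of rank $(1,0)$, so Lemma \ref{john_nirenberg} together with vector-valued Bessel / Fefferman--Stein square function estimates yields bounds by $\text{size}_k^T \cdot |I_T|^{1/2}$; and position $1$ is treated as a paraproduct, since the $P_1$ tiles in a sparse tree are linearly ordered at each scale and can be summed by a telescoping / Littlewood--Paley argument. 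The shift by $n_j$ units forces the integration to occur over $I_T$ and friends, $\bigcup_{l \in Fr(n_j)} I_T^l$, inserting an allowed factor of $\prod_j \log_2 \langle n_j\rangle$ into the constant. I expect this single-tree estimate, particularly the paraproduct handling of $P_1$ in the presence of the shift, to be the main technical obstacle, since the $P_1$ tiles need not be disjoint within a tree and the usual BHT wave-packet arguments do not directly apply at position $1$.

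\smallskip

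Finally I would sum over trees and over $\vec{n}$. Plugging the tree estimate into $\Lambda_{\vec{\mathbf{P}}_{\vec{n}}}$ gives
\begin{equation*}
|\Lambda_{\vec{\mathbf{P}}_{\vec{n}}}| \lesssim S_1 \cdot \prod_{j=2}^{4} \min(2^{n_j}, S_j) \cdot \sum_{T \in \mathbf{T}_{\vec{n}}}|I_T|.
\end{equation*}
Setting $a_j := (1-\theta_j)/2$ for $j=2,3,4$, the constraint $\theta_2+\theta_3+\theta_4 = 1$ gives $a_2+a_3+a_4 = 1$, so the inequality $\min_j x_j \leq \prod_j x_j^{a_j}$ applied to $x_j = 2^{-2n_j}E_j^2$ yields $\sum_{T}|I_T| \lesssim \prod_j (2^{-2n_j}E_j^2)^{a_j}$. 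Summing over $\vec{n}$ decouples into three one-dimensional geometric sums of the form $\sum_{n_j} \min(2^{n_j},S_j)\, 2^{-2n_j a_j}$, each of which is $\lesssim S_j^{1-2a_j} = S_j^{\theta_j}$ provided $0 < a_j < 1/2$, i.e.\ $0 \le \theta_j < 1$ (the endpoint $\theta_j = 0$ can be handled by a standard $\epsilon$-perturbation or by using the trivial bound in that slot). Multiplying against the $E_j^{2a_j} = E_j^{1-\theta_j}$ factors produces exactly
\begin{equation*}
|\Lambda_{\vec{\mathbf{P}}}| \lesssim S_1 \cdot \prod_{j=2}^{4} S_j^{\theta_j} E_j^{1-\theta_j},
\end{equation*}
completing the proof.
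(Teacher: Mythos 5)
Your proposal is correct and follows essentially the same route as the paper: a stopping-time decomposition of $\vec{\mathbf{P}}$ into tree collections with the size/tree-counting dichotomy, a single-tree estimate of the form $|\Lambda_T|\lesssim |I_T|\prod_{j=1}^{4}\text{size}_j^{T}$, and the standard summation with $\min(2^{n_j},S_j)$ interpolated against $2^{-2n_j a_j}E_j^{2a_j}$, $a_j=(1-\theta_j)/2$ (the paper simply cites that "the proof runs as in the bilinear Hilbert transform case" for both the decomposition and the final summation). The one place you diverge is the spot you flag as the main obstacle: the paper does not need any paraproduct, telescoping, or Littlewood--Paley argument for position $1$ inside the single-tree estimate. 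By construction, $\text{size}_1$ is defined as a supremum over single-quadtile trees and over $i$-trees for which $1$ is a good index; so in the Cauchy--Schwarz step one either bounds $\bigl(\sum_{\vec{P}\in T}|\langle f_1,\phi_{P_1^{n_1},1}\rangle|^2\bigr)^{1/2}\le |I_T|^{1/2}\,\text{size}_1^{T}$ directly from the definition (when $1$ is a good index with respect to the tree direction, as in the paper's worked case of a $2$-tree with good indices $1$ and $4$), or uses the trivial sup bound via one-quadtile trees otherwise. The overlapping of the $P_1$ tiles is dealt with later, in the size estimate of Lemma \ref{size_est}, not here; also note that the $\log_2\langle n_j\rangle$ losses you insert into the single-tree estimate belong in Lemmas \ref{size_est} and \ref{energy_est}, since Proposition \ref{size_energy_est} is a purely combinatorial statement in terms of abstract sizes and energies and carries no such factors.
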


\noindent This proposition will be proven in section 14.

\begin{lemma}\label{size_est}
Let $\vec{\mathbf{P}}$ be a finite collection of quadtiles, $j \in \lbrace 1,2,3,4 \rbrace$ and $E$ be a set of finite measure. Then for every $|f| \leq 1_{E}$ one has

\begin{equation*}
\text{size}((\langle f, \phi_{P_j^{n_j}, j} \rangle)_{\vec{P}\in\vec{\mathbf{P}}}) \lesssim \log_2(\langle n_j \rangle) \sup\limits_{\vec{P}\in\vec{\mathbf{P}}}\frac{1}{|I_{\vec{P}}|}\int_{E}\tilde{\chi}_{I_{P_j^{n_j}}}^{M}
\end{equation*}

\noindent for all $M>0$, with the implicit constant depending on $M$.
\end{lemma}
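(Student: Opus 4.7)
The plan is to pass to the John--Nirenberg reformulation of $\mathrm{size}$ furnished by Lemma~\ref{john_nirenberg}, reducing matters to controlling
\[
\frac{1}{|I_T|}\Bigl\lVert S_T\Bigr\rVert_{L^{1,\infty}(I_T)},\qquad S_T(x):=\Bigl(\sum_{\vec P\in T}\frac{|\langle f,\phi_{P_j^{n_j},j}\rangle|^2}{|I_{\vec P}|}\mathbf 1_{I_{\vec P}}(x)\Bigr)^{1/2},
\]
by $\log_2(\langle n_j\rangle)\,\mathcal E$ for every tree $T$ admissible in the definition of $\mathrm{size}_j$ (one-quadtile trees and $i$-trees with $2\leq i\leq 4$ for which $j$ is a good index), where $\mathcal E:=\sup_{\vec P\in\vec{\mathbf P}}|I_{\vec P}|^{-1}\int_E\tilde\chi_{I_{P_j^{n_j}}}^M$.

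The single-quadtile case is handled directly from the pointwise estimate $|\langle f,\phi_{P_j^{n_j},j}\rangle|\lesssim|I_{\vec P}|^{-1/2}\int_E\tilde\chi_{I_{P_j^{n_j}}}^M$, which follows from $|f|\leq \mathbf 1_E$ and the $L^2$-normalization of $\phi_{P_j^{n_j},j}$. For the general case, set $w_{\vec P}:=|I_{\vec P}|^{-1}\int_E\tilde\chi_{I_{P_j^{n_j}}}^M\leq\mathcal E$. The same pointwise estimate yields $|\langle f,\phi_{P_j^{n_j},j}\rangle|^2/|I_{\vec P}|\leq w_{\vec P}^2$, so the subadditivity inequality $(\sum a_k^2)^{1/2}\leq\sum a_k$ gives $S_T(x)\leq\sum_{\vec P\in T}w_{\vec P}\mathbf 1_{I_{\vec P}}(x)$. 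Dominating the weak-$L^1$ norm on $I_T$ by the $L^1$ norm and applying Fubini one obtains
\[
\|S_T\|_{L^{1,\infty}(I_T)}\leq\sum_{\vec P\in T}w_{\vec P}|I_{\vec P}|=\int_E\sum_{\vec P\in T}\tilde\chi_{I_{P_j^{n_j}}}^M(x)\,dx,
\]
reducing the problem to showing this integral is $\lesssim\log_2(\langle n_j\rangle)\,|I_T|\mathcal E$.

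To prove the latter bound I aggregate the inner sum by dyadic spatial scale. At scale $|I_{\vec P}|=2^{-k}|I_T|$ the intervals $I_{\vec P}$ of the tiles $\vec P\in T$ are pairwise disjoint inside $I_T$, so the shifted intervals $I_{P_j^{n_j}}=I_{\vec P}+n_j|I_{\vec P}|$ are disjoint translates contained in $I_T+n_j2^{-k}|I_T|$. Bounded overlap of disjoint common-scale dyadic intervals lets me dominate the scale-$k$ piece of $\sum_{\vec P}\tilde\chi_{I_{P_j^{n_j}}}^M$ pointwise by a bump $\tilde\chi_{I_T+n_j2^{-k}|I_T|}^{M'}$ for some $M'<M$. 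As $k$ ranges, the translates $I_T+n_j2^{-k}|I_T|$ sweep through the ``friends'' $\{I_T^m:m\in Fr(n_j)\}$ introduced in the section on trees, with each friend $m$ visited by only $O(1)$ dyadic scales (those with $n_j2^{-k}\approx m$). Integrating against $\mathbf 1_E$ thus gives
\[
\int_E\sum_{\vec P\in T}\tilde\chi_{I_{P_j^{n_j}}}^M\lesssim\sum_{m\in Fr(n_j)}\int_E\tilde\chi_{I_T^m}^{M'},
\]
and each integral on the right is $\lesssim|I_T|\mathcal E$ by summing $\int_E\tilde\chi_{I_{P_j^{n_j}}}^M\leq|I_{\vec P}|\mathcal E$ over the $\sim 2^k$ tiles at the one scale producing the friend $m$. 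Since $|Fr(n_j)|\lesssim\log_2(\langle n_j\rangle)$, this proves the claim.

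The principal obstacle is that a naive per-scale summation produces a factor equal to the depth of $T$, which can be arbitrarily large, while a naive Bessel/almost-orthogonality argument produces only $(\log_2(\langle n_j\rangle)\,\mathcal E)^{1/2}$, a bound of the wrong algebraic form when $\mathcal E$ is small. The correct single logarithm times $\mathcal E$ emerges only by combining the pointwise decay of wave packets off their shifted support intervals (which collapses the scale-$k$ sum into a single bump) with the dyadic rigidity of the map $k\mapsto n_j2^{-k}$ (which bundles the tree's scales into the $O(\log_2(\langle n_j\rangle))$ friends in $Fr(n_j)$).
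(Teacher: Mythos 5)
There is a genuine gap at the point where you linearize the square function: from $S_T(x)\leq\sum_{\vec P\in T}w_{\vec P}1_{I_{\vec P}}(x)$ you deduce $\|S_T\|_{L^{1,\infty}(I_T)}\leq\sum_{\vec P\in T}w_{\vec P}|I_{\vec P}|$, and this quantity genuinely grows like the \emph{depth} of the tree times $|I_T|\,\mathcal E$, not like $\log_2(\langle n_j\rangle)\,|I_T|\,\mathcal E$. Concretely, take $n_j=0$, $E=3I_T$, and $T$ a complete tree containing, for each $0\leq k\leq D$, all $2^k$ dyadic subintervals of $I_T$ of length $2^{-k}|I_T|$ as time intervals; then $w_{\vec P}\sim 1$ for every $\vec P$, $\mathcal E\sim 1$, and $\sum_{\vec P\in T}w_{\vec P}|I_{\vec P}|\sim D\,|I_T|$, while the claimed bound is $O(|I_T|)$. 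Your attempted rescue --- that ``each friend $m$ is visited by only $O(1)$ dyadic scales'' --- is exactly where this breaks: it holds only for the coarse scales with $n_j2^{-k}\gtrsim 1$; every scale with $n_j2^{-k}\leq 1$ (i.e.\ all sufficiently fine scales, of which there are as many as the tree is deep) deposits its full mass $\sim|I_T|\mathcal E$ onto the same $O(1)$ friends $I_T^0,I_T^1$. The set $Fr(n_j)$ controls the number of distinct spatial regions occupied by the shifted tiles, not the number of scales contributing to each region.

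The inequality $(\sum a_k^2)^{1/2}\leq\sum a_k$ is therefore too lossy; the $\ell^2$ cancellation across scales must be retained. The paper's proof keeps it: after first splitting the maximizing tree over the $O(\log_2\langle n_j\rangle)$ friends (which is where the logarithm enters) and applying John--Nirenberg, it splits $f=f\cdot 1_{5I_T}+f\cdot 1_{(5I_T)^c}$. The near part is handled by the $L^1\to L^{1,\infty}$ boundedness of the discrete square function $g\mapsto\bigl(\sum_{\vec P\in T}|\langle g,\phi_{P_j,j}\rangle|^2 1_{I_{\vec P}}/|I_{\vec P}|\bigr)^{1/2}$ --- a Calder\'on--Zygmund input encoding almost-orthogonality across scales, which is precisely what collapses the depth factor --- and only the far part, where the rapid decay of $\tilde{\chi}_{I_{\vec P}}^M$ off $I_T$ makes the sum over all tiles of all scales converge geometrically, is estimated by the triangle inequality as you do. Your reduction via John--Nirenberg and your use of $Fr(n_j)$ are in the right spirit, but without the square-function estimate for the local part of $f$ the argument does not close.
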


\noindent Lemma \ref{size_est} will be proven in section 12.

Define the shifted dyadic maximal operator $M^{n}$ as follows \cite{M1}

\begin{equation*}
M^{n}f(x) := \sup\limits_{x\in I}\frac{1}{|I|}\int_{\mathbb{R}}|f(y)|\tilde{\chi}_{I^n}(y)dy
\end{equation*}

\noindent where the supremum is taken only over dyadic intervals.

\begin{lemma}\label{shifted_max}
For any $n\in\mathbb{Z}$ the shifted maximal function $M^n$ maps boundedly $L^p(\mathbb{R})$ into $L^p(\mathbb{R})$ with a bound of the type $O(log_2(\langle n \rangle))$. It also maps boundedly $L^{\infty}(\mathbb{R})$ into $L^{\infty}(\mathbb{R})$ and $L^1(\mathbb{R})$ into $L^{1,\infty}(\mathbb{R})$ with a bound of the type $O(log_2(\langle n \rangle))$.
\end{lemma}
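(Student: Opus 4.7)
My plan is to reduce everything to two endpoint bounds---an $L^\infty$ bound with constant $O(1)$ and a weak-$(1,1)$ bound with constant $O(\log_2\langle n \rangle)$---and then invoke Marcinkiewicz interpolation to obtain the $L^p$ boundedness stated in the lemma. Interpolating weak $(1,1)$ with constant $O(\log_2\langle n\rangle)$ against the uniform $L^\infty$ bound yields $L^p\to L^p$ operator norm of size at most $(\log_2\langle n\rangle)^{1/p} \le O(\log_2\langle n\rangle)$, so this route delivers exactly the claimed estimates.

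The $L^\infty$ estimate is direct. For the implicit decay exponent $M>1$ large enough, rescaling and translation invariance give $\tfrac{1}{|I|}\int \tilde{\chi}_{I^n}^M \lesssim 1$ uniformly in $n$ and in the dyadic interval $I$, hence $\tfrac{1}{|I|}\int |f|\tilde{\chi}_{I^n}^M \le \|f\|_\infty \cdot \tfrac{1}{|I|}\int \tilde{\chi}_{I^n}^M \lesssim \|f\|_\infty$, and taking the supremum yields $\|M^n f\|_\infty \lesssim \|f\|_\infty$ with constant independent of $n$.

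For the weak $(1,1)$ estimate I would run a Calder\'on--Zygmund selection. At height $\lambda>0$, the level set $\{M^n f > \lambda\}$ is the disjoint union of maximal dyadic intervals $I_\alpha$ characterized by $\tfrac{1}{|I_\alpha|}\int |f|\tilde{\chi}_{I_\alpha^n}^M > \lambda$, which gives
\[
\bigl|\{M^n f > \lambda\}\bigr| = \sum_\alpha |I_\alpha| \le \frac{1}{\lambda}\int |f(y)| \sum_\alpha \tilde{\chi}_{I_\alpha^n}^M(y)\,dy,
\]
so everything reduces to the pointwise estimate $\sum_\alpha \tilde{\chi}_{I_\alpha^n}^M(y) \lesssim \log_2\langle n\rangle$ for a.e.\ $y$. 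Two observations drive this: first, at each fixed scale $|I_\alpha|=2^j$, translation by $n\cdot 2^j$ preserves the disjointness of same-scale dyadic intervals, so $\{I_\alpha^n : |I_\alpha|=2^j\}$ are pairwise disjoint and the usual almost-orthogonality argument for decaying bumps bounds the per-scale contribution by $O(1)$; second, only $O(\log_2\langle n\rangle)$ distinct scales contribute non-trivially at $y$, which should follow from the ``$I_T$ and friends'' structure from Section 7---each $I_\alpha^n$ lives inside a dyadic ancestor of $I_\alpha$ of size $\sim \langle n\rangle|I_\alpha|$, and the $\log_2\langle n\rangle$-step dyadic path from $I_\alpha$ to this ancestor controls how many scale-layers can be within reach of a fixed $y$ via the shifts $I_\alpha \mapsto I_\alpha^n$.

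The main obstacle I anticipate is the second observation: a priori the Calder\'on--Zygmund family $\{I_\alpha\}$ can span arbitrarily many scales, and one has to use the interaction between the shift $I_\alpha\mapsto I_\alpha^n$ and the disjointness of the $I_\alpha$'s to confine the effective ``contributing'' scales at $y$ to a logarithmic window. This is essentially the step carried out by Muscalu in the treatment of the non-standard symbol in the Calder\'on commutator \cite{M1}, and I would adapt that argument here. Once the pointwise bound on $\sum_\alpha \tilde{\chi}_{I_\alpha^n}^M(y)$ is established, combining it with the $L^\infty$ estimate and applying Marcinkiewicz interpolation finishes the proof of the lemma.
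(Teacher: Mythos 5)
Your overall architecture (an $L^\infty$ bound with constant $O(1)$, a weak $(1,1)$ bound with constant $O(\log_2\langle n \rangle)$, and interpolation) matches the paper's, and both the $L^\infty$ endpoint and the Calder\'on--Zygmund selection itself are fine. The gap is the step you flag as the crux: the pointwise bound $\sum_\alpha \tilde{\chi}_{I_\alpha^n}^M(y) \lesssim \log_2\langle n\rangle$ is false. Your per-scale observation is correct, but the claim that only $O(\log_2\langle n\rangle)$ scales contribute at a fixed $y$ fails. For $n\ge 3$ the windows $\bigcup\{I : |I|=2^j,\ I \text{ dyadic},\ y\in I^n\}\subseteq (y-(n+1)2^j,\,y-(n-1)2^j]$ are pairwise disjoint as $j$ varies, so there exist pairwise disjoint dyadic intervals, one at \emph{every} scale, whose $n$-shifts all contain $y$; disjointness imposes no logarithmic cap on the number of scales. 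Maximality does not rescue the claim either: taking $y=0$, $I_k=[-n2^{j_k},-(n-1)2^{j_k}]$ (so $I_k^n=[0,2^{j_k}]$) with widely separated scales, and $f$ a sum of masses $\approx \lambda 2^{j_k}$ placed at points $p_k\in[0,2^{j_k}]$ chosen to avoid suitable neighbourhoods of the shifted ancestors $J^n$ (these all lie in $[\tfrac{n-1}{2}|J|,\,(n+1)|J|]$, hence meet $[0,2^{j_k}]$ only for $|J|\lesssim 2^{j_k}/n$, so the forbidden set has measure $o(2^{j_k})$ provided the number of scales is $\lesssim n^{1-\varepsilon}$), one obtains on the order of $n^{1-\varepsilon}$ maximal intervals whose shifts all contain $0$. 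The dyadic-ancestor heuristic you invoke controls where $I_\alpha^n$ sits relative to $I_\alpha$, not how many scales can reach a fixed $y$.

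The paper's proof sidesteps this by selecting maximal intervals on the \emph{mass} side rather than the level-set side. After reducing the smooth cutoff to the sharp average $\frac{1}{|I|}\int_{I^n}|f|$ (by expanding $\tilde{\chi}_{I^n}^M$ over translates $I^{n+\#}$ with weights $\langle\#\rangle^{-100}$), it selects the maximal dyadic intervals $I^n$ on which the average of $|f|$ exceeds $\lambda$ --- these are disjoint and their union is $\{Mf>\lambda\}$ --- and then covers $\{\tilde M^n f>\lambda\}$ by the ``friends'' of each selected $J=I^n$: the union of all dyadic $K$ with $K^n\subseteq J$ is contained in $O(\log_2\langle n\rangle)$ intervals of length $|J|$, since the translates $J-n2^j$ with $2^j\le |J|/n$ all fall into two fixed intervals of that length. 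This gives the level-set comparison $|\{\tilde M^n f>\lambda\}|\lesssim \log_2\langle n\rangle\,|\{Mf>\lambda\}|$ and hence the weak $(1,1)$ bound. To repair your route you would need the weighted overlap estimate $\int |f|\sum_\alpha\tilde{\chi}_{I_\alpha^n}^M\lesssim \log_2\langle n\rangle\|f\|_1$, which requires tracking where the mass of $f$ sits relative to the $I_\alpha^n$ --- essentially the paper's argument in disguise --- so the cleaner fix is to switch to the mass-side selection outright.
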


\noindent Lemma \ref{shifted_max} will be proven in section 11.

\begin{lemma}\label{energy_est}
Let $\vec{\mathbf{P}}$ be a finite collection of quadtiles, $j \in \lbrace 2,3,4 \rbrace$ and $f\in L^2(\mathbb{R})$. Then

\begin{equation*}
\text{energy}((\langle f, \phi_{P_j^{n_j}, j} \rangle)_{\vec{P}\in\vec{\mathbf{P}}}) \lesssim (\log_2(\langle n_j \rangle))^2 \|f\|^2
\end{equation*}
\end{lemma}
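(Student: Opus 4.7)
The plan is to adapt the standard $TT^{\ast}$-duality argument from \cite{MTT2} for energy estimates to account for the time-translated wave packets $\phi_{P_j^{n_j},j}$. The key subtlety is that strong $j$-disjointness in the definition of energy is imposed on the untranslated tiles $P_j$, whereas the wave packets are adapted to $P_j^{n_j}$; this mismatch will force us to route each appeal to spatial decay through the shifted maximal function $M^{n_j}$ of Lemma \ref{shifted_max}, paying one factor of $\log_2(\langle n_j\rangle)$ per passage.

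First, fix $n\in\mathbb{Z}$ and let $\mathbf{T}$ be a collection of strongly $j$-disjoint trees realizing, up to a constant, the supremum in the definition of $\text{energy}_j$. Squaring and summing the tree lower bound gives
\begin{equation*}
2^{2n}\sum_{T\in\mathbf{T}}|I_T| \;\lesssim\; \sum_{T\in\mathbf{T}}\sum_{\vec{P}\in T} |\langle f,\phi_{P_j^{n_j},j}\rangle|^2 \;=\; \langle f, \Phi\rangle,
\end{equation*}
where $\Phi := \sum_{T\in\mathbf{T}}\sum_{\vec{P}\in T}\overline{\langle f,\phi_{P_j^{n_j},j}\rangle}\,\phi_{P_j^{n_j},j}$. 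By Cauchy--Schwarz it then suffices to prove
\begin{equation*}
\|\Phi\|_{2} \;\lesssim\; (\log_2(\langle n_j\rangle))^{2}\cdot 2^{n}\Bigl(\sum_{T\in\mathbf{T}}|I_T|\Bigr)^{1/2};
\end{equation*}
dividing through yields the claim. (I read the $\|f\|^{2}$ in the statement as a typo for $\|f\|_{2}$, since energy scales linearly in $f$.)

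Next, expand $\|\Phi\|_2^2$ by $TT^{\ast}$ and split the resulting double sum into a diagonal piece ($T=T'$) and an off-diagonal piece ($T\neq T'$). The diagonal contribution is handled by the subtree upper bound in the definition of energy together with the $L^{2}$-normalization of the wave packets, producing a contribution of order $2^{2n}\sum_{T}|I_T|$. In the off-diagonal piece, pairs with $2\omega_{P_j}\cap 2\omega_{P_j'}=\emptyset$ vanish, since the wave packets have Fourier supports in $\tfrac{9}{10}\omega_{P_j}$ and $\tfrac{9}{10}\omega_{P_j'}$; all surviving pairs then satisfy $I_{\vec{P}'}\cap I_T=\emptyset$ (after swapping trees if needed) by strong $j$-disjointness. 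For such pairs the inner product $|\langle\phi_{P_j^{n_j},j},\phi_{P_j'^{n_j},j}\rangle|$ decays polynomially in the scale ratio and the spatial distance between the shifted intervals $I_{P_j^{n_j}}$ and $I_{P_j'^{n_j}}$. Summing these tails over $\vec{P}\in T$ in the region ``$I_T$ and friends'' from Section 8 and then over $T'$ brings in the shifted maximal function $M^{n_j}$ on both sides; by Lemma \ref{shifted_max}, each application contributes one factor of $\log_2(\langle n_j\rangle)$, giving the claimed $(\log_2(\langle n_j\rangle))^{2}$ loss in total.

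The main obstacle is precisely this off-diagonal step: the shift by $n_j|I_{\vec{P}}|$ can bring $I_{P_j^{n_j}}$ and $I_{P_j'^{n_j}}$ arbitrarily close even when $I_{\vec{P}}$ and $I_{\vec{P}'}$ are spatially disjoint, so one must carefully organize the sum using the friends set $Fr(n_j)$ (which has cardinality $\lesssim\log_2(\langle n_j\rangle)$) to guarantee only $(\log_2(\langle n_j\rangle))^{2}$ loss rather than a polynomial power of $\langle n_j\rangle$. Following Muscalu's treatment of the Calder\'on commutator \cite{M1}, exactly two applications of $M^{n_j}$ are needed to absorb the translations, and verifying that no additional polynomial factor creeps in is the delicate technical point of the proof.
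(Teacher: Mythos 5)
Your reduction is the same as the paper's: realize the supremum by a family $\mathbf{T}$ of strongly $j$-disjoint trees, square, dualize via $TT^{\ast}$, and split into a diagonal and an off-diagonal piece; the observation that $\|f\|^2$ should read $\|f\|_2$ is also correct and consistent with what the paper actually proves. The problem is that the off-diagonal piece --- which is the entire content of the lemma beyond the standard untranslated argument --- is not actually estimated. You assert that summing the tails ``brings in the shifted maximal function $M^{n_j}$ on both sides'' and that two applications of Lemma \ref{shifted_max} yield the $(\log_2\langle n_j\rangle)^2$ loss, but Lemma \ref{shifted_max} is an $L^p\to L^p$ (and $L^1\to L^{1,\infty}$) boundedness statement for a maximal operator, and you give no mechanism by which it would control the double sum $\sum_{\vec{P}\in T}\sum_{\vec{Q}}\bigl\lvert\langle \tilde{\chi}_{I_{P_j^{n_j}}},\tilde{\chi}_{I_{Q_j^{n_j}}}\rangle\bigr\rvert$. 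The paper does not use the maximal function here at all. You yourself flag that ``verifying that no additional polynomial factor creeps in is the delicate technical point of the proof''; that delicate point is exactly what is missing.

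What the paper does instead, after using the tree upper and lower bounds to reduce the off-diagonal term to showing $\sum_{\vec{P}\in T}\sum_{\vec{Q}}\lvert\langle \tilde{\chi}_{I_{P_j^{n_j}}},\tilde{\chi}_{I_{Q_j^{n_j}}}\rangle\rvert\lesssim(\log_2\langle n_j\rangle)^2\lvert I_T\rvert$, is purely combinatorial. First, for fixed $\vec{P}$ it runs a greedy selection on the admissible $\vec{Q}$ (those with $\omega_{P_j}\subseteq\omega_{Q_j}$, hence $I_{\vec{Q}}\cap I_T=\emptyset$ and $\lvert I_{\vec{Q}}\rvert\le\lvert I_{\vec{P}}\rvert$): pick a maximal shifted interval $I_{\tilde{Q}_j^{n_j}}$, group all $\vec{Q}$ whose shifted intervals meet it, and use that the underlying $I_{\vec{Q}}$ in such a group are disjoint and lie among the $O(\log_2\langle n_j\rangle)$ friends of $I_{\vec{\tilde{Q}}}$; this pays the first logarithm. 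Second, it splits the $\vec{P}\in T$ according to whether $4n_j\lvert I_{\vec{P}}\rvert\ge\lvert I_T\rvert$ or not: there are only $O(\log_2\langle n_j\rangle)$ large scales, each contributing $\lesssim\lvert I_T\rvert$ (the second logarithm), while at each small scale only the $O(n_j)$ intervals within $3n_j$ units of the endpoints of $I_T$ can interact after translation, and summing $n_j\lvert I_{\vec{P}}\rvert$ over the scales with $\lvert I_{\vec{P}}\rvert<\lvert I_T\rvert/(4n_j)$ telescopes to $\lesssim\lvert I_T\rvert$ with no extra power of $\langle n_j\rangle$. Without an argument of this kind (or a genuine substitute), your proof of the off-diagonal bound is incomplete.
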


\noindent Lemma \ref{energy_est} will be proven in section 13.

We can now prove theorem \ref{rest_weak_type}. 

\begin{proof}
Fix $E_1$, $E_2$, $E_3$, $E_4$, $\gamma_1$, $\gamma_2$ and $\gamma_3$ as in the hypothesis of theorem \ref{rest_weak_type}. The goal is to find $E_{4}' \subseteq E_4$ with $|E_{4}'| \sim |E_4|$ such that for every $|f_1| \lesssim 1_{E_1}$, $|f_2| \lesssim 1_{E_2}$, $|f_3| \lesssim 1_{E_3}$ one has

\begin{equation*}
\Bigl\lvert \Lambda_{\vec{\mathbf{P}}}(f_1,f_2,f_3, 1_{E_{4}'}) \Bigr\rvert \lesssim \left(\prod\limits_{j=1}^{3}|\log_2(\langle n_j \rangle )|^4 \right)|E_1|^{\gamma_1}|E_2|^{\gamma_2}|E_3|^{\gamma_3}|E_4|^{\gamma_4}
\end{equation*}

\noindent where we recall that $\gamma_4$ is defined by $\gamma_1 + \gamma_2 + \gamma_3 + \gamma_4 = 1$.

Using the dilation symmetry of $T_{\beta}$, which translates naturally to $\Lambda_{\vec{\mathbf{P}}}$, one can clearly assume wlog that $|E_4| = 1$. Define then the set $\Omega$ by

\begin{equation*}
\Omega := \bigcup\limits_{j=1}^{3}\left( \lbrace x: M^{n_j}\left(\frac{1_{E_j}}{|E_j|}\right)(x) > C\log_2(\langle n_j \rangle) \rbrace \right)
\end{equation*}

\noindent and observe that $|\Omega| \ll 1$ if $C$ is a large enough constant. Then set $E_{4}' := E_4 \setminus \Omega$ and notice that $|E_{4}'| \sim 1$ as desired.

Then for any $d\geq 1$ define the collection $\vec{\mathbf{P}}_d$ by

\begin{equation*}
\vec{\mathbf{P}}_d := \lbrace \vec{P}\in\vec{\mathbf{P}} : 2^{d-1} \leq \frac{dist(I_{\vec{P}},\Omega^c)}{|I_{\vec{P}}|} \leq 2^{d} \rbrace
\end{equation*}

\noindent and let $P_0$ be the collection of quadtiles which intersect $\Omega^c$. Clearly $\bigcup\limits_{d\geq 0}\vec{\mathbf{P}}_d = \vec{\mathbf{P}}$.

We can write

\begin{equation}\label{inner_form}
 \Lambda_{\vec{\mathbf{P}}}(f_1,f_2,f_3, 1_{E_{4}'}) = \sum\limits_{d=0}^{\infty}\int_{\mathbb{R}}T_{\vec{\mathbf{P}}_d}(f_1,f_2,f_3)(x)1_{E_{4}'}(x)dx
\end{equation}

Fix $d\geq 0$ and consider the inner quad linear form of \eqref{inner_form}. It can be estimated by proposition \ref{size_energy_est}. Using lemma \ref{size_est} and lemma \ref{shifted_max} we obtain

\begin{align*}
\text{size}((\langle f, \phi_{P_j^{n_j}, j} \rangle)_{\vec{P}\in\vec{\mathbf{P}}_d}) &\lesssim \log_2(\langle n_j \rangle) \sup\limits_{\vec{P}\in\vec{\mathbf{P}}_d}\frac{1}{|I_{\vec{P}}|}\int_{E}\tilde{\chi}_{I_{P_j^{n_j}}}^{M} \\
&\lesssim (\log_2(\langle n_j \rangle))^2\min(1,2^d|E_j|) \\
&\lesssim (\log_2(\langle n_j \rangle))^2 2^d |E_j|^{a_j}
\end{align*}

\noindent for any $0 < a_j < 1$, $j=1,2,3$.

Using lemma \ref{energy_est} we also obtain for $j=2,3$

\begin{equation*}
\text{energy}((\langle f, \phi_{P_j^{n_j}, j} \rangle)_{\vec{P}\in\vec{\mathbf{P}}}) \lesssim (\log_2(\langle n_j \rangle))^2 |E_j|^{1/2}.
\end{equation*}

Using lemmas \ref{size_est}, \ref{energy_est} and \ref{shifted_max} for the fourth position, using $n_4=0$, we note that since $|E_4|=1$ we obtain

\begin{equation*}
\text{size}((\langle f, \phi_{P_j^{n_j}, j} \rangle)_{\vec{P}\in\vec{\mathbf{P}}_d}) \lesssim 2^{-Md}
\end{equation*}

\noindent and

\begin{equation*}
\text{energy}((\langle f, \phi_{P_j^{n_j}, j} \rangle)_{\vec{P}\in\vec{\mathbf{P}}}) \lesssim 1.
\end{equation*}

Putting all this together then proposition \ref{size_energy_est} allows us to bound the corresponding quad linear form in \eqref{inner_form} for a fixed $d\geq 0$ by

\begin{multline*}
2^{-\# d}|E_1|^{a_1}(|E_2|^{a_2})^{\theta_2}(|E_2|^{1/2})^{1 - \theta_2}(|E_3|^{a_3})^{\theta_2}(|E_3|^{1/2})^{1 - \theta_3}\cdot 1 \\
= 2^{-\# d}|E_1|^{a_1}|E_2|^{a_2 \theta_2 + \frac{1}{2}(1-\theta_2)}|E_3|^{a_3\theta_3 + \frac{1}{2}(1-\theta_3)}
\end{multline*}

\noindent where $\#$ is a strictly positive integer. Then we can make $a_1$ arbitrarily close to $1$, $a_2 \theta_2 + \frac{1}{2}(1-\theta_2)$ arbitrarily close to $\frac{1}{2}$ by choosing $\theta_2$ close to $0$ and $a_3\theta_3 + \frac{1}{2}(1-\theta_3)$ arbitrarily close to $1$ by choosing $\theta_3$ close to $1$ and $a_3$ also close to $1$.
\end{proof}

\section{Estimates for the shifted dyadic maximal function}\label{ShiftedMax}

We will now recall the proof of lemma \ref{shifted_max} from \cite{M1}. We note, as Muscalu does in \cite{M1}, that the proof of this lemma was already known and can be found in \cite{S1} Chapter II.

\begin{proof}
Observe that it is sufficient to prove the estimates for the "sharp" shifted dyadic maximal function $\tilde{M}^{n}$ defined by

\begin{equation*}
\tilde{M}^{n}f(x) := \sup\limits_{x\in I}\frac{1}{|I|}\int_{I^n}|f(y)|dy
\end{equation*}

where the supremum is only taken over dyadic intervals.

To observe this, fix $x$ and $I$ so that $x\in I$. We can write

\begin{equation*}
\frac{1}{|I^n|}\int_{I^n}|f(y)|dy \lesssim \sum\limits_{\# \in \mathbb{Z}}\left[\frac{1}{|I^{n+\#}|}\int_{I^{n+\#}}|f(y)|dy \right]\frac{1}{\langle \# \rangle^{100}}.
\end{equation*}

\noindent Assuming the theorem holds for $\tilde{M}^{n}$ and using the above, one has

\begin{align*}
\|M^{n}f \|_{p} &\lesssim \sum\limits_{\# \in \mathbb{Z}}\frac{1}{\langle \# \rangle^{100}}\|\tilde{M}^{n+\#}f \|_{p}\\
&\lesssim \sum\limits_{\# \in \mathbb{Z}}\frac{1}{\langle \# \rangle^{100}}\log_2(\langle n+\# \rangle)\|f \|_{p} \\
&\lesssim \sum\limits_{\# \in \mathbb{Z}}\frac{1}{\langle \# \rangle^{100}}\log_2(\langle n \rangle \langle \# \rangle)\|f \|_{p}\\
& \lesssim \log_2(\langle n \rangle)\|f \|_{p}
\end{align*}

\noindent as desired. We then turn to proving the theorem for $\tilde{M}^n$.

Let $\lambda > 0$. We claim that the following inequality is true

\begin{equation}\label{11claim}
|\lbrace x: \tilde{M}^{n}f(x) > \lambda \rbrace| \lesssim \log_2(\langle n \rangle)|\lbrace x: Mf(x) > \lambda \rbrace|
\end{equation}

\noindent where $M$ is the classical Hardy-Littlewood maximal operator. Assuming \eqref{11claim} the theorem for $\tilde{M}^{n}$ follows from the Hardy-Littlewood theorem in the case $L^{1}(\mathbb{R}) \mapsto L^{1,\infty}(\mathbb{R})$. The case $L^{\infty}(\mathbb{R}) \mapsto L^{\infty}(\mathbb{R})$ is trivial. All the other estimates we obtain then by interpolating between those two cases.

To prove \eqref{11claim} denote by $\mathcal{I}_{n}^{\lambda}$ the collection of all dyadic and maximal, with respect to inclusion, intervals $I^n$, for which

\begin{equation*}
\frac{1}{|I^n|}\int_{I^n}|f(y)|dy > \lambda.
\end{equation*}

Observe they are all disjoint and in addition one has

\begin{equation*}
\bigcup\limits_{I^n \in \mathcal{I}_{n}^{\lambda}}I^n = \lbrace x: Mf(x) > \lambda \rbrace .
\end{equation*}

\noindent For every such selected, maximal, dyadic interval $I^n$, then it has at most $\log_2(\langle n \rangle)$ friends as in the tree case. More precisely then there are at most $\log_2(\langle n \rangle)$ disjoint dyadic intervals $I_{1}^{n}, \ldots ,I_{N}^{n}$ of the same length as $|I^n|$, so that the translate with $-n$ corresponding units of any subinterval of $I^n$ becomes a subinterval of one of these intervals. Now we claim

\begin{equation*}
\lbrace x: \tilde{M}^{n}f(x) > \lambda \rbrace \subseteq \bigcup\limits_{I^n \in \mathcal{I}_{n}^{\lambda}}(I_{1}^{n} \cup \ldots \cup I_{1}^{N}).
\end{equation*}

\noindent To prove this, pick $x^{*}$ such that $M^n f(x^{*}) > \lambda$. This implies that there exists a dyadic interval $J$ containing $x^{*}$ such that $\frac{1}{|J^n|}\int_{J^n}|f(y)|dy > \lambda$. Due to the previous construction, one can certainly find one selected maximal interval of the type $I^n$ such that $J^n \subseteq I^n$. This however means that $J$ itself will be a subset of one of $I_{1}^{n}, \ldots ,I_{N}^{n}$ which proves the claim.

One can now easily see that this claim and the disjointness of the maximal intervals $I^n$ along with the fact that $N \leq \log_2(\langle n \rangle)$ imply \eqref{11claim}.

\end{proof}

\section{Size estimates}

We will now prove lemma \ref{size_est}.

\begin{proof}
Fix $j\in\lbrace 1,2,3,4 \rbrace$, $n_j$, $E$ and $|f|\lesssim 1_{E}$ as in the lemma. Since $\vec{\mathbf{P}}$ is a finite set of tiles there exists a tree $\tilde{T}$  such that the supremum in the size is attained. If the tree is just one quadtile then the proof is trivial. Let's thus assume that $\tilde{T}$ is an $i$-tree for some $2\leq i \leq 4$ such that $j$ is a good index with respect to $i$, as in the definition of rank $(1,0)$.

\begin{multline}\label{11.1}
\text{size}((\langle f, \phi_{P_j^{n_j}, j} \rangle)_{\vec{P}\in\vec{\mathbf{P}}}) = (\frac{1}{|I_{\tilde{T}}|}\sum\limits_{\vec{P}\in\tilde{T}}|\langle f, \phi_{P_j^{n_j}, j} \rangle|^2)^{1/2} \\
\leq \sum\limits_{i\in Fr(n_j)}(\frac{1}{|I_{\tilde{T}}|}\sum\limits_{\substack{\vec{P}\in\tilde{T}\\ I_{\vec{P}}\subseteq I_{\tilde{T}}^{i}}}|\langle f, \phi_{P_j^{n_j}, j} \rangle|^2)^{1/2}
\end{multline}

\noindent Now for each $i\in Fr(n_j)$ take $\vec{P}\in\tilde{T}$ such that $I_{\vec{P}}\subseteq I_{\tilde{T}}^{i}$ and pick from that collection of tiles trees that are maximal with regards to inclusion and such that they contain their top. Call that collection $\vec{T}_i$ for each $i\in Fr(n_j)$. Then we can bound \eqref{11.1} with

\begin{equation*}
 \sum\limits_{i\in Fr(n_j)}\sum\limits_{T\in \vec{T}_i} (\frac{1}{|I_{\tilde{T}}|}\sum\limits_{\vec{P}\in T}|\langle f, \phi_{P_j, j} \rangle|^2)^{1/2}
\end{equation*}

\noindent Note that the trees in $\vec{T}_i$ are disjoint and in particular

\begin{equation*}
\sum\limits_{T\in\vec{T}_i}|I_T| \leq |I_{\tilde{T}}|.
\end{equation*}

\noindent Thus for a fixed $i\in Fr(n_j)$ we have

\begin{align*}
\sum\limits_{T\in \vec{T}_i} (\frac{1}{|I_{\tilde{T}}|}\sum\limits_{\vec{P}\in T}|\langle f, \phi_{P_j, j} \rangle|^2)^{1/2} &\leq \left(\sup\limits_{T \in \vec{T}_i}\left(\frac{1}{|I_{T}|}\sum\limits_{\vec{P}\in T}|\langle f, \phi_{P_j, j} \rangle|^2 \right)^{1/2} \right)\frac{1}{|I_{\tilde{T}}|}\sum\limits_{T\in\vec{T}_i}|I_T| \\
&\leq \sup\limits_{T \in \vec{T}_i}\left(\frac{1}{|I_{T}|}\sum\limits_{\vec{P}\in T}|\langle f, \phi_{P_j, j} \rangle|^2 \right)^{1/2}
\end{align*}

Since $\vec{\mathbf{P}}$ is a finite set of tiles then for each friend there exists a tree $T$ which is an $i$-tree for some $i\neq j$, $2 \leq i \leq 4$, such that 

\begin{equation*}
\sup\limits_{T \in \vec{T}_i}\left(\frac{1}{|I_{T}|}\sum\limits_{\vec{P}\in T}|\langle f, \phi_{P_j, j} \rangle|^2 \right)^{1/2} \sim \frac{1}{|I_T|}\left\|\left(\sum\limits_{\vec{P}\in T}|\langle f, \phi_{P_j, j} \rangle|^2\frac{1_{I_{\vec{P}}}}{|I_{\vec{P}}|}\right)^{1/2}\right\|_{1,\infty}
\end{equation*}

\noindent Here we have also used the John-Nirenberg inequality in lemma \ref{john_nirenberg}. Clearly it is enough to prove that

\begin{equation*}
\left\|\left(\sum\limits_{\vec{P}\in T}|\langle f, \phi_{P_j, j} \rangle|^2\frac{1_{I_{\vec{P}}}}{|I_{\vec{P}}|}\right)^{1/2}\right\|_{1,\infty} \lesssim \int_{\mathbb{R}}1_{E_j}\tilde{\chi}_{I_T}^{M}
\end{equation*}

\noindent and use the fact that $|Fr(n_j)| \leq \log_2(\langle n_j \rangle)$.

Decompose the real line as a union of intervals

\begin{equation*}
\mathbb{R} = \bigcup\limits_{n\in\mathbb{Z}}I_{T}^n
\end{equation*}

\noindent where $|I_{T}^{n}| = |I_T|$ for every $n\in\mathbb{Z}$, $I_{T}^{0} = I_T$ and all $I_{T}^{n}$ are disjoin except for the endpoints. We think of $I_{T}^{n}$ as being n units of length $|I_T|$ to the right of $I_T$ if $n>0$ and to the left if $n<0$. Then split $f$ as

\begin{equation*}
f = f\cdot 1_{5I_T} + f \cdot 1_{(5I_T)^c}.
\end{equation*}

\noindent Since the expression $\left(\sum\limits_{\vec{P}\in T}|\langle f, \phi_{P_j, j} \rangle|^2\frac{1_{I_{\vec{P}}}}{|I_{\vec{P}}|}\right)^{1/2}$ is a square function, it is bounded from $L^1$ into $L^{1,\infty}$ and as a consequence

\begin{equation*}
\left\|\left(\sum\limits_{\vec{P}\in T}|\langle f\cdot 1_{5I_T}, \phi_{P_j, j} \rangle|^2\frac{1_{I_{\vec{P}}}}{|I_{\vec{P}}|}\right)^{1/2}\right\|_{1,\infty} \lesssim \|f\cdot 1_{5I_T}\|_1
\end{equation*}

\noindent which can be majorized by the expression in the right-hand side of the lemma.

We are left with estimating

\begin{equation*}
\left\|\left(\sum\limits_{\vec{P}\in T}|\langle f\cdot 1_{(5I_T)^c}, \phi_{P_j, j} \rangle|^2\frac{1_{I_{\vec{P}}}}{|I_{\vec{P}}|}\right)^{1/2}\right\|_{1,\infty}
\end{equation*}

\noindent which is clearly smaller than

\begin{equation*}
\sum\limits_{|n|\geq 3}\sum\limits_{\vec{P}\in T}\frac{\langle |f\cdot 1_{I_{T}^{n}}|,|\phi_{P_j, j}| \rangle}{|I_P|^{1/2}}|I_P| \lesssim \sum\limits_{|n|\geq 3}\sum\limits_{\vec{P}\in T}\langle |f|\cdot 1_{I_{T}^{n}},|\tilde{\chi}_{I_{\vec{P}}}^M| \rangle
\end{equation*}

\noindent for any big number $M>0$. In order to complete the proof it is enough to prove that

\begin{equation*}
\sum\limits_{\vec{P}\in T}\langle |f|\cdot 1_{I_{T}^{n}},|\tilde{\chi}_{I_{\vec{P}}}^M| \rangle \lesssim \frac{1}{\langle n \rangle^M}\int_{\mathbb{R}}1_{E_j}1_{I_{T}^{n}}
\end{equation*}

\noindent but this is an easy consequence of the fact that the sum on the left-hand side runs over $P$ for which $I_{P} \subseteq I_T$. This ends the proof of lemma \ref{size_est}.

\end{proof}

\section{Energy estimates}

We will now prove lemma \ref{energy_est}.

\begin{proof}
Fix $j\in\lbrace 1,2,3,4 \rbrace$ and $f\in L^{2}(\mathbb{R})$. Let also $n$ and $\mathbf{T}$ be as in definition of energy such that the supremum in the definition is attained. We want to show that

\begin{equation}\label{12.1}
2^n \left(\sum\limits_{T\in\mathbf{T}}|I_T| \right)^{1/2} \lesssim \|f\|_2
\end{equation}

\noindent If we square the left-hand side of \eqref{12.1} and use the properties of the trees in $\mathbf{T}$ we can write

\begin{multline*}
\left( 2^n \left(\sum\limits_{T\in\mathbf{T}}|I_T| \right)^{1/2} \right)^2 = 2^{2n}\sum\limits_{T\in\mathbf{T}}|I_T| \\
\lesssim 2^{2n} 2^{-2n}\sum\limits_{T\in\mathbf{T}}\left( \sum\limits_{\vec{P}\in T}|\langle f, \phi_{P_{j}^{n_j}, j} \rangle|^2 \right) = \sum\limits_{T\in\mathbf{T}}\left( \sum\limits_{\vec{P}\in T}|\langle f, \phi_{P_{j}^{n_j}, j} \rangle|^2 \right)
\end{multline*}

\noindent and this expression is supposed to be smaller than $\|f\|_2^2$. We can also write

\begin{multline*}
\sum\limits_{T\in\mathbf{T}}\sum\limits_{\vec{P}\in T}|\langle f, \phi_{P_{j}^{n_j}, j} \rangle|^2 = |\langle \sum\limits_{T\in\mathbf{T}}\sum\limits_{\vec{P}\in T}\langle f, \phi_{P_{j}^{n_j}, j} \rangle \phi_{P_{j}^{n_j}, j} ,f \rangle| \\
\lesssim \| f \|_2 \| \sum\limits_{T\in\mathbf{T}}\sum\limits_{\vec{P}\in T}\langle f, \phi_{P_{j}^{n_j}, j} \rangle \phi_{P_{j}^{n_j}, j} \|_2
\end{multline*}

\noindent so it is enough to prove that

\begin{equation}\label{11.2}
\| \sum\limits_{T\in\mathbf{T}}\sum\limits_{\vec{P}\in T}\langle f, \phi_{P_{j}^{n_j}, j} \rangle \phi_{P_{j}^{n_j}, j} \|_2 \lesssim \left( \sum\limits_{T\in\mathbf{T}}\sum\limits_{\vec{P}\in T}|\langle f, \phi_{P_{j}^{n_j}, j} \rangle|^2 \right)^{1/2}
\end{equation}

\noindent The square of the left-hand side of \eqref{11.2} becomes smaller than

\begin{equation}\label{11.3}
\sum\limits_{T, T' \in\mathbf{T}}\sum\limits_{\substack{\vec{P}\in T \\ \vec{Q} \in T'}}|\langle f, \phi_{P_{j}^{n_j}, j} \rangle||\langle f, \phi_{Q_{j}^{n_j}, j} \rangle||\langle \phi_{P_{j}^{n_j}, j}, \phi_{Q_{j}^{n_j}, j} \rangle| := \text{ I } + \text{ II }
\end{equation}

\noindent where I contains the part where $T\neq T'$ while II contains the $T=T'$ part.

We first estimate I. Observe that if $\vec{P}\in T$ and $\vec{Q}\in T'$ then, in order for $\langle \phi_{P_{j}^{n_j}, j}, \phi_{Q_{j}^{n_j}, j} \rangle$ to be non-zero, we must have $\omega_{P_j} \cap \omega_{Q_j} \neq \emptyset$ and so we either have $\omega_{P_j} \subseteq \omega_{Q_j}$ or $\omega_{Q_j} \subseteq \omega_{P_j}$. Because of the symmetry we can assume that we always have $\omega_{P_j} \subseteq \omega_{Q_j}$. Then, since $T$ and $T'$ are strictly disjoint, this means that $I_{\vec{Q}} \cap I_{T} = \emptyset$ for any such a $\vec{Q}$.

Fix now $T$, $T'$, $\vec{P} \in T$ and $\vec{Q} \in T'$ so that $\omega_{P_j} \subseteq \omega_{Q_j}$. Using the properties of the trees $T\in\mathbf{T}$, we can write

\begin{equation*}
\frac{1}{|I_{\vec{P}}|^{1/2}}|\langle f, \phi_{P_{j}^{n_j}, j} \rangle| \lesssim 2^n \lesssim \frac{1}{|I_T|^{1/2}}\left(\sum\limits_{\vec{\tilde{P}}}|\langle f, \phi_{\tilde{P}_{j}^{n_j}, j} \rangle|^2 \right)^{1/2}
\end{equation*}

\noindent from which we can deduce that

\begin{equation}\label{11.4}
|\langle f, \phi_{P_{j}^{n_j}, j} \rangle| \lesssim \frac{|I_{\vec{P}}|^{1/2}}{|I_T|^{1/2}}\left(\sum\limits_{\vec{\tilde{P}}}|\langle f, \phi_{\tilde{P}_{j}^{n_j}, j} \rangle|^2 \right)^{1/2}.
\end{equation}

\noindent Similarly we have

\begin{equation}\label{11.5}
|\langle f, \phi_{Q_{j}^{n_j}, j} \rangle| \lesssim \frac{|I_{\vec{Q}}|^{1/2}}{|I_T|^{1/2}}\left(\sum\limits_{\vec{\tilde{P}}}|\langle f, \phi_{\tilde{P}_{j}^{n_j}, j} \rangle|^2 \right)^{1/2}.
\end{equation}

\noindent Using \eqref{11.4} and \eqref{11.5} we can bound I in \eqref{11.3} with

\begin{multline}\label{11.6}
\sum\limits_{T, T' \in\mathbf{T}}\sum\limits_{\substack{\vec{P}\in T \\ \vec{Q} \in T' \\ \omega_{P_j}\subseteq \omega_{Q_j}}} \left[ \frac{|I_{\vec{P}}|^{1/2}}{|I_T|^{1/2}}\left(\sum\limits_{\vec{\tilde{P}}}|\langle f, \phi_{\tilde{P}_{j}^{n_j}, j} \rangle|^2 \right)^{1/2}\right] \left[\frac{|I_{\vec{Q}}|^{1/2}}{|I_T|^{1/2}}\left(\sum\limits_{\vec{\tilde{P}}}|\langle f, \phi_{\tilde{P}_{j}^{n_j}, j} \rangle|^2 \right)^{1/2} \right]\\
|\langle \phi_{P_{j}^{n_j}, j}, \phi_{Q_{j}^{n_j}, j} \rangle| \\
= \sum\limits_{T\in\mathbf{T}}\left( \sum\limits_{\vec{\tilde{P}}\in T}|\langle f, \phi_{\tilde{P}_{j}^{n_j}, j} \rangle|^2 \right) \sum\limits_{\vec{P}\in T}\sum\limits_{\substack{T' \in \mathbf{T}\\ T'\neq T}}\sum\limits_{\substack{\vec{Q}\in T' \\ \omega_{P_j}\subseteq\omega_{Q_j}}}\frac{1}{|I_T|}|I_{\vec{P}}|^{1/2}|I_{\vec{Q}}|^{1/2}|\langle \phi_{P_{j}^{n_j}, j}, \phi_{Q_{j}^{n_j}, j} \rangle| \\
\lesssim \sum\limits_{T\in\mathbf{T}}\left( \sum\limits_{\vec{\tilde{P}}\in T}|\langle f, \phi_{\tilde{P}_{j}^{n_j}, j} \rangle|^2 \right) \sum\limits_{\vec{P}\in T}\sum\limits_{\substack{T' \in \mathbf{T}\\ T'\neq T}}\sum\limits_{\substack{\vec{Q}\in T' \\ \omega_{P_j}\subseteq\omega_{Q_j}}}\frac{1}{|I_T|}|\langle \tilde{\chi}_{I_{P_{j}^{n_j}}}, \tilde{\chi}_{I_{Q_{j}^{n_j}}} \rangle|
\end{multline}

\noindent Fix $T$ and look at the corresponding inner sum in \eqref{11.6}.

\begin{equation}\label{11.7}
\sum\limits_{\vec{P}\in T}\sum\limits_{\substack{T' \in \mathbf{T}\\ T'\neq T}}\sum\limits_{\substack{\vec{Q}\in T' \\ \omega_{P_j}\subseteq\omega_{Q_j}}}\frac{1}{|I_T|}|\langle \tilde{\chi}_{I_{P_{j}^{n_j}}}, \tilde{\chi}_{I_{Q_{j}^{n_j}}} \rangle|
\end{equation}

\noindent It is clearly enough to show that this expression is $O((\log_2(\langle n_j \rangle))^2 |I_T|)$.

Fix $\vec{P}\in T$ and recall

\begin{equation*}
|\langle \tilde{\chi}_{I_{P_{j}^{n_j}}}, \tilde{\chi}_{I_{Q_{j}^{n_j}}} \rangle| \lesssim \left(1 + \frac{\text{dist}(I_{P_{j}^{n_j}},I_{Q_{j}^{n_j}})}{|I_{\vec{P}}|} \right)^{-M}|I_{\vec{Q}}|.
\end{equation*}

Set $\vec{\mathbf{Q}}_{\vec{P}} = \lbrace \vec{Q}\in T' : T'\in\mathbf{T}, T'\neq T, \omega_{P_{j}} \subseteq \omega_{Q_{j}} \rbrace$. Pick $\vec{\tilde{Q}}$ from $\vec{\mathbf{Q}}_{\vec{P}}$ such that $I_{\tilde{Q}_{j}^{n_j}}$ is maximal with respect to inclusion and place all $\vec{\tilde{\tilde{Q}}}\in \vec{\mathbf{Q}}_{\vec{P}}$ such that $I_{\tilde{\tilde{Q}}_{j}^{n_j}}\cap I_{\tilde{Q}_{j}^{n_j}} \neq \emptyset$ and $\vec{\tilde{\tilde{Q}}} \neq \vec{\tilde{Q}}$ into $S_{\vec{\tilde{Q}}}$. Then observe that

\begin{align*}
\sum\limits_{\vec{Q}\in S_{\vec{\tilde{Q}}}\cup\lbrace \tilde{Q}\rbrace} |\langle \tilde{\chi}_{I_{P_{j}^{n_j}}}, \tilde{\chi}_{I_{Q_{j}^{n_j}}} \rangle| &\lesssim \sum\limits_{\vec{Q}\in S_{\vec{\tilde{Q}}}\cup\lbrace \tilde{Q}\rbrace}\left(1 + \frac{\text{dist}(I_{P_{j}^{n_j}},I_{Q_{j}^{n_j}})}{|I_{\vec{P}}|} \right)^{-M}|I_{\vec{Q}}| \\
&\lesssim \left(1 + \frac{\text{dist}(I_{P_{j}^{n_j}},I_{\tilde{Q}_{j}^{n_j}})}{|I_{\vec{P}}|} \right)^{-M}\sum\limits_{\vec{Q}\in S_{\vec{\tilde{Q}}}\cup\lbrace \tilde{Q}\rbrace}|I_{\vec{Q}}|.
\end{align*}

\noindent Here we use the fact that $|I_{\vec{P}}|>|I_{\vec{Q}}|$ for all $\vec{Q}\in\vec{\mathbf{Q}}_{\vec{P}}$. Now note that the $I_{\vec{Q}}$ for all $\vec{Q}\in S_{\vec{\tilde{Q}}}$ are disjoint and they can only come from the friends of $I_{\vec{\tilde{Q}}}$ so

\begin{equation*}
\sum\limits_{\vec{Q}\in S_{\vec{\tilde{Q}}}\cup\lbrace \tilde{Q}\rbrace}|I_{\vec{Q}}| \lesssim \log_{2}(\langle n_j \rangle)|I_{\vec{\tilde{Q}}}|
\end{equation*}

\noindent Now place $\vec{\tilde{Q}}$ into $\vec{\mathbf{Q}}_{\vec{P}}^{*}$ and throw away $S_{\vec{\tilde{Q}}}\cup\vec{\tilde{Q}}$ from $\vec{\mathbf{Q}}_{\vec{P}}$ and iterate the selection process. Since $\mathbf{P}$ is finite then our selection process will take finitely many steps. We can bound \eqref{11.7} from above with

\begin{equation}\label{11.8}
\sum\limits_{\vec{P}\in T}\sum\limits_{\vec{Q}\in \vec{\mathbf{Q}}_{\vec{P}}^{*}}\log_{2}(\langle n_j \rangle)\left(1 + \frac{\text{dist}(I_{P_{j}^{n_j}},I_{Q_{j}^{n_j}})}{|I_{\vec{P}}|} \right)|I_{\vec{Q}}|
\end{equation}

\noindent where all the $I_{\vec{Q}}$ for $\vec{Q} \in \vec{\mathbf{Q}}_{\vec{P}}^{*}$ are disjoint.

Now split \eqref{11.8} in the following way

\begin{multline*}
\log_{2}(\langle n_j \rangle)\sum\limits_{\substack{\vec{P}\in T \\ 4n_j|I_{\vec{P}}| \geq |I_T|}}\sum\limits_{\vec{Q}\in \vec{\mathbf{Q}}_{\vec{P}}^{*}}\left(1 + \frac{\text{dist}(I_{P_{j}^{n_j}},I_{Q_{j}^{n_j}})}{|I_{\vec{P}}|} \right)|I_{\vec{Q}}| \\
+ \log_{2}(\langle n_j \rangle)\sum\limits_{\substack{\vec{P}\in T \\ 4n_j|I_{\vec{P}}| < |I_T|}}\sum\limits_{\vec{Q}\in \vec{\mathbf{Q}}_{\vec{P}}^{*}}\left(1 + \frac{\text{dist}(I_{P_{j}^{n_j}},I_{Q_{j}^{n_j}})}{|I_{\vec{P}}|} \right)|I_{\vec{Q}}|.
\end{multline*}

Pick all $\vec{P}\in T$ with $|I_{\vec{P}}|$ of the same length such that $4n_j|I_{\vec{P}}| \geq |I_T|$. Then for a fixed $\vec{P}$ we can estimate

\begin{equation*}
\sum\limits_{\vec{Q}\in \vec{\mathbf{Q}}_{\vec{P}}^{*}}\left(1 + \frac{\text{dist}(I_{P_{j}^{n_j}},I_{Q_{j}^{n_j}})}{|I_{\vec{P}}|} \right)|I_{\vec{Q}}| \lesssim |I_{\vec{P}}|
\end{equation*}

\noindent and since the $I_{\vec{P}}$ are all disjoint for $\vec{P}\in T$ of the same scale then when we add up $|I_{\vec{P}}|$ for all of them we get something less than $|I_T|$. Now note there are at most $O(\log_2(\langle n_j \rangle))$ scales of $\vec{P}$ such that $4n|I_{\vec{P}}| = 2^{\log_2(4n_j)}|I_{\vec{P}}| > |I_T|$ and thus

\begin{equation*}
\log_{2}(\langle n_j \rangle)\sum\limits_{\substack{\vec{P}\in T \\ 4n_j|I_{\vec{P}}| \geq |I_T|}}\sum\limits_{\vec{Q}\in \vec{\mathbf{Q}}_{\vec{P}}^{*}}\left(1 + \frac{\text{dist}(I_{P_{j}^{n_j}},I_{Q_{j}^{n_j}})}{|I_{\vec{P}}|} \right)|I_{\vec{Q}}| \lesssim (\log_{2}(\langle n_j \rangle))^2|I_T|.
\end{equation*}

Now look at $\vec{P}\in T$ with $4n_j|I_{\vec{P}}| < |I_T|$. Those $\vec{P}$, that are less than $3n_j$ units of length $|I_{\vec{P}}|$ away from the endpoints of $I_T$, might interact with $\vec{Q} \in \vec{\mathbf{Q}}_{\vec{P}}^{*}$ and for those we estimate

\begin{equation*}
\sum\limits_{\vec{Q}\in \vec{\mathbf{Q}}_{\vec{P}}^{*}}\left(1 + \frac{\text{dist}(I_{P_{j}^{n_j}},I_{Q_{j}^{n_j}})}{|I_{\vec{P}}|} \right)|I_{\vec{Q}}| \lesssim |I_{\vec{P}}|.
\end{equation*}

\noindent Note that for a given scale there are at most $6n_j$ of them. For those that are $l>3n_j$ units of length $|I_{\vec{P}}|$ away from the endpoints of $I_T$ then $I_{\vec{P}}\cap I_{\vec{Q}} = \emptyset$ for all $\vec{Q} \in \vec{\mathbf{Q}}_{\vec{P}}^{*}$. Thus we estimate

\begin{equation*}
\sum\limits_{\vec{Q}\in \vec{\mathbf{Q}}_{\vec{P}}^{*}}\left(1 + \frac{\text{dist}(I_{P_{j}^{n_j}},I_{Q_{j}^{n_j}})}{|I_{\vec{P}}|} \right)|I_{\vec{Q}}| \lesssim (1+(l-3n))^{-M} |I_{\vec{P}}|.
\end{equation*}

For a given such scale of $\vec{P}$, say $|I_{\vec{P}}|=2^k$, we get

\begin{align*}
&\log_{2}(\langle n_j \rangle)\sum\limits_{\substack{\vec{P}\in T \\ |I_{\vec{P}}|=2^k}}\sum\limits_{\vec{Q}\in \vec{\mathbf{Q}}_{\vec{P}}^{*}}\left(1 + \frac{\text{dist}(I_{P_{j}^{n_j}},I_{Q_{j}^{n_j}})}{|I_{\vec{P}}|} \right)|I_{\vec{Q}}| \\
&\hskip3em\lesssim (\log_{2}(\langle n_j \rangle))\left( 6n_j|I_{\vec{P}}| + |I_{\vec{P}}|\sum\limits_{l=3n+1}^{\infty}\frac{1}{(1+(l-3n_j))^{M}} \right) \\
&\hskip3em\lesssim \log_{2}(\langle n_j \rangle) (6n_j+1) |I_{\vec{P}}|
\end{align*}

\noindent Now if we sum up over all scales such that $|I_{\vec{P}}| < \frac{|I_T|}{4n_j}$ we get

\begin{align*}
\log_{2}(\langle n_j \rangle)\sum\limits_{\substack{\vec{P}\in T \\ 4n_j|I_{\vec{P}}| < |I_T|}}\sum\limits_{\vec{Q}\in \vec{\mathbf{Q}}_{\vec{P}}^{*}}\left(1 + \frac{\text{dist}(I_{P_{j}^{n_j}},I_{Q_{j}^{n_j}})}{|I_{\vec{P}}|} \right)|I_{\vec{Q}}| &\lesssim \log_{2}(\langle n_j \rangle) (6n_j+1)\frac{|I_T|}{4n_j} \\
&\lesssim \log_{2}(\langle n_j \rangle)|I_T|.
\end{align*}

We are now left with the diagonal term II from \eqref{11.3} where the sum runs over $T=T'$. If $\vec{P},\vec{Q}\in T$ and $\omega_{P_j}\cap\omega_{Q_j} \neq \emptyset$ then we must have $\omega_{P_j} = \omega_{Q_j}$. We can majorize II with

\begin{equation*}
\sum\limits_{T\in\mathbf{T}}\sum\limits_{\vec{P}\in T}|\langle f, \phi_{P_{j}^{n_j}, j} \rangle|^2 \frac{1}{|I_{\vec{P}}|}\left( \sum\limits_{\substack{\vec{Q}\in T\\ \omega_{P_j}= \omega_{Q_j}}} |\langle \tilde{\chi}_{I_{P_{j}^{n_j}}}, \tilde{\chi}_{I_{Q_{j}^{n_j}}} \rangle| \right)
\end{equation*}

\noindent and it is sufficient to show that

\begin{equation*}
\sum\limits_{\substack{\vec{Q}\in T\\ \omega_{P_j}= \omega_{Q_j}}} |\langle \tilde{\chi}_{I_{P_{j}^{n_j}}}, \tilde{\chi}_{I_{Q_{j}^{n_j}}} \rangle|
\end{equation*}

\noindent is $O(\log_{2}(\langle n_j \rangle)|I_{\vec{P}}|)$ but that follows immediately from the fact that all the $I_{\vec{Q}}$ for which $\omega_{P_j} = \omega_{Q_j}$ are disjoint.

This concludes the proof of lemma \ref{energy_est}.

\end{proof}

\section{Proof of proposition \ref{size_energy_est}}

We will now prove proposition \ref{size_energy_est}. Fix the collection $\vec{\mathbf{P}}$ of quadtiles and the functions $f_1$, $f_2$, $f_3$, $f_4$. As mentioned before then we assume that $\vec{\mathbf{P}}$ is sparse and of rank $(1,0)$ and assume it is with respect to $\lbrace \lbrace 2,3,4 \rbrace, \lbrace 1 \rbrace \rbrace$ without loss of generality.

Denote for simplicity

\begin{equation*}
S_j := \text{size}((\langle f, \phi_{P_j^{n_j}, j} \rangle)_{\vec{P}\in\vec{\mathbf{P}}})
\end{equation*}

\noindent for $j\in\lbrace 1,2,3,4 \rbrace$ and

\begin{equation*}
E_j := \text{energy}((\langle f, \phi_{P_j^{n_j}, j} \rangle)_{\vec{P}\in\vec{\mathbf{P}}})
\end{equation*}

\noindent for $j\in\lbrace 2,3,4 \rbrace$.

\begin{proposition}
Let $j\in \lbrace 2,3,4 \rbrace$ and $\vec{\mathbf{P}}' \subseteq \vec{\mathbf{P}}$, $n\in\mathbb{Z}$ so that

\begin{equation*}
\text{size}((\langle f, \phi_{P_j^{n_j}, j} \rangle)_{\vec{P}\in\vec{\mathbf{P}}'}) \leq 2^{-n}E_j .
\end{equation*}

\noindent Then one can decompose $\vec{\mathbf{P}}' = \vec{\mathbf{P}}'' \cup \vec{\mathbf{P}}'''$ such that

\begin{equation*}
\text{size}((\langle f, \phi_{P_j^{n_j}, j} \rangle)_{\vec{P}\in\vec{\mathbf{P}}''}) \leq 2^{-n-1}E_j
\end{equation*}

\noindent and $\vec{\mathbf{P}}'''$ can be written as a disjoint union of trees $T\in\mathbf{T}$ such that

\begin{equation*}
\sum\limits_{T\in\mathbf{T}}|I_T| \lesssim 2^{2n}
\end{equation*}

\end{proposition}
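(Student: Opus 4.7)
The plan is to run the standard iterative tree selection algorithm from Lacey--Thiele and Muscalu--Tao--Thiele, adapted to the rank $(1,0)$ setting described in Section \ref{rank}. I would set $\vec{\mathbf{P}}_{\mathrm{stock}} := \vec{\mathbf{P}}'$ and iterate as follows: while there exists an admissible tree $T \subseteq \vec{\mathbf{P}}_{\mathrm{stock}}$ (namely a single quadtile tree, or an $i$-tree for some $2 \leq i \leq 4$ for which $j$ is a good index with respect to $i$) such that
$$\Bigl(\frac{1}{|I_T|} \sum_{\vec{P} \in T} |\langle f, \phi_{P_j^{n_j}, j}\rangle|^2\Bigr)^{1/2} > 2^{-n-1} E_j,$$
select such a $T^{\ast}$ whose top $P_{T^{\ast},j}$ is extremal in frequency --- concretely, among all eligible tops I would first pick those whose $\omega_{P_{T^{\ast},j}}$ has the leftmost (lowest) center, and perform a symmetric pass for the rightmost (highest) center afterward. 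I then form the associated "maximal" $j$-tree $T^{\ast\ast} := \{\vec{P} \in \vec{\mathbf{P}}_{\mathrm{stock}} : P_j \leq P_{T^{\ast},j}\}$, move $T^{\ast\ast}$ from $\vec{\mathbf{P}}_{\mathrm{stock}}$ into $\vec{\mathbf{P}}'''$, record $T^{\ast}$ in a collection $\mathbf{T}$, and iterate. Since $\vec{\mathbf{P}}'$ is finite the process terminates, and the residual $\vec{\mathbf{P}}_{\mathrm{stock}}$ becomes $\vec{\mathbf{P}}''$, which satisfies the required size bound by construction.

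Next I would verify that $\mathbf{T}$, after splitting into the two extremal subfamilies, is a family of strongly $j$-disjoint trees in the sense of the energy definition. Distinctness of the $P_j$-components is automatic from the iterative removal. For the second clause of strong disjointness, suppose $\vec{P} \in T$ and $\vec{P}' \in T'$ with $T$ selected before $T'$ and $2\omega_{P_j} \cap 2\omega_{P_j'} \neq \emptyset$: the extremality of the chosen center then forces $\omega_{P_j'} \subsetneq \omega_{P_j}$, hence by the first bullet of rank $(1,0)$ one has $P_j' \leq P_{T,j}$, so $\vec{P}'$ would have been swept into $T^{\ast\ast}$ in the earlier step unless $I_{\vec{P}'} \cap I_T = \emptyset$. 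Moreover, since each recorded $T^{\ast}$ has density exceeding $2^{-n-1}E_j$, a standard argument (pruning any sub-tree whose density drops below $2^{-n-1}E_j$ and arguing that such a prune does not destroy the density at the top) lets me assume both the lower bound at the top and the upper bound at every sub-tree required in the energy definition, up to a harmless factor of two.

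The family $\mathbf{T}$ is then admissible in the supremum defining energy at level $2^{-n-1}E_j$, so
$$2^{-n-1} E_j \Bigl(\sum_{T \in \mathbf{T}} |I_T|\Bigr)^{1/2} \leq E_j,$$
which rearranges to $\sum_{T \in \mathbf{T}} |I_T| \lesssim 2^{2n}$. Finally, $\vec{\mathbf{P}}''' = \bigcup_{T^{\ast} \in \mathbf{T}} T^{\ast\ast}$ is a disjoint union of $j$-trees with the same top time intervals as the elements of $\mathbf{T}$, which gives the desired estimate on $\sum_{T \in \mathbf{T}} |I_T|$ for the tree decomposition of $\vec{\mathbf{P}}'''$.

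The main obstacle is establishing strong $j$-disjointness cleanly in the rank $(1,0)$ framework rather than the classical rank $1$ one. The asymmetric role of the paraproduct index (the absence of a useful $\leq$ or $\lesssim'$ relation between paraproduct tiles) and the fact that each of our quadtiles has \emph{several} good indices mean that the tree-selection step has to be done carefully: when a later tree $T'$ contains a tile whose $j$-frequency meets $2\omega_{P_j}$ of an earlier tree $T$, it is the combination of the frequency extremality, the separation afforded by the $10^7$ constant in $\lesssim'$, and the $5$ constant in the definition of $\leq$ (enlarged from $3$ in \cite{MTT2} precisely to absorb the extra translation caused by $P_1$) that forces $I_{\vec{P}'} \cap I_T = \emptyset$. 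Getting this bookkeeping exactly right --- and checking that the residual $\vec{\mathbf{P}}''$ really has size $\leq 2^{-n-1}E_j$ for \emph{all} admissible trees, not only for the class that was selected --- is the one place where the proof departs from the MTT template.
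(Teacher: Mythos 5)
Your proposal is correct and is precisely the standard Lacey--Thiele/Muscalu--Tao--Thiele tree-selection argument (frequency-extremal selection, sweeping out maximal trees, verifying strong $j$-disjointness, then invoking the energy bound), which is exactly what the paper's one-line proof invokes by saying the bilinear-Hilbert-transform argument carries over to the rank $(1,0)$ setting. You have in fact supplied more detail than the paper does, and you correctly identify the only point needing care, namely the strong $j$-disjointness bookkeeping in the presence of the paraproduct index.
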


\begin{proof}
Our rank $(1,0)$ collection of quadtiles has all the relevant features in common with the collection of tritiles in the bilinear Hilbert transform so the proof from there works here.
\end{proof}

By iterating the previous result we obtain the following corollary.

\begin{corollary}
Let $\vec{\mathbf{P}}$ be a finite collection. Then one can split $\vec{\mathbf{P}}$ as

\begin{equation*}
\vec{\mathbf{P}} = \bigcup\limits_{n\in\mathbb{Z}}\vec{\mathbf{P}}_n
\end{equation*}

\noindent where for each $n\in\mathbb{Z}$ and $j=2,3,4$ we have

\begin{equation*}
\text{size}((\langle f, \phi_{P_j^{n_j}, j} \rangle)_{\vec{P}\in\vec{\mathbf{P}}_n}) \leq \min( 2^{-n}E_j, S_j ).
\end{equation*}

\noindent Also one can cover $\vec{\mathbf{P}}_n$ by a collection of trees $T\in\mathbf{T}_n$ for which

\begin{equation*}
\sum\limits_{T\in\mathbf{T}_n}|I_T| \lesssim 2^{2n}.
\end{equation*}

\end{corollary}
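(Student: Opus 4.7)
The plan is to iterate the preceding proposition once for each index $j\in\{2,3,4\}$ and then merge the three resulting decompositions by assigning each quadtile to the minimum of its three scale indices. Because the proposition already does the real analytic work---peeling off a sub-collection that can be covered by trees of total length $\lesssim 2^{2n}$ while halving the size of what remains---the remaining task reduces to bookkeeping.

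First, I fix $j\in\{2,3,4\}$ and choose a starting index $n_0(j)$ small enough (possibly very negative) that $2^{-n_0(j)}E_j\geq S_j$, so that the hypothesis of the proposition holds automatically at level $n_0(j)$. Iteratively applying the proposition to the residual collection produces a disjoint decomposition $\vec{\mathbf{P}}=\bigsqcup_{n\geq n_0(j)}\vec{\mathbf{A}}_n^{(j)}$ together with tree families $\mathbf{T}_n^{(j)}$ covering $\vec{\mathbf{A}}_n^{(j)}$ with $\sum_{T\in\mathbf{T}_n^{(j)}}|I_T|\lesssim 2^{2n}$. The iteration terminates after finitely many steps because $\vec{\mathbf{P}}$ is finite; I set $\vec{\mathbf{A}}_n^{(j)}=\emptyset$ and $\mathbf{T}_n^{(j)}=\emptyset$ for $n<n_0(j)$. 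By construction $\bigcup_{k\geq n}\vec{\mathbf{A}}_k^{(j)}$ is exactly the residual entering step $n$, so its $j$-size does not exceed $2^{-n}E_j$.

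To combine, for each $\vec{P}\in\vec{\mathbf{P}}$ let $n_j(\vec{P})$ be the unique integer with $\vec{P}\in\vec{\mathbf{A}}_{n_j(\vec{P})}^{(j)}$, put $n(\vec{P}):=\min_{j\in\{2,3,4\}}n_j(\vec{P})$, and define $\vec{\mathbf{P}}_n:=\{\vec{P}\in\vec{\mathbf{P}}:n(\vec{P})=n\}$. Since $n_j(\vec{P})\geq n$ for every $j$ whenever $\vec{P}\in\vec{\mathbf{P}}_n$, one has $\vec{\mathbf{P}}_n\subseteq\bigcup_{k\geq n}\vec{\mathbf{A}}_k^{(j)}$ for each $j$, and monotonicity of size in the underlying collection gives
$$\text{size}((\langle f,\phi_{P_j^{n_j},j}\rangle)_{\vec{P}\in\vec{\mathbf{P}}_n})\leq\min(2^{-n}E_j,S_j).$$
Conversely, the definition of $n(\vec{P})$ forces $n_{j^\ast}(\vec{P})=n$ for some $j^\ast\in\{2,3,4\}$, so $\vec{\mathbf{P}}_n\subseteq\vec{\mathbf{A}}_n^{(2)}\cup\vec{\mathbf{A}}_n^{(3)}\cup\vec{\mathbf{A}}_n^{(4)}$; taking $\mathbf{T}_n:=\mathbf{T}_n^{(2)}\cup\mathbf{T}_n^{(3)}\cup\mathbf{T}_n^{(4)}$ produces a tree cover of $\vec{\mathbf{P}}_n$ whose total interval length is controlled by $\sum_{j=2}^{4}\sum_{T\in\mathbf{T}_n^{(j)}}|I_T|\lesssim 2^{2n}$.

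I do not expect a genuine obstacle: the items to verify are the starting size condition (handled by choosing $n_0(j)$ with $2^{-n_0(j)}E_j\geq S_j$), termination (finiteness of $\vec{\mathbf{P}}$), the size bound (monotonicity in the collection), and the tree bound (a harmless factor of $3$ absorbed into $\lesssim$). The one conceptual point worth flagging is that $n(\vec{P})$ must be taken as the \emph{minimum}, not the maximum, of the three level indices, since it is $\bigcup_{k\geq n}\vec{\mathbf{A}}_k^{(j)}$---rather than $\bigcup_{k\leq n}\vec{\mathbf{A}}_k^{(j)}$---that enjoys a size bound of the form $2^{-n}E_j$.
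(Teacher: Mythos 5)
Your proposal is correct and is precisely the standard iteration-and-merge argument that the paper invokes when it says the corollary follows ``by iterating the previous result'': run the proposition separately for each $j\in\{2,3,4\}$ starting from a level where $2^{-n_0(j)}E_j\geq S_j$, then assign each quadtile to the minimum of its three selection indices, using monotonicity of size for the upper bound and the union of the three tree families (at the cost of a factor $3$) for the covering bound. Your remark that the minimum, not the maximum, is the right choice is exactly the point one must get right, and you got it right.
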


\begin{lemma}
Let $T$ be an $i$-tree, $i=2,3\text{ or }4$, in $\vec{\mathbf{P}}$ and $f_1$, $f_2$, $f_3$, $f_4$ fixed functions, then

\begin{equation*}
\sum\limits_{\vec{P}\in T}\frac{1}{|I_{\vec{P}}|}|\langle f_1 , \phi_{P_1^{n_1}, 1} \rangle| |\langle f_2 , \phi_{P_2^{n_2}, 2} \rangle| |\langle f_3 , \phi_{P_3^{n_3}, 3} \rangle| |\langle f_4 , \phi_{P_4^{n_4}, 4} \rangle| \leq |I_T| \prod\limits_{j=1}^{4}\text{size}((\langle f, \phi_{P_j^{n_j}, j} \rangle)_{\vec{P}\in T})
\end{equation*}

\end{lemma}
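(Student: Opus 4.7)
The strategy would be the standard ``single tree'' estimate from time-frequency analysis: pointwise-bound two of the four factors via the size inequality at one-quadtile subtrees of $T$, and apply Cauchy--Schwarz to the remaining two factors using the size inequality applied to $T$ itself. Write $S_j := \text{size}((\langle f_j, \phi_{P_j^{n_j}, j}\rangle)_{\vec{P}\in T})$ for brevity. Every singleton $\{\vec{P}\}\subseteq T$ is an admissible one-quadtile tree in the size definition, which immediately gives the universal pointwise bound
$$|\langle f_j, \phi_{P_j^{n_j}, j}\rangle| \leq S_j\,|I_{\vec{P}}|^{1/2}, \qquad j=1,2,3,4,\ \vec{P}\in T.$$
Next, since $\vec{\mathbf{P}}$ has rank $(1,0)$, the third clause of that definition produces at least two ``good indices'' $\tau_1, \tau_2 \in \{1,2,3,4\}\setminus\{i\}$ with respect to $i$, which means that $T$ itself qualifies as an admissible $i$-tree in the definitions of both $S_{\tau_1}$ and $S_{\tau_2}$, so
$$\sum_{\vec{P}\in T}\bigl|\langle f_{\tau_k},\phi_{P_{\tau_k}^{n_{\tau_k}},\tau_k}\rangle\bigr|^2\ \leq\ S_{\tau_k}^2\,|I_T|, \qquad k=1,2.$$

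With these two size facts in hand the computation is mechanical. Let $\{a,b\}=\{1,2,3,4\}\setminus\{\tau_1,\tau_2\}$ (so that $i\in\{a,b\}$); applying the pointwise bound in positions $a$ and $b$ absorbs the factor $|I_{\vec{P}}|^{-1}$ via $|I_{\vec{P}}|^{-1}|I_{\vec{P}}|^{1/2}|I_{\vec{P}}|^{1/2}=1$, and then Cauchy--Schwarz on the two remaining factors combined with the $\ell^2$ size bound on $T$ yields
$$\sum_{\vec{P}\in T}\frac{\prod_{j=1}^{4}\bigl|\langle f_j,\phi_{P_j^{n_j},j}\rangle\bigr|}{|I_{\vec{P}}|}\ \leq\ S_aS_b\prod_{k=1}^{2}\Bigl(\sum_{\vec{P}\in T}\bigl|\langle f_{\tau_k},\phi_{P_{\tau_k}^{n_{\tau_k}},\tau_k}\rangle\bigr|^2\Bigr)^{1/2}\ \leq\ S_aS_bS_{\tau_1}S_{\tau_2}\,|I_T|,$$
which is the desired estimate. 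The only delicate point is matching the ``good indices'' supplied by the rank $(1,0)$ hypothesis with those over which $T$ is admissible in the size definition --- this is precisely what the modified rank notion of Section \ref{rank} is designed to arrange. Since rank $(1,0)$ always provides at least two good indices and the argument needs exactly two, there is just enough flexibility to close the estimate; the special status of the paraproduct position $j=1$, which must be pointwise-bounded whenever it is not itself ``good'', is what forces the move from rank $1$ to rank $(1,0)$.
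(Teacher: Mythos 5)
Your proposal is correct and follows essentially the same route as the paper's own proof: Cauchy--Schwarz in the two good-index positions (where $T$ itself is an admissible tree in the size definition, giving the $\ell^2$ bound $S_{\tau_k}|I_T|^{1/2}$), and the singleton-tree pointwise bound $|\langle f_j,\phi_{P_j^{n_j},j}\rangle|\leq S_j|I_{\vec P}|^{1/2}$ in the remaining two positions to absorb the factor $|I_{\vec P}|^{-1}$. The paper simply instantiates this with $i=2$ and good indices $1,4$, which is your $\tau_1=1$, $\tau_2=4$, $\{a,b\}=\{2,3\}$.
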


\begin{proof}
Say $T$ is a 2-tree and assume without loss of generality that $1$ and $4$ are good indices with respect to the index $2$. This is for example the case for our particular operator when we are in the case \eqref{case1} as discussed in section \ref{rank}. We can bound the left-hand side by

\begin{equation*}
\left(\sum\limits_{\vec{P}\in T} |\langle f_1 , \phi_{P_1^{n_1}, 1} \rangle|  \right)^{1/2}\left(\sup\limits_{\vec{P}\in T}\frac{|\langle f_2 , \phi_{P_2^{n_2}, 2} \rangle|}{|I_{\vec{P}}|^{1/2}}\right)\left(\sup\limits_{\vec{P}\in T}\frac{|\langle f_3 , \phi_{P_2^{n_3}, 3} \rangle|}{|I_{\vec{P}}|^{1/2}}\right)\left(\sum\limits_{\vec{P}\in T} |\langle f_4 , \phi_{P_4^{n_4}, 4} \rangle|  \right)^{1/2}
\end{equation*}

\noindent Since $1$ and $4$ are good indices with respect to $2$ we clearly have for $j=1,4$

\begin{equation*}
\left(\sum\limits_{\vec{P}\in T} |\langle f_j , \phi_{P_j^{n_j}, j} \rangle|  \right)^{1/2} \leq |I_T|^{1/2} \text{size}((\langle f, \phi_{P_j^{n_j}, j} \rangle)_{\vec{P}\in T}).
\end{equation*}

\noindent Since trees that consist of a single quadtile are also used in the definition of size then we clearly also have for $j=2,3$

\begin{equation*}
\sup\limits_{\vec{P}\in T}\frac{|\langle f_j , \phi_{P_j^{n_j}, j} \rangle|}{|I_{\vec{P}}|^{1/2}} \leq \text{size}((\langle f, \phi_{P_j^{n_j}, j} \rangle)_{\vec{P}\in T}).
\end{equation*}

\noindent In a similar manner one can verify the lemma for all other possible trees.

\end{proof}

We now have the tools to complete the proof of proposition \ref{size_energy_est}.

\begin{proof}
Using the corollary and lemma above then the proof runs as in the bilinear Hilbert transform case.
\end{proof}

\section{The Water Wave Problem}

In the $2$-d water wave problem, Wu showed that if one starts with small initial data then classical solutions exist for a long time \cite{W1}. In a natural way she came across operators of the following type

$$ f \mapsto p.v. \int\limits_{\mathbb{R}} F\left( \frac{A(x)-A(y)}{x-y} \right) \frac{\Pi_{i=1}^{n}(B_i(x)-B_i(y))}{(x-y)^{n+1}}f(y)\ dy $$

\noindent  and had to obtain $L^p$ estimates for them.  For such operators $L^p$ estimates are known if $A', B_i' \in L^{\infty}(\mathbb{R})$ for $i=1,\ldots,n$ and $f\in L^{2}(\mathbb{R})$. The novelty in Wu's paper was that she faced $B_1' \in L^{2}(\mathbb{R})$, which indicated that the operator should be viewed as a multilinear operator.

It is clear that operators similar to Wu's appear in PDEs. Just as Calder\'{o}n commutators appear very naturally in many applications in PDEs and the bilinear Hilbert transform also appears in applications, such as the AKNS systems \cite{MTT3}, it is natural to anticipate that operators of a similar type as Wu faces, but with an average dropped, will appear. Thus it is of interest to obtain $L^p$ estimates for operators of the following type

$$ (A, b, f) \mapsto p.v. \int\limits_{\mathbb{R}} F\left( \frac{A(x+t)-A(x)}{t} \right) b(x+\beta t)f(x+t)\frac{1}{t}\ dt $$

\noindent where $F$ is an analytic function. The first step would be to obtain $L^p$ estimates for

$$ (A, b, f) \mapsto p.v. \int\limits_{\mathbb{R}} \left( \frac{A(x+t)-A(x)}{t} \right)^m \ b(x+\beta t)f(x+t)\frac{1}{t}\ dt  $$

\noindent with polynomial bounds in $m$. Theorem \ref{original_thm} is the first step in showing a wide range of $L^p$ estimates for such operators when $m=1$.


\begin{thebibliography}{99}

\bibitem{BDNTTV1}
Benyi, A., Demeter, C., Nahmod, A., Thiele, C., Torres, R. and Villarroya, P.,
\emph{Modulation invariant bilinear $T(1)$ theorem}, Annal. Math. \textbf{109} (2009), 279--352.

\bibitem{C}
Calder\'{o}n, A. P.,
\emph{Commutators of singular integral operators}, Proc. Nat. Acad. Sci. \textbf{53} (1965), 1092--1099.

\bibitem{CJ1}
Christ, M., Journe, J.-L.,
\emph{Polynomial growth estimates for multilinear singular integral operators}, Acta Math. \textbf{159} (1987), 51--80.

\bibitem{CM1}
Coifman, R. R. and Meyer, Y.,
\emph{On commutators of singular integrals and bilinear singular integrals}, Trans. Amer. Math. Soc. \textbf{212} (1975), 315--331.

\bibitem{CMM1}
Coifman, R. R., McIntosh, A. and Meyer, Y.,
\emph{L'integrale de Cauchy definit un operateur borne sur L2 pour les courbes Lipschitziennes}, Anal. Math. \textbf{116} (1982), 361--387.

\bibitem{F1}
Fefferman, C. L.,
\emph{Pointwise convergence of Fourier series}, Ann. Math. \textbf{98} (1973), 551--571.

\bibitem{GL1}
Grafakos, L. and Li, X.,
\emph{Uniform bounds for the bilinear Hilbert transforms I}, Ann. Math. \textbf{159} (2004), 889--933.

\bibitem{GT1}
Grafakos, L. and Torres, R. H.,
\emph{Multilinear Calder\'{o}n-Zygmund theory}, Adv. in Math. \textbf{165} (2002), 124--164.

\bibitem{LT1}
Lacey, M., Thiele, C.,
\emph{$L^p$ estimates on the bilinear Hilbert transform for $2<p<\infty$.}, Ann. Math. \textbf{146} (1997), pp. 693--724.

\bibitem{LT2}
Lacey, M., Thiele, C.,
\emph{On Calderon's conjecture.}, Ann. Math. \textbf{149} (1999), pp. 475--496.

\bibitem{M1}
Muscalu, C.,
\emph{Calder\'{o}n commutators and the Cauchy integral on Lipschitz curves revisited I. First commutator and generalizations}, arXiv:1201.3845v1.

\bibitem{MTT1}
Muscalu, C., Tao, T., Thiele, C.,
\emph{$L^p$ estimates for the biest I. The Walsh case}, Math. Ann. \textbf{329} (2004), 401--426.

\bibitem{MTT2}
Muscalu, C., Tao, T., Thiele, C.,
\emph{$L^p$ estimates for the biest II. The Fourier case}, Math. Ann. \textbf{329} (2004), 427--461.

\bibitem{MTT3}
Muscalu, C., Tao, T., Thiele, C.,
\emph{Multilinear operators associated to simplexes of arbitrary length}, arXiv:0712.2420v1.

\bibitem{MTT4}
Muscalu, C., Tao, T., Thiele, C.,
\emph{Multilinear operators given by singular symbols}, J. Amer. Math. Soc. \textbf{15} (2002), 469--496.

\bibitem{S1}
Stein, E.,
\emph{Harmonic analysis: real-variable methods, orthogonality and oscillatory integrals}, Princeton University Press, (1993).

\bibitem{T1}
Thiele, C.,
\emph{A uniform estimate}, Ann. Math. \textbf{157} (2002), 1--45.

\bibitem{W1}
Wu, S.,
\emph{Almost global wellposedness of the $2$-D full water wave problem}, Inventiones Mathematicae. \textbf{177} (2009), 45--135.


\end{thebibliography}
\end{document}